\numberwithin{equation}{section}
\newtheorem{definition}{Definition}[section]
\newtheorem{proposition}[definition]{Proposition}
\newtheorem{theorem}[definition]{Theorem}
\newtheorem{lemma}[definition]{Lemma}
\newtheorem{assumption}{Assumption}
\theoremstyle{definition}
\newtheorem{remark}[definition]{Remark}
\newcommand{\bE}{\mathbb{E}}
\newcommand{\bF}{\mathbb{F}}
\newcommand{\bG}{\mathbb{G}}
\newcommand{\bN}{\mathbb{N}}
\newcommand{\bP}{\mathbb{P}}
\newcommand{\bQ}{\mathbb{Q}}
\newcommand{\bR}{\mathbb{R}}
\newcommand{\cF}{\mathcal{F}}
\newcommand{\cG}{\mathcal{G}}
\newcommand{\cS}{\mathcal{S}}
\newcommand{\epsi}{\varepsilon}
\newcommand{\one}{\mathbbm{1}}
\newcommand{\comm}[1]{}
\DeclareMathOperator*{\plim}{\bP-lim}
\DeclareMathOperator*{\sign}{\mathrm{sgn}}
\begin{document}

\title{Scaling Limits of Processes with Fast Nonlinear Mean Reversion\footnote{We are grateful to two anonymous referees for extremely detailed and helpful comments that have greatly improved the final version of this manuscript.}}

\author{
Thomas Cay\'e\thanks{Dublin City University, School of Mathematical Sciences, Glasnevin, Dublin 9, Ireland, email \texttt{thomas.caye@dcu.ie}. Partly supported by the Swiss National Science Foundation (SNF) under grant 175133.}
\and
Martin Herdegen\thanks{University of Warwick, Department of Statistics, Coventry, CV4 7AL, UK, email
 \texttt{M.Herdegen@warwick.ac.uk}. Partly supported by the Swiss National Science Foundation (SNF) under grant 150101.}
\and
Johannes Muhle-Karbe\thanks{Imperial College London, Department of Mathematics, London, SW7 1NE, UK, email \texttt{j.muhle-karbe@imperial.ac.uk}. Partially supported by the CFM-Imperial Institute of Quantitative Finance. Parts of this paper were written while this author was visiting ETH Z\"urich; he is grateful to H.M.~Soner and the Forschungsinstitut f\"ur Mathematik for their hospitality.}
}

\maketitle

\begin{abstract}
We derive scaling limits for integral functionals of It\^o processes with fast nonlinear mean-reversion speeds. In these limits, the fast mean-reverting process is ``averaged out'' by integrating against its invariant measure. The convergence is uniformly in probability and, under mild integrability conditions, also in $\cS^p$. These results are a crucial building block for the analysis of portfolio choice models with small superlinear transaction costs, carried out in the companion paper of the present study \cite{caye.al.17}.
\end{abstract}

\medskip
\noindent\textbf{Mathematics Subject Classification (2010):} 60F25, 60H10.

\medskip
\noindent\textbf{Keywords:} processes with fast nonlinear mean reversion; scaling limits

\section{Introduction and Main Results}

\paragraph{Motivation}

Superlinear trading costs play an important role in financial engineering as reduced-form models for the adverse price impact generated by large trades, cf., e.g., \cite{almgren.chriss.01,almgren.03} as well as many more recent studies. In this context, optimal policies typically prescribe to track some ``target portfolio'' at a finite, absolutely-continuous rate \cite{garleanu.pedersen.13,garleanu.pedersen.16,bank.soner.voss.16,cai.al.15,cai.al.16,almgren.li.16,moreau.al.15,guasoni.weber.15a,guasoni.weber.15b}. If trading costs are quadratic, this trading speed is linear in the deviation from the target \cite{garleanu.pedersen.13,garleanu.pedersen.16,moreau.al.15,guasoni.weber.15a}, leading to Ornstein-Uhlenbeck dynamics of the deviation in the small-cost limit. The well-known properties of this process can in turn be used to analyze the asymptotic performance of the corresponding tracking portfolios \cite{cai.al.15,cai.al.16}.

However, empirical studies suggest that actual trading costs are superlinear but also subquadratic, corresponding to the ``square-root law'' for price impact advocated by many practitioners~\cite{lillo.al.03,almgren.al.05}. For such trading costs, optimal trading rates become nonlinear: compared to the quadratic case, trading slows down near the target, where costs are higher. Conversely, trading is sped up far away from the target, where costs are comparatively lower. In the limiting case where the trading costs become proportional, this leads to ``reflecting'' singular controls: no transactions at all inside some ``no-trade region'' and instantaneous reflection by trading at an ``infinite rate'' once its boundaries are breached \cite{davis.norman.90}. The controlled deviation from the target process in turn follows a reflected diffusion process. In the small-cost limit, its study boils down to the analysis of doubly-reflected Brownian motion. Whence, the asymptotic analysis of the corresponding singular tracking strategies can also be performed by appealing to well-known probabilitic results~ \cite{janecek.shreve.10,ahrens.15,cai.al.15,cai.al.16}. In addition to these law of large numbers type results, even a central limit theorem has been derived for the asymptotic performance in this context~\cite{cai.fukasawa.16}. 

However, such results are not available for the empirically most relevant transaction costs that fall between linear and quadratic. The corresponding controlled deviations then correspond to processes with \emph{nonlinear} mean-reversions speeds \cite{guasoni.weber.15b,caye.al.17}. In this paper, we determine the scaling limits of such processes in the regime where the mean-reversion speed becomes large. This is a main building block for the derivation of asymptotically optimal trading strategies with small superlinear trading costs in the companion paper of the present study \cite{caye.al.17}. There, the results from the present paper are used to compute both the expected transaction costs incurred by a given tracking strategy, and its average squared displacement from the target. Trading off these two terms in an optimal manner in turn leads to the asymptotically optimal performance.

Our scaling limits also contribute to the classical literature on ``averaging results'', where a ``fast variable''  is averaged out appropriately as it oscillates faster and faster. Results of this kind were first developed by \cite{stratonovich.63,stratonovich.67,khasminskii.66,papanicolaou.al.77}; textbook treatments can be found in \cite{ethier.kurtz.86,skorokhod.89}. Our probabilistic approach allows us to extend existing results of, e.g,~\cite{pardoux.veretennikov.01, pardoux.veretennikov.03} that require a Markovian or semimartingale structure on the drift and diffusion coefficients of the SDEs to general unbounded drift and diffusion coefficients.

\paragraph{Setting} Let $(\Omega,\cF, (\cF_t)_{t \in [0,T]}, \bP)$ be a filtered probability space satisfying the usual conditions. For a (small) parameter $\epsi > 0$, we consider the following family of mean-reverting stochastic differential equations (SDEs):
\begin{equation}\label{eq:SDE}
dX^\epsi_t= \left(b_t-\frac{c_tL_t}{\epsi} g\left(\frac{M_t X^{\epsi}_t}{\epsi}\right)\right) dt + \sqrt{c_t} dW_t, \quad X^\epsi_0=x^\epsi_0, \quad t\in [0,T].
\end{equation}
Here, $W$ is a standard Brownian motion, the processes $b, c, L, M$ are adapted and continuous and $c, L, M$ are positive. 
The function $g$ describes the nonlinear nature of the mean reversion. It is locally Lipschitz, odd, non-decreasing and nonnegative on $\bR_+$ (so that $X^\epsi$ is indeed always steered back towards zero), and of superlinear polynomial growth at infinity:\footnote{In sufficiently regular Markovian settings, weaker growth conditions suffice to ensure the recurrence of the diffusion $X^\epsi$, cf.~\cite{pardoux.veretennikov.01}). Weaker versions of our results under such conditions are discussed in Remark~\ref{rem:corollary.Vere}.}
\begin{equation}
\label{eq:growth.g}
\liminf_{x \to \infty} \frac{g(x)}{x^q} >0, \quad \mbox{for some $q\geqslant 1$}.
\end{equation}
(Larger values of $q$ correspond to faster mean-reversion for large deviations, which allow to weaken the integrability requirements that need to be imposed on the other primitives of the model in Assumption \ref{ass.integrability}.) The antiderivative of $g$ is denoted by $G(x)=\int_{0}^{x}g(y)dy$. The processes $b, c, L, M$ and the function $g$ are all independent of the scaling parameter $\epsi$.  In contrast, the initial value $x^\epsi_0 \in \bR$ of $X^\epsi$ may depend on $\epsi$ as long as $\limsup_{\epsi \to 0} |x^\epsi_0|/\epsi < \infty$.

\paragraph{Interpretation} In the context of portfolio optimization with small nonlinear trading costs~\cite{caye.al.17}, $X^\varepsilon$ corresponds to the deviation of the frictional portfolio process from its frictionless counterpart. $b$ and $c$ correspond to the drift and diffusion coefficients of this target position, whereas the mean-reverting part of the drift of \eqref{eq:SDE} is the absolutely continuous control applied to steer the actual position in its direction. Up to rescaling, the asymptotic parameter $\epsi$ corresponds to the size of the trading cost. As it decreases, the mean-reversion becomes faster and faster and the frictional positions eventually converge to their frictionless counterparts. 

In order to determine asymptotically optimal portfolios, the average squared values of the deviations \eqref{eq:SDE} need to be traded off against the corresponding trading costs (a nonlinear functional of the control that is applied). In the present paper, we develop limit theorems that allow, in particular, to compute both of these terms in closed form at the leading-order for small $\epsi$.

\paragraph{Results} We first establish that the SDE~\eqref{eq:SDE} is well posed despite the superlinear growth of its drift rate at infinity; for better readability, the proof of this result is deferred to Section~\ref{section.proof.ex}.

\begin{proposition}\label{prop:sde}
	For each $\epsi > 0$, and $x^\epsi_0\in \bR$, there exists a unique strong solution of the SDE~\eqref{eq:SDE}.
\end{proposition}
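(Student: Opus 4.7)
The plan is to combine a standard local existence argument, made available by the local Lipschitz continuity of $g$, with an a priori Lyapunov bound that exploits the one-sided (dissipative) structure of the mean-reverting drift. Since the coefficient processes $b, c, L, M$ are continuous on $[0,T]$ and hence a.s.\ bounded, I would localise the problem on a sequence of stochastic subintervals and then patch the pieces together.

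First I would introduce, for each integer $K\geq 1$, the stopping time
$$
\sigma_K := T\wedge \inf\bigl\{t\geq 0 : |b_t| + c_t + 1/c_t + L_t + 1/L_t + M_t + 1/M_t \geq K\bigr\}.
$$
Continuity of $b, c, L, M$ together with the positivity of $c, L, M$ yields $\sigma_K\uparrow T$ almost surely. On the stochastic interval $\llbracket 0,\sigma_K\rrbracket$, the drift coefficient $(t,x)\mapsto b_t-\tfrac{c_tL_t}{\epsi} g(M_t x/\epsi)$ is locally Lipschitz in $x$, uniformly in $t$, because $g$ is locally Lipschitz and $c, L, M$ are bounded. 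The diffusion coefficient $\sqrt{c_t}$ is bounded and does not depend on $x$. Hence standard results for SDEs with locally Lipschitz coefficients (e.g.\ the Yamada--Watanabe / Protter--type theorems) provide a unique strong solution $X^\epsi$ on $\llbracket 0,\sigma_K\rrbracket$ up to a possible explosion time.

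To rule out explosion I would use an $L^2$ Lyapunov estimate. The crucial observation is that since $g$ is odd, non-decreasing, and nonnegative on $\bR_+$, one has $x\,g(M_t x/\epsi)\geq 0$ for every $x\in\bR$, so the nonlinear drift term is dissipative. For $\tau_n := \inf\{t\geq 0 : |X^\epsi_t|\geq n\}$, applying It\^o's formula to $(X^\epsi_{t\wedge \tau_n\wedge \sigma_K})^2$, discarding the nonpositive contribution from the $g$-term and using $2|x||b_t|\leq x^2+b_t^2$, gives
$$
(X^\epsi_{t\wedge\tau_n\wedge\sigma_K})^2 \leq (x^\epsi_0)^2 + \int_0^{t\wedge\tau_n\wedge\sigma_K}\!\!\bigl((X^\epsi_s)^2+b_s^2+c_s\bigr)\,ds + 2\int_0^{t\wedge\tau_n\wedge\sigma_K} X^\epsi_s\sqrt{c_s}\,dW_s.
$$
Taking expectations (the stopped stochastic integral is a true martingale because its integrand is bounded by $nK^{1/2}$ on the relevant interval), bounding $|b_s|, c_s\leq K$ on $\llbracket 0,\sigma_K\rrbracket$, and applying Gronwall's lemma yields a bound on $\bE\bigl[(X^\epsi_{t\wedge\tau_n\wedge\sigma_K})^2\bigr]$ uniform in $n$. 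Markov's inequality then forces $\bP(\tau_n\leq \sigma_K)\to 0$ as $n\to\infty$, so no explosion occurs before $\sigma_K$.

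Finally, patching the uniquely defined solutions on $\llbracket 0,\sigma_K\rrbracket$ as $K\to\infty$ produces the unique global strong solution on $[0,T]$; uniqueness propagates from each piece by standard arguments. I expect the main delicate point to be the non-explosion step: one has to localise jointly the unbounded (but a.s.\ locally bounded) coefficients and the solution itself, and confirm that the stochastic-integral term becomes a genuine martingale after this double stopping so that the Gronwall estimate can be closed in expectation. Once that is secured, the dissipativity of $g$ does the rest essentially for free.
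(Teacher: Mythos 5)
Your proof is correct, but it takes a genuinely different and more elementary route than the paper. The paper first performs the time change of Lemma~\ref{lem:time.change} and a Girsanov change to the measure $\bQ$ in \eqref{eq:proba.change.drift.Delta}, truncates $g$ to obtain a globally Lipschitz drift (so that \cite[Theorem V.7]{protter.05} applies to the truncated SDE \eqref{eq:truncated.SDE.Delta}), and rules out explosion by bounding $(\widetilde X^{\epsi,n})^2$ by the square of a one-dimensional Bessel process via the comparison Lemma~\ref{lem:app:comparison}, followed by Doob's maximal inequality. You instead apply It\^o to $(X^\epsi)^2$ directly, discard the dissipative nonlinear term (using $x\,g(M_tx/\epsi)\geq 0$), absorb the Brownian drift with $2|x||b_t|\leq x^2+b_t^2$, and close an $L^2$ bound by Gronwall after stopping at $\tau_n\wedge\sigma_K$. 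Your approach buys simplicity: no time change, no Girsanov, no Bessel-process machinery. The paper's approach buys infrastructure: the time change, the measure $\bQ$, and the Bessel comparison are all reused throughout Section~\ref{sec:proofs} and Appendix~\ref{app:UI}, and the comparison argument yields $\epsi$-uniform moment bounds on $X^\epsi/\epsi$ (cf.\ \eqref{eq:proposition:SDE.proof.existence:moment condition} and Lemma~\ref{lemma.uniform.moment.bar}), which your Gronwall estimate does not give since the decaying drift is thrown away. For the purpose of Proposition~\ref{prop:sde} alone both work; one small point is that your citation of ``Yamada--Watanabe / Protter-type theorems'' for local existence with locally Lipschitz random coefficients would be cleaner if made concrete via the same truncation of $g$ that the paper uses, which is what justifies the appeal to Protter's functionally-Lipschitz theorem in the first place.
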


In order to formulate our scaling limits for the quickly mean-reverting processes~\eqref{eq:SDE}, we fix a function 
$f : \mathbb{R} \to \mathbb{R}$ that is even,\footnote{The assumption that $f$ is even is crucially used in the arguments leading to Lemma~\ref{lem:sandwich:2}. In its proof, we bound the process $(X^\epsi)^2$ from above and below by positive processes, and the fact that $f$ is even allows us in turn to derive bounds for $f(X^\epsi)$.} of finite variation on compacts and satisfies the following polynomial growth condition:
\begin{equation}
\label{eq:condition.f}
\left|f(x)\right|\leqslant C_f(\left|x\right|^{q'}+1), \quad \mbox{for some $q'\geqslant 0$ and some $C_f > 0$.}
\end{equation}

We can now formulate our first scaling limit, whose proof is delegated to Section~\ref{sec:proofs}. Since this first result only asserts convergence in probability, it does not require any integrability assumptions on the primitives of the SDE~\eqref{eq:SDE}.

\begin{theorem}
	\label{theorem:up}
	Let $(H_t)_{t \in [0, T]}$ and $(K_t)_{t \in [0, T]}$ be nonnegative, continuous, adapted processes. Then, as $\epsi \to 0$, the following limit holds uniformly in probability:
	\begin{equation}
	\label{eq:thm:up}
	\int_0^{\cdot} H_s f\bigg(\frac{K_s X^\epsi_s}{\epsi}\bigg)ds \longrightarrow \int_0^\cdot H_s \frac{\int_{\bR} f\big(\frac{K_s}{M_s}y\big) \exp\big(-2 \frac{L_s}{M_s} G(y)\big)dy}{\int_{\bR}\exp\big(-2 \frac{L_s}{M_s} G(y)\big)dy}ds.
	\end{equation}
\end{theorem}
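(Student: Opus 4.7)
The strategy is an It\^o--Poisson-equation argument combined with a two-scale separation between the fast process $X^\epsi$ and the slowly varying adapted coefficients $b,c,L,M,H,K$. For each $s$, the ``frozen'' fast dynamics---obtained by replacing $b_s, c_s, L_s, M_s$ by constants and rescaling $X^\epsi_s$ by $M_s/\epsi$---is an ergodic one-dimensional diffusion whose invariant density is proportional to $\exp\bigl(-2(L_s/M_s)G(y)\bigr)$; a direct speed-measure computation identifies the right-hand side of \eqref{eq:thm:up} as the spatial average of $f(K_s X^\epsi_s/\epsi)$ under this invariant measure.

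I would first obtain uniform-in-$\epsi$ moment bounds of the form $\bE\bigl[\sup_{s\le T}|X^\epsi_s/\epsi|^p\bigr]\le C_p$ for every $p\ge 1$, by applying It\^o's formula to $G(M_s X^\epsi_s/\epsi)$ and exploiting the superlinear growth in \eqref{eq:growth.g} to extract mean-reversion. A standard localization then reduces to the case where $b$, $H$, $K$, $c^{\pm1}$, $L^{\pm1}$, $M^{\pm1}$ are all bounded. Next, I would partition $[0,T]$ by $0=t_0<t_1<\cdots<t_N=T$ of mesh $\delta$ and, on each sub-interval $[t_i,t_{i+1})$, let $\varphi_i(y)$ solve the frozen Poisson equation
\[
\tfrac12\, c_{t_i}M_{t_i}^2\, \varphi_i''(y)\;-\;c_{t_i}L_{t_i}M_{t_i}\, g(y)\,\varphi_i'(y)\;=\;f\bigl(\tfrac{K_{t_i}}{M_{t_i}}y\bigr)-\bar f(t_i),
\]
where $\bar f(t_i)$ denotes the spatial mean on the right-hand side of \eqref{eq:thm:up} evaluated at $t_i$. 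Explicit integration against the speed measure produces such a $\varphi_i$, and the rapid decay of the invariant measure (due to \eqref{eq:growth.g}) shows that $\varphi_i$ and $\varphi_i'$ have polynomial growth of order strictly smaller than $q'$.

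I would then apply It\^o's formula to $\epsi^2\varphi_i(M_{t_i}X^\epsi_s/\epsi)$ on $[t_i,t_{i+1}]$. By construction, the $O(1)$ terms produced by the Poisson equation equal $f(K_{t_i}X^\epsi_s/\epsi)-\bar f(t_i)$ up to an error of order $|c_s-c_{t_i}|+|L_s-L_{t_i}|+|M_s-M_{t_i}|$ times a polynomial in $X^\epsi_s/\epsi$; the boundary contribution is $O(\epsi^2)$ times a polynomial in $X^\epsi/\epsi$; the remaining drift contribution from $b_s$ is $O(\epsi)$; and the stochastic integral has quadratic variation $O(\epsi^2)$. Summing over $i$, multiplying by the bounded continuous $H$, and invoking Doob's inequality, one passes from pointwise estimates to uniform-in-time bounds in probability. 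Replacing the frozen integrand by the true one $H_s\bigl[f(K_s X^\epsi_s/\epsi)-\bar f(s)\bigr]$ then costs a further modulus-of-continuity error $\omega_\delta\to 0$ by continuity of the coefficients. Sending $\epsi\to 0$ before $\delta\to 0$ closes the argument.

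The main obstacle is the non-Markovian, non-semimartingale nature of the coefficient processes: this rules out the classical averaging framework and forces the piecewise-constant approximation just described. The delicate point is to keep the accumulated errors under control: the boundary term picks up a factor $1/\delta$ upon summation, while one simultaneously needs $\delta\to 0$, so one must carefully choose the order of the limits and rely on the uniform moment estimates to bound polynomial expressions in $X^\epsi_s/\epsi$ independently of $\epsi$.
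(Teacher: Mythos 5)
Your plan follows the classical ``corrector'' (It\^o--Poisson-equation) route to averaging, which is genuinely different from the paper's approach. The paper instead works purely by \emph{comparison and monotonicity}: after localizing to bounded coefficients, time-changing to constant volatility and removing the $b$-drift by Girsanov, it sandwiches the local time average on $[t,t+\epsi]$ between the corresponding functionals of two one-dimensional diffusions whose coefficients are \emph{frozen} elementary $\cF_t$-measurable random variables (Lemmas~\ref{lem:sandwich}--\ref{lem:sandwich:2}), then invokes an ergodic theorem for those frozen diffusions (where the subtlety is that both the process and the time horizon depend on $\epsi$, so convergence is only in probability, Lemma~\ref{lem:ergodic elementary}), and finally concatenates the local limits via Fubini plus a measure-theoretic lemma (Lemma~\ref{lemma.abstract}). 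Theorem~\ref{theorem:up} then falls out as a corollary of the $\cS^p$ result for bounded coefficients.

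There are two concrete gaps in the corrector approach as you have set it up, both stemming from the fact that the paper's hypotheses on $f$ and $g$ are weaker than what the Poisson-equation error estimates require.

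First, $f$ is assumed only to be even, of finite variation on compacts, and of polynomial growth; it need not be continuous. Your step ``Replacing the frozen integrand by the true one $\dots$ costs a further modulus-of-continuity error $\omega_\delta\to 0$ by continuity of the coefficients'' implicitly uses continuity of $f$, because the error $|f(K_s X^\epsi_s/\epsi)-f(K_{t_i}X^\epsi_s/\epsi)|$ is not $o(1)$ uniformly if $f$ has jumps. The paper avoids this by decomposing $f$ into two even, non-decreasing-on-$\bR_+$ pieces and exploiting \emph{monotonicity}: $f(K_s x)$ is then sandwiched between $f(K_t(1-\delta)x)$ and $f((K_t+\delta)x)$, which needs no continuity at all.

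Second, and more seriously, condition~\eqref{eq:growth.g} imposes only a \emph{lower} polynomial growth bound on $g$, not an upper one. Your key error estimate --- that the $O(1)$ drift term, after substituting the true $c_s,L_s,M_s$ for the frozen $c_{t_i},L_{t_i},M_{t_i}$, deviates from $f(K_{t_i}X^\epsi_s/\epsi)-\bar f(t_i)$ by $(|c_s-c_{t_i}|+|L_s-L_{t_i}|+|M_s-M_{t_i}|)$ times a polynomial in $X^\epsi_s/\epsi$ --- does not hold under these hypotheses. The mismatch produces a term of the form $\varphi_i'(y)\bigl[c_{t_i}L_{t_i}g(y)-c_sL_s\,g\bigl(\tfrac{M_s}{M_{t_i}}y\bigr)\bigr]$ with $y=M_{t_i}X^\epsi_s/\epsi$. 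From the Poisson equation, $g(y)\varphi_i'(y)$ is polynomially bounded, but $g\bigl(\tfrac{M_s}{M_{t_i}}y\bigr)\varphi_i'(y)$ involves the ratio $g(\lambda y)/g(y)$ with $\lambda$ close to but not equal to $1$; this ratio need not be bounded (e.g.\ $g(x)=xe^{x^2}$ is locally Lipschitz, odd, nondecreasing, superlinear, and satisfies \eqref{eq:growth.g}, yet $g(\lambda y)/g(y)\to\infty$). The paper's comparison argument bypasses this entirely: it only uses that $g$ is \emph{nondecreasing}, so that replacing $L_s,M_s$ by their frozen over/under-estimates tilts the drift monotonically, allowing Lemma~\ref{lem:app:comparison} to sandwich the squared process pathwise without any pointwise error estimate. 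A minor related point: applying It\^o's formula to $G(M_sX^\epsi_s/\epsi)$ to get moment bounds is not directly possible since $M$ is merely a continuous adapted process, not a semimartingale; the paper obtains uniform moments by a measure change and a pathwise comparison with a squared Bessel process (Lemma~\ref{lemma.uniform.moment.bar}).

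Your outline would work under the stronger Markovian/upper-growth hypotheses that the paper explicitly aims to avoid (compare the discussion of \cite{pardoux.veretennikov.01,pardoux.veretennikov.03} in Remarks~\ref{rem:corollary.Vere} and \ref{rem:comparison:Vere}). To cover the paper's full generality, you would have to replace the pointwise Poisson-equation error bounds by the monotonicity/comparison machinery.
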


The intuition for the limit~\eqref{eq:thm:up} is the following. As the mean-reversion speed of the processes \eqref{eq:SDE} becomes faster and faster, one can essentially treat the ``slow'' processes $H$, $K$, $b$, $c$, $L$, $M$ as constant on each time step in a fine partition of $[0,T]$. In contrast, the rescaled process $X^\epsi/\epsi$ converges to a mean-reverting one-dimensional diffusion on each of these infinitesimal intervals. The limit~\eqref{eq:thm:up} asserts that, as $\epsi \to 0$, one can replace $X^\epsi/\epsi$ in the integrand by an integral with respect to the stationary distribution of this limiting process, which is given in terms of its normalized and rescaled speed measure.\footnote{In the limiting case where the rescaled process is a reflected diffusion, the stationary law is uniform, as exploited in \cite{ahrens.15,cai.al.15,cai.al.16}. Conversely, if the rescaled process has linear mean reversion, it is Gaussian, compare \cite{cai.al.15,cai.al.16}. More general averaging results for Markovian settings are developed in \cite{pardoux.veretennikov.01,pardoux.veretennikov.03}, for example.}

Results similar to Theorem~\ref{theorem:up} have been developed under abstract assumptions and verified for the simplest case of linear mean-reversion speeds in \cite[Section 3.2]{cai.al.16}. Here, we extend this to the nonlinear trading speeds arising naturally in the context of square-root price impact and show that  convergence in probability remains valid under minimal assumptions also in this case.

Our second main result provides conditions under which this limit theorem can be lifted to convergence in expectation. (For better readability, the proof is delegated to Section~\ref{sec:proofs}.) This is needed to study the small-cost asymptotics of expected utility maximization problems rather than the pathwise, quadratic criteria of \cite{cai.al.15}.  Unlike for singularly- or impulse-controlled deviations (where integrability is inherited directly from the corresponding trading boundaries \cite{ahrens.15,feodoria.16,herdegen.muhlekarbe.17}), this necessitates further delicate estimates that require the following integrability assumptions on the processes $b, c, L, M, H, K$ appearing in the SDE for $X^\epsi$ and the scaling limit~\eqref{eq:thm:up}:\footnote{Assumptions \ref{ass.novikov} and \ref{ass.integrability} are required to apply H\"older's inequality in Lemmas~\ref{lemma.uniform.moment.bar}, \ref{lem:U.I}, \ref{lem.max.inequality}, and \ref{lem:app:integrability estimate}.}

\begin{assumption}\label{ass.novikov} 
$\bE[\exp(8\int_{0}^{T}\frac{b_t^2}{c_t}dt)]<\infty$.
\end{assumption}

\begin{assumption}\label{ass.integrability}
For $p\geqslant 1$, there exists $\eta>0$ such that 
\begin{align}
&\bE\left[\sup_{u\in[0,T]}\left(L_{u}c_{u}\wedge M_{u}\right)^{-\frac{2(q+1)( 4pq'(1+\eta)\vee 2)}{q-1}}\right]+\bE\left[\sup_{u \in [0, T]}c^{8pq'(1+\eta)\vee 4}_u\right]\notag\\
&+\bE\left[\int_{0}^{T}H^{p(1+\eta)}_tdt\right]+\bE\left[\int_{0}^{T}\left(H_tK^{q'}_t\right)^{ 2p(1+\eta)}dt\right]+\bE\left[\int_{0}^{T}\left(\frac{1}{L_s \wedge M_s}\right)^{4 pq^\prime(1+\eta)}dt\right] \notag \\
&+\bE\left[\int_{0}^{T}\left(\frac{M_t}{L_t}\right)^{ 4p(1+\eta)}dt\right]  +\bE\left[\int_{0}^{T}\exp\left(\eta \frac{L_t}{M_t} \right) dt\right] < \infty.\notag
\end{align}
\end{assumption}
\begin{remark}
	\label{rem:integrability}
	If $q=1$ in \eqref{eq:growth.g}, then the first moment condition of Assumption~\ref{ass.integrability} is to be understood as $\text{essinf}_{u\in[0,T]}(L_{u}c_{u}\wedge M_{u}) >0$.
\end{remark}

We now turn to our second main result, which is the main tool for the analysis of asymptotically optimal trading strategies with nonlinear trading costs in the companion paper of the present study~\cite{caye.al.17}:

\begin{theorem}
	\label{theorem}
	Suppose that Assumptions \ref{ass.novikov} and \ref{ass.integrability}  are satisfied for some $p \geqslant 1$. Then the scaling limit~\eqref{eq:thm:up} also holds in $\cS^p([0,T])$.\footnote{$\cS^p\left([0,T]\right)$ denotes the set of c\`adl\`ag adapted processes whose running supremum on $[0,T]$ has finite absolute $p$-th moments. A sequence of processes $(Y^{(n)})_{n\in\bN}$ converges to $Y$ in $\cS^p([0,T])$ if $\lim_{n\to\infty} \bE[\sup_{t\in[0,T]}|Y^{(n)}_t-Y_t|^p]=0$. Assumption~\ref{ass.integrability} guarantees that the limit in Theorem~\ref{theorem} is finite, cf.~Lemma~\ref{lem:app:integrability estimate}.}
\end{theorem}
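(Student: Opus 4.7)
The plan is to combine the uniform-in-probability convergence provided by Theorem~\ref{theorem:up} with a Vitali-type uniform-integrability argument. Set
\begin{equation*}
Y^\epsi_t := \int_0^t H_s f\!\left(\tfrac{K_s X^\epsi_s}{\epsi}\right) ds, \qquad Y_t := \int_0^t H_s \frac{\int_{\bR} f\big(\tfrac{K_s}{M_s}y\big)\, e^{-2\frac{L_s}{M_s} G(y)} dy}{\int_{\bR} e^{-2 \frac{L_s}{M_s} G(y)} dy}\,ds.
\end{equation*}
Theorem~\ref{theorem:up} yields $\sup_{t \in [0,T]} |Y^\epsi_t - Y_t| \to 0$ in probability. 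By Vitali's theorem, upgrading this to $\cS^p([0,T])$-convergence reduces to showing that $\{\sup_{t \in [0,T]}|Y^\epsi_t - Y_t|^{p}\}_{\epsi>0}$ is uniformly integrable, and by de la Vall\'ee-Poussin it suffices to establish the uniform bound $\sup_{\epsi > 0}\bE[\sup_{t \in [0,T]}|Y^\epsi_t|^{p(1+\eta)}] < \infty$ with $\eta$ as in Assumption~\ref{ass.integrability}, together with the same bound for $Y$. The bound on $Y$ is a deterministic consequence of Assumption~\ref{ass.integrability} via the explicit formula for the speed-measure integrand (the content of Lemma~\ref{lem:app:integrability estimate}), so the work lies entirely with $Y^\epsi$.

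Using the polynomial growth~\eqref{eq:condition.f} of $f$ together with Jensen's inequality in the $s$-variable,
\begin{equation*}
\sup_{t \in [0,T]} |Y^\epsi_t|^{p(1+\eta)} \leq C\, T^{p(1+\eta)-1}\!\int_0^T \Big(H_s^{p(1+\eta)} + (H_s K_s^{q'})^{p(1+\eta)}\,\big|\tfrac{X^\epsi_s}{\epsi}\big|^{p(1+\eta) q'}\Big) ds.
\end{equation*}
The first summand is integrable under Assumption~\ref{ass.integrability}. For the second, Cauchy--Schwarz (applied jointly on $\Omega \times [0,T]$) separates the data-dependent factor $(H_s K_s^{q'})^{2p(1+\eta)}$, whose $L^1$-norm is controlled by Assumption~\ref{ass.integrability}, from the crucial uniform moment estimate on the rescaled process,
\begin{equation*}
\sup_{\epsi > 0}\, \bE\!\left[\int_0^T \Big|\tfrac{X^\epsi_s}{\epsi}\Big|^{2p(1+\eta) q'} ds\right] < \infty,
\end{equation*}
which is the content of (a slight strengthening of) Lemma~\ref{lemma.uniform.moment.bar}; the higher exponents $4pq'(1+\eta)\vee 2$ appearing in Assumption~\ref{ass.integrability} arise from using H\"older with an extra layer of room to afterwards apply Lemma~\ref{lem:U.I}.

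The main obstacle is this uniform moment bound. The approach is an It\^o--Lyapunov argument exploiting the superlinear mean reversion~\eqref{eq:growth.g}: applied to a power of $M_s X^\epsi_s/\epsi$ (or to $G(M_s X^\epsi_s/\epsi)^{k}$ with $k$ tuned to the target exponent), It\^o's formula produces a restoring drift proportional to $\epsi^{-1} L_s g(M_s X^\epsi_s/\epsi)$ that, by~\eqref{eq:growth.g}, is of order at least $|X^\epsi_s/\epsi|^{q+k-1}$. This dominates the It\^o correction of order $c_s |X^\epsi_s/\epsi|^{k-2}$ whenever $|X^\epsi_s/\epsi|$ exceeds a threshold of order $(L_s c_s \wedge M_s)^{-1/(q-1)}$ --- which explains the negative moments of $L c \wedge M$ with exponent proportional to $1/(q-1)$ appearing in Assumption~\ref{ass.integrability}, and also clarifies why Remark~\ref{rem:integrability} replaces them by an essential infimum when $q=1$. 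The drift $b_s$ is absorbed through a Girsanov change of measure whose Radon--Nikodym density lies in every $L^{1+\eta'}$ thanks to Assumption~\ref{ass.novikov}, and the hypothesis $\limsup_{\epsi \to 0} |x^\epsi_0|/\epsi < \infty$ handles the initial data uniformly in $\epsi$. Combining the resulting uniform moment bound on $X^\epsi/\epsi$ with the H\"older estimates above, and then invoking Vitali--de la Vall\'ee-Poussin, produces the desired $\cS^p$-convergence.
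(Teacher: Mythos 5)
Your blueprint is sound, but it is organized in the reverse order from the paper's own proof, and this reversal hides a dependency worth making explicit. You propose: take Theorem~\ref{theorem:up} (ucp convergence) as given, establish $\sup_{\epsi>0}\bE[\sup_t|Y^\epsi_t|^{p(1+\eta)}]<\infty$ via H\"older and a uniform moment bound on $X^\epsi/\epsi$, then invoke de~la~Vall\'ee-Poussin and Vitali. That scheme is correct in principle, and the ingredients you identify (Lemma~\ref{lemma.uniform.moment.bar} for the uniform $\bQ$-moments, a Girsanov/Cauchy--Schwarz bridge to $\bP$-moments via Assumption~\ref{ass.novikov}, Lemma~\ref{lem:U.I}-type estimates, Lemma~\ref{lem:app:integrability estimate} for the limit) are exactly the right ones. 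However, in the paper Theorem~\ref{theorem:up} is \emph{not} proved independently of Theorem~\ref{theorem}: after the localization in Section~\ref{sec:reduction}, the bounded-coefficients case of Theorem~\ref{theorem} is established directly (Propositions~\ref{proposition.limit.integrals}, the Fubini/concatenation argument, and the maximal inequality of Lemma~\ref{lem.max.inequality}), and Theorem~\ref{theorem:up} is then a trivial corollary because $\cS^p$-convergence implies ucp. So your route, read literally, is circular: it uses a consequence of the result being proved. It becomes non-circular only if you either (a) prove the ucp statement independently (which essentially forces you to redo Section~\ref{sec:local estimation} and the concatenation step anyway), or (b) acknowledge that the real content lives inside the proof of Theorem~\ref{theorem:up}. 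The paper avoids this by applying its Vitali-type argument not to the full integral $Y^\epsi$ but to the \emph{integrand} $a^{1,\epsi}_t$, via Lemma~\ref{lemma.abstract}: pointwise-in-$t$ convergence in probability plus uniform integrability on the product space $\Omega\times[0,T]$ already yields $\cS^p$ convergence of the integral, without ever needing ucp as an intermediate input. Your exposition is a cleaner ``ucp-then-upgrade'' narrative, but it does not shortcut the hard estimates; it only reorders them, and the order matters here because of the dependency between the two theorems. A small technical remark: the exponent $4pq'(1+\eta)$ in Assumption~\ref{ass.integrability} does not come from ``extra H\"older room'' to apply Lemma~\ref{lem:U.I} afterwards, but specifically from the Cauchy--Schwarz step converting $\bP$-moments to $\bQ$-moments in Lemma~\ref{lem:U.I}; Lemma~\ref{lemma.uniform.moment.bar} delivers moments only under $\bQ$, so the doubling of the exponent is exactly the cost of the measure change, paid once via Assumption~\ref{ass.novikov}.
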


\begin{remark}\label{rem:corollary.Vere}
	(i) For sufficiently regular Markovian settings, Theorem~\ref{theorem:up} with convergence in probability for all $t\in[0,T]$ and Theorem~\ref{theorem} with convergence in $L^p$ instead of $\cS^p$ are consequences of \cite[Theorem 4]{pardoux.veretennikov.03}. 
	
	(ii)  \cite[Theorem 4]{pardoux.veretennikov.03} requires weaker growth conditions for the function $g$ than~\eqref{eq:growth.g}. Under additional assumptions on the diffusion coefficient of $X^\epsi$, our results can be extended in this direction. (We are grateful to one of the referees for pointing this out.) To illustrate this, suppose that $c_t=\sigma^2>0$ is constant and that, instead of \eqref{eq:growth.g} for $g$, there exists $C,\tilde{C}>0$ such that, for all $x>\tilde{C}$,
	\begin{align}
	\label{eq:condition.Veretennikov}
		L_t	g(M_t x)\geqslant C, \quad t\in [0,T], \quad \mbox{and} \quad G(x)\geqslant Cx.
	\end{align}
	Assume moreover that the integrability conditions from Assumption~\ref{ass.integrability} are satisfied (except for the first two terms, which are not needed in this case). Then, the convergence result in Theorem~\ref{theorem:up} holds in probability for all $t\in[0,T]$ instead of uniformly on compacts in probability. Likewise, the convergence in Theorem~\ref{theorem} holds in $L^p$ instead of in $\cS^p$.
	
	The above extension heavily relies on the comparison theorem for SDEs; cf.~Remark \ref{rem:comparison:Vere}. For more general diffusion coefficients, one would have to extend the arguments in Veretennikov \cite{veretennikov.97} to non-constant volatilities. As the latter will typically depend on the asymptotic parameter $\epsi$, this is rather challenging.
\end{remark}

The remainder of this article is organized as follows. Section~\ref{sec:reduction} describes a localization argument that allows to reduce the analysis to the case of bounded coefficients. This crucially relies on a uniform integrability result (Lemma~\ref{lem:U.I}) established in Appendix~\ref{app:UI}. Section~\ref{section.proof.ex} contains the proof of Proposition~\ref{prop:sde} and introduces the fundamental time and measure changes used throughout the rest of the paper. Section~\ref{sec:proofs} contains the proof of our main result, Theorem \ref{theorem}; due to the localization argument from Section~\ref{sec:reduction}, the proof of Theorem \ref{theorem:up} turns out to be a simple corollary. The proof of Theorem~\ref{theorem} is rather delicate. First, in Section \ref{sec:local estimation} we establish a local scaling limit on a small interval. This combines sandwiching and approximation arguments with ergodic theory for one-dimensional diffusions. A key difficulty for the ergodic result in Lemma \ref{lem:ergodic elementary} is that the diffusions under consideration also depend on the time horizon (via the small parameter $\epsi$). In a second step, we concatenate the local limit theorems to a global limit theorem in Section \ref{section.concatenation}. Appendix \ref{app:UI} contains the already mentioned result on uniform integrability. Appendix \ref{app:maximal inequality} proves a maximal inequality in the spirit of Peskir~\cite{peskir.01} necessary for the concatenation argument in Section~\ref{section.concatenation}. Appendix~\ref{app:comparison} establishes some comparison and existence results for SDEs, and Appendix~\ref{app:auxiliary} concludes with some other auxiliary results.

\section{Reduction to Bounded Coefficients}
\label{sec:reduction}
In this section, we show why -- up to the results on uniform integrability in Appendix \ref{app:UI} -- it suffices to establish all results for the case of bounded coefficients. For fixed $\kappa \in (0, 1)$, define the stopping time $\tau^\kappa$ by
\begin{align}
\label{def:tau kappa}
\tau^\kappa := \inf\bigg\{&t \in [0, T]: \int_0^t \frac{b_u^2}{c_u} du > \frac{1}{\kappa}, H_t > \frac{1}{\kappa}, K_t > \frac{1}{\kappa}, \notag \\
 &c_t \notin \left[\kappa, \frac{1}{\kappa}\right], L_t \notin \left[\kappa, \frac{1}{\kappa}\right], M_t \notin \left[\kappa, \frac{1}{\kappa}\right] \bigg\} \wedge T.
\end{align}
Note that the stopping times $\tau^\kappa$ are non-increasing in $\kappa$, and for every $\omega$ outside of a null set, there is a $\kappa(\omega)>0$ such that $\tau^\kappa=T$ for $0<\kappa\leqslant \kappa(\omega)$, by continuity of $b, c, H, K, L, M$ and positivity of $c, L, M$. This in turn implies that
\begin{equation}
\label{eq:tau kappa}
\lim_{\kappa \to  0}\bP[\tau^\kappa = T] =  1.
\end{equation}
For $\Upsilon \in \{b, c, H, K, L, M \}$, introduce the stopped processes 
\begin{equation*}
\Upsilon^\kappa_t := \Upsilon_t^{\tau^\kappa}, \quad t \in [0, T],
\end{equation*}
and consider for fixed $\epsi > 0$ the corresponding SDE
\begin{equation}\label{eq:SDE:kappa}
dX^{\epsi,\kappa}_t= \left(b^\kappa_t-\frac{c^\kappa_tL^\kappa_t}{\epsi} g\left(\frac{M^\kappa_t X^{\epsi,\kappa}_t}{\epsi}\right)\right) dt + \sqrt{c^\kappa_t} dW_t, \quad X^{\epsi,\kappa}_0=x_0^\epsi, \quad t\in [0,T].
\end{equation}
Note that the SDE for $X^{\epsi,\kappa}$ coincides with the SDE \eqref{eq:SDE} for $X^\epsi$ on $\llbracket 0, \tau^\kappa \rrbracket$.

Now suppose that Proposition \ref{prop:sde} as well as Theorems \ref{theorem:up} and \ref{theorem} have been established for $X^{\epsi, \kappa}$, $H^\kappa$, and $K^\kappa$ for each fixed $\kappa \in (0, 1)$.
Then Proposition \ref{prop:sde} for $X^{\epsi}$ follows from \eqref{eq:tau kappa} and pathwise uniqueness of strong solutions. In particular, we have
\begin{equation}
\label{eq:X epsi kappa}
X^\epsi  = X^{\epsi,\kappa}   \text{ on } \llbracket 0, \tau^\kappa \rrbracket.
\end{equation}

Next, to establish Theorem \ref{theorem:up} for $X^{\epsi}$, set
\begin{align*}
w_t &:= \frac{\int_{\bR} f\left(\frac{K_t}{M_t}y\right) \exp\left(-2 \frac{L_t}{M_t} G(y)\right)dy}{\int_{\bR}\exp\left(-2 \frac{L_t}{M_t} G(y)\right)dy}, \quad w^\kappa_t := \frac{\int_{\bR} f\left(\frac{K^\kappa_t}{M^\kappa_t}y\right) \exp\left(-2 \frac{L^\kappa_t}{M^\kappa_t} G(y)\right)dy}{\int_{\bR}\exp\left(-2 \frac{L^\kappa_t}{M^\kappa_t} G(y)\right)dy},
\end{align*}
for $t \in [0, T]$ and $\kappa \in (0, 1)$. Then \eqref{eq:X epsi kappa} and Theorem \ref{theorem:up} for each $X^{\epsi, \kappa}$ give
\begin{align*}
&\lim_{\epsi \to 0} \bE\left[\sup_{t \in [0, T]} \left|\int_0^{t} H_s f\left(\frac{K_s X^\epsi_s}{\epsi}\right)ds - \int_0^t H_s w_s ds \right| \wedge 1 \right] \\
&\quad \leqslant \lim_{\epsi \to 0} \bE\left[\bigg(\sup_{t \in [0, T]} \left|\int_0^{t} H^\kappa_s f\left(\frac{K^\kappa_s X^{\epsi,\kappa}_s}{\epsi}\right)ds - \int_0^t H^\kappa_s w^\kappa_s ds \right| \wedge 1 \Bigg) \one_{\{\tau^\kappa = T\}} \right] + \bP[\tau^\kappa < T] \notag \\
&\quad \leqslant \bP[\tau^\kappa < T].
\end{align*}
Now Theorem \ref{theorem:up} for $X^\epsi$ follows from one of the equivalent characterizations of the convergence in probability (cf. \cite[p.~63]{kallenberg.02}) by letting $\kappa \to 0$ and using \eqref{eq:tau kappa}.

Finally, by \eqref{eq:X epsi kappa}, Theorem \ref{theorem} for each $X^{\epsi, \kappa}$, and H\"older's inequality, we obtain
\begin{align*} 
&\lim_{\epsi \to 0} \bE\left[\sup_{t \in [0, T]} \left|\int_0^{t} H_s f\left(\frac{K_s X^\epsi_s}{\epsi}\right)ds - \int_0^t H_s w_s ds \right|^p \right] \\
&\quad \leqslant \lim_{\epsi \to 0} \bE\left[\sup_{t \in [0, T]} \left|\int_0^{t} H^\kappa_s f\left(\frac{K^\kappa_s X^{\epsi,\kappa}_s}{\epsi}\right)ds - \int_0^t H^\kappa_s w^\kappa_s ds \right|^p \one_{\{\tau^\kappa = T\}} \right] \\
&\quad\quad + \sup_{\epsi > 0} \bE\left[\sup_{t \in [0, T]} \left|\int_0^{t} H_s f\left(\frac{K_s X^\epsi_s}{\epsi}\right)ds - \int_0^t H_s w_s ds \right|^p \one_{\{\tau^\kappa < T\}} \right]  \notag \\
&\quad \leqslant \sup_{\epsi > 0}  \bE\left[\left(\int_0^T H_s f\left(\frac{K_s X^\epsi_s}{\epsi}\right)ds + \int_0^T H_s w_s ds \right)^p \one_{\{\tau^\kappa < T\}} \right] \\
&\quad \leqslant \sup_{\epsi > 0} \bE\left[\left(\int_0^T H_s f\left(\frac{K_s X^\epsi_s}{\epsi}\right)ds + \int_0^T H_s w_s ds \right)^{p(1 + \eta)}\right]^{\frac{1}{1+\eta}} \bP[\tau^\kappa < T]^{\frac{\eta}{1 +\eta}},
\end{align*}
with $\eta$ as in Assumption \ref{ass.integrability}. Now Theorem \ref{theorem} for $X^\epsi$ follows by letting $\kappa \to 0$, using \eqref{eq:tau kappa} and noting that, under Assumption~\ref{ass.novikov} and~\ref{ass.integrability}, 
\begin{equation}
\label{eq:theorem:bounded estimate}
\sup_{\epsi > 0}  \bE\left[\left(\int_0^T H_s f\left(\frac{K_s X^\epsi_s}{\epsi}\right)ds + \int_0^T H_s w_s ds \right)^{p(1+ \eta)}\right] < \infty.
\end{equation}
Indeed, using the elementary inequality $(a + b)^{p(1+\eta)} \leqslant 2^{p(1+\eta)}(a^{p(1+\eta)} + b^{p(1+\eta)})$ for $a , b \geqslant 0$, \eqref{eq:theorem:bounded estimate} follows from 
\begin{equation*}
\sup_{\epsi > 0}  \bE\left[\left(\int_0^T H_s f\left(\frac{K_s X^\epsi_s}{\epsi}\right) ds\right)^{p(1+\eta)} \right] < \infty \quad \text{and} \quad \bE\left[\left(\int_0^T H_s w_s ds \right)^{p(1+ \eta)}\right] < \infty.
\end{equation*}
Here, the first estimate follows from Jensen's inequality and Lemma \ref{lem:U.I}, the second estimate follows from Lemma \ref{lem:app:integrability estimate}.

In summary, it therefore remains to establish Proposition~\ref{prop:sde} and Theorem~\ref{theorem}, respectively, for uniformly bounded coefficients in order to prove Proposition~\ref{prop:sde} and Theorem~\ref{theorem:up}. To establish Theorem~\ref{theorem} for general coefficients, it additionally remains to be shown that the integrability conditions from Assumptions~\ref{ass.novikov} and  \ref{ass.integrability}  imply Lemmas~\ref{lem:U.I} and~\ref{lem:app:integrability estimate}.

\section{Proof of Proposition~\ref{prop:sde}}
\label{section.proof.ex}

In this section, we establish that the SDE~\eqref{eq:SDE} has a unique strong solution on $[0, T]$ for each fixed $\epsi > 0$ and $x_0^\epsi \in \bR$ given that the processes $b$, $c$, $L$ and $M$ are bounded from above by $1/\kappa$ and  $c$, $L$ and $M$ are bounded from below by $\kappa$, for some $\kappa>0$.
By the localization argument from Section~\ref{sec:reduction}, this assumption is without loss of generality in the context of Proposition~\ref{prop:sde}.

First, note that it suffices to show that the SDE \eqref{eq:SDE} has a unique strong solution after a bijective time change, after which the SDE has constant volatility. To this end, for fixed $\epsi > 0$, set
\begin{equation}
\label{eq:xi.epsi.m}
\xi^\epsi=\int_{0}^{T} \epsi^{-2} c_{t} \,dt,
\end{equation}
and make the following standard observation:

\begin{lemma}
	\label{lem:time.change}
	For each $\epsi>0$, the family of stopping times (indexed by $\xi$)
	\begin{equation*}
	u^\epsi_\xi:=\begin{cases} \inf\left\{s\in\bR_+: \int_{0}^{s}\epsi^{-2} c_{r} \,dr >\xi\right\}, &\xi\leqslant\xi^\epsi,\\
T, &\xi>\xi^\epsi,
	\end{cases}
	\end{equation*}
	is strictly increasing in $\xi$ on $\left[ 0,\xi^\epsi(\omega) \right]$ for almost every $\omega\in\Omega$, forms a stochastic time change, and satisfies $u^\epsi_{\xi^\epsi} = T$. Moreover, for almost every $\omega\in\Omega$, $\xi \mapsto u^\epsi_\xi(\omega)$ is differentiable with derivative $\epsi^2/c_{u^\epsi_\xi(\omega)}(\omega)$ on $[0,\xi^\epsi(\omega)]$.
\end{lemma}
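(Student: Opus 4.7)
The plan is to treat $u^\epsi$ as the inverse of the absolutely continuous additive functional $A_s := \int_0^s \epsi^{-2} c_r\,dr$, and to read off all claimed properties from standard facts about inverses of strictly increasing continuous functions, combined with the inverse function theorem for the differentiability statement. Throughout, I work under the standing bounded-coefficient reduction of Section~\ref{sec:reduction}, so that $c$ is continuous and bounded below by some $\kappa>0$ on $[0,T]$.

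First I would record the pathwise behaviour of $A$. Because $c$ is continuous with $c \geqslant \kappa$ almost surely, for $\bP$-a.e.\ $\omega$ the map $s \mapsto A_s(\omega)$ is continuous, continuously differentiable with derivative $\epsi^{-2}c_s(\omega) \geqslant \epsi^{-2}\kappa>0$, and therefore strictly increasing on $[0,T]$, with $A_0(\omega)=0$ and $A_T(\omega)=\xi^\epsi(\omega)$. Consequently, $A(\omega)$ is a homeomorphism of $[0,T]$ onto $[0,\xi^\epsi(\omega)]$, and the definition of $u^\epsi_\xi$ as the debut time $\inf\{s:A_s>\xi\}$ coincides pathwise with the genuine (continuous) inverse $A^{-1}(\omega)$ on $[0,\xi^\epsi(\omega)]$. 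In particular, $u^\epsi_\xi(\omega)$ is strictly increasing in $\xi$ on $[0,\xi^\epsi(\omega)]$ and satisfies $u^\epsi_{\xi^\epsi}=T$. The extension $u^\epsi_\xi=T$ for $\xi>\xi^\epsi$ keeps $\xi \mapsto u^\epsi_\xi$ nondecreasing on all of $\bR_+$.

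Next I would verify the stochastic time change property. For each fixed $\xi\geqslant 0$, the process $(A_s)$ is continuous and adapted, hence progressively measurable, so the hitting time
\[
u^\epsi_\xi = \inf\{s\in\bR_+ : A_s > \xi\}\wedge T
\]
is an $(\cF_t)$-stopping time by the d\'ebut theorem (the filtration satisfies the usual conditions). Together with the pathwise monotonicity and right-continuity (actually continuity) of $\xi\mapsto u^\epsi_\xi$ established above, this is exactly the definition of a stochastic time change.

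Finally, for differentiability, I would invoke the classical inverse function theorem. On $[0,\xi^\epsi(\omega)]$, the function $A(\omega)$ is $C^1$ with strictly positive derivative $\epsi^{-2}c_{\cdot}(\omega)$, so its inverse $u^\epsi_{\cdot}(\omega)$ is $C^1$ with
\[
\frac{d}{d\xi}u^\epsi_\xi(\omega) \;=\; \frac{1}{A'(u^\epsi_\xi(\omega))} \;=\; \frac{\epsi^2}{c_{u^\epsi_\xi(\omega)}(\omega)},
\]
which is precisely the asserted derivative. No step here is genuinely difficult; the only point deserving mild care is the distinction between the two regimes in the definition of $u^\epsi_\xi$, ensuring that the strict-monotonicity and differentiability conclusions are claimed only on $[0,\xi^\epsi]$, where $A$ is genuinely invertible.
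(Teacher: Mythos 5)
The paper states this lemma as a ``standard observation'' without supplying a proof, so there is no argument of its own to compare against. Your proposal is the correct and standard way to fill in the details: identify $u^\epsi$ as the pathwise inverse of the $C^1$, strictly increasing additive functional $A_s = \int_0^s \epsi^{-2} c_r\,dr$, obtain the stopping-time property from progressive measurability of $A$ (or directly from $\{u^\epsi_\xi \leqslant t\} = \{A_t \geqslant \xi\} \in \cF_t$), and derive the derivative formula from the inverse function theorem. The only minor wrinkle is the literal value of $\inf\{s: A_s > \xi^\epsi\}$: with $c$ defined only on $[0,T]$, this set is empty and the infimum is $+\infty$ under the usual convention, so the equality $u^\epsi_{\xi^\epsi}=T$ is really obtained by continuity of the inverse (or by the cap $\wedge T$ you introduce partway through); you should use one consistent convention rather than switching between ``$\inf\{\cdots\}$'' and ``$\inf\{\cdots\}\wedge T$'' as the definition. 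This is cosmetic and does not affect the validity of your argument.
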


We proceed to define a time-changed Brownian motion. Set
\begin{equation}
\label{eq:What}
\widetilde{W}^\epsi_\xi=\int_{0}^{u^\epsi_\xi}\epsi^{-1}\sqrt{c_{s}}dW_s, \quad \xi\geqslant 0.
\end{equation}
This is a $\bP$-Brownian motion, stopped at $\xi^\epsi$, relative to the filtration $\bG^\epsi=(\cG^\epsi_\xi)_{\xi\in\bR_+}$ with $\cG^\epsi_\xi=\cF_{u^\epsi_\xi}$. By Lemma \ref{lem:time.change} and It\^o's formula, it suffices to show that the process $(\widetilde X^\epsi_\xi)_{\xi \geqslant 0}$, defined by
\begin{equation}
\label{eq:relation.X}
\widetilde X^\epsi_\xi := X^\epsi_{u^\epsi_\xi}/\epsi, \quad \xi \geqslant 0,
\end{equation}
is the unique strong solution of the SDE
\begin{equation}
\label{eq:SDE.proof.existence:Q}
d\widetilde{X}^\epsi_\xi = \left(\epsi \frac{b_{u^\epsi_\xi}}{c_{u^\epsi_\xi}} -L_{u^\epsi_\xi} g\left(M_{u^\epsi_\xi}\widetilde{X}^\epsi_\xi\right)\right)\one_{\{\xi \leqslant \xi^\epsi\}} d\xi+ \one_{\{\xi \leqslant \xi^\epsi\}}  d\widetilde{W}^{\epsi}_\xi, \qquad \widetilde{X}^\epsi_0=x^\epsi_0/\epsi.
\end{equation}

\medskip

Next, observe that it suffices to show that the SDE \eqref{eq:SDE.proof.existence:Q} has a unique strong solution on $\llbracket 0, \xi^\epsi \rrbracket$ (after  $\xi^\epsi$, $\widetilde X^\epsi$ is trivially constant) under a measure $\bQ$  that is equivalent to $\bP$ on $\cF_T = \cG_{\xi^\epsi}$. Set
\begin{align} 
\label{eq:proba.change.drift.Delta}
\frac{d\bQ}{d\bP} &:=\exp\left(-\int_{0}^{T}\frac{b_t}{\sqrt{c_t}}dW_t-\frac{1}{2}\int_{0}^{T}\frac{\left(b_t\right)^2}{c_t}dt\right) 
:=\exp\Bigg(-\int_{0}^{\xi^\epsi}\epsi\frac{b_{u^\epsi_\xi}}{c_{u^\epsi_\xi}}d\widetilde{W}^{\epsi}_\xi-\frac{1}{2}\int_{0}^{\xi^\epsi}\epsi^2\Bigg(\frac{b_{u^\epsi_\xi}}{c_{u^\epsi_\xi}}\Bigg)^2d\xi\Bigg),
\end{align}
so that the first part of the drift of \eqref{eq:SDE.proof.existence:Q} is absorbed by the corresponding change of measure. (Note that $\bQ$ is well defined by Novikov's condition given that $\int_0^t \frac{b_u^2}{c_u} du \leq \frac{1}{\kappa}$.) By Girsanov's Theorem, 
\begin{equation}
\label{eq:What:Q}
\widetilde{W}^{\epsi,\bQ}_\xi=\widetilde{W}^\epsi_\xi+\int_{0}^{\xi\wedge\xi^\epsi}\epsi\frac{b_{u^\epsi_y}}{c_{u^\epsi_{y}}}dy, \quad \xi\geqslant 0,
\end{equation}
in turn is a $\bQ$-Brownian motion, stopped at $\xi^\epsi$, relative to the filtration $\bG^\epsi$. Thus, it suffices to show that there is a unique strong solution of
\begin{equation}
\label{eq:SDE.proof.existence}
d\widetilde{X}^\epsi_\xi = -L_{u^\epsi_\xi} g\Big(M_{u^\epsi_\xi}\widetilde{X}^\epsi_\xi\Big) \one_{\left\{\xi\leqslant \xi^\epsi\right\}}d\xi+\one_{\left\{\xi\leqslant \xi^\epsi\right\}}d\widetilde{W}^{\epsi,\bQ}_\xi, \qquad \widetilde{X}^\epsi_0= x^\epsi_0/\epsi.
\end{equation}

This is established in the following result:

\begin{proposition}
\label{proposition:SDE.proof.existence}
For each $\epsi > 0$, there is a unique strong solution $\widetilde{X}^\epsi$ of the SDE \eqref{eq:SDE.proof.existence}. 
\end{proposition}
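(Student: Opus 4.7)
The approach is the classical two-step scheme: use the locally Lipschitz character of the drift in $x$ (uniformly in $(\omega,\xi)$) to obtain local existence and pathwise uniqueness up to an explosion time, then invoke a Lyapunov estimate exploiting the mean-reverting structure of the drift to rule out explosion.

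\emph{Step 1: Local existence and uniqueness.} Write the SDE as $d\widetilde{X}^\epsi_\xi = \mu(\omega,\xi,\widetilde{X}^\epsi_\xi)\,d\xi + \sigma(\omega,\xi)\,d\widetilde{W}^{\epsi,\bQ}_\xi$, where $\sigma(\omega,\xi) = \one_{\{\xi\leqslant \xi^\epsi\}}$ is bounded and progressively measurable, and the random drift $\mu(\omega,\xi,x) = -L_{u^\epsi_\xi}(\omega)\, g(M_{u^\epsi_\xi}(\omega)\, x)\,\one_{\{\xi\leqslant \xi^\epsi(\omega)\}}$. Because $L,M$ take values in $[\kappa,1/\kappa]$ and $g$ is locally Lipschitz, $\mu$ is Lipschitz in $x$ on every ball, with a Lipschitz constant depending only on the ball radius. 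A standard truncation argument---replace $g$ by a bounded Lipschitz modification outside a ball of radius $R$, solve the resulting SDE with globally Lipschitz drift and bounded diffusion by classical theory, and paste the solutions as $R\uparrow\infty$---yields a unique strong solution $\widetilde{X}^\epsi$ up to the explosion time $\tau_\infty := \lim_{n\to\infty}\tau_n$, where $\tau_n := \inf\{\xi\geqslant 0: |\widetilde{X}^\epsi_\xi|\geqslant n\}$.

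\emph{Step 2: Non-explosion via a Lyapunov estimate.} It\^o's formula applied to $(\widetilde{X}^\epsi_\xi)^2$ on $\llbracket 0,\tau_n\rrbracket$ gives
\begin{align*}
(\widetilde{X}^\epsi_{\xi\wedge\tau_n})^2 &= (x_0^\epsi/\epsi)^2 - 2\int_0^{\xi\wedge\tau_n}\widetilde{X}^\epsi_s\, L_{u^\epsi_s}\, g(M_{u^\epsi_s}\widetilde{X}^\epsi_s)\,\one_{\{s\leqslant \xi^\epsi\}}\,ds \\
&\quad + 2\int_0^{\xi\wedge\tau_n}\widetilde{X}^\epsi_s\,\one_{\{s\leqslant \xi^\epsi\}}\,d\widetilde{W}^{\epsi,\bQ}_s + (\xi\wedge\tau_n).
\end{align*}
The key observation is that $g$ is odd and nonnegative on $\bR_+$ and $M_{u^\epsi_s}>0$, so $x\,g(M_{u^\epsi_s}x)\geqslant 0$ for every $x\in\bR$; together with $L_{u^\epsi_s}>0$, this forces the Lebesgue integral to be nonnegative. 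The stochastic integrand is bounded on $\llbracket 0,\tau_n\rrbracket$, hence a true $\bQ$-martingale. Taking $\bQ$-expectations yields
\begin{equation*}
\bE_{\bQ}\!\left[(\widetilde{X}^\epsi_{\xi\wedge\tau_n})^2\right]\leqslant (x_0^\epsi/\epsi)^2 + \xi.
\end{equation*}
Chebyshev's inequality gives $n^2\,\bQ[\tau_n\leqslant \xi]\leqslant (x_0^\epsi/\epsi)^2 + \xi$; letting $n\to\infty$ shows $\bQ[\tau_\infty\leqslant \xi] = 0$ for every $\xi\geqslant 0$, so explosion is ruled out. Combined with the pathwise uniqueness from Step~1, this delivers the unique strong solution of~\eqref{eq:SDE.proof.existence} on $[0,\infty)$.

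\emph{Main obstacle.} The superlinear polynomial growth of $g$ precludes a direct appeal to classical global-Lipschitz or linear-growth existence theorems; however, the dissipative structure of the drift---the sign condition coming from $g$ odd and nonnegative on $\bR_+$---turns the nonlinearity into the source of a favorable one-sided bound, so that the $x^2$-Lyapunov estimate delivers non-explosion essentially for free. Beyond this sign property, no finer analysis of $g$ is required, and the bounds on $L,M$ provided by the localization in Section~\ref{sec:reduction} are only needed to secure the local Lipschitz constants in Step~1.
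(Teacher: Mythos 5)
Your argument is correct, and it takes a genuinely different (and somewhat more elementary) route than the paper's. Where you apply It\^o's formula to $(\widetilde{X}^\epsi_\xi)^2$ on the stochastic interval $\llbracket 0,\tau_n\rrbracket$, use the sign condition $x\,g(M x)\geqslant 0$ to discard the dissipative drift term, and then close the estimate by a direct expectation-plus-Chebyshev argument, the paper instead introduces the truncated drift $g^{(n)}$, rewrites the dynamics of $(\widetilde{X}^{\epsi,n})^2$ via L\'evy's characterisation, \emph{compares} the squared process to a squared one-dimensional Bessel process $Y^{\epsi,n}$ using the comparison theorem (Lemma~\ref{lem:app:comparison}), and then applies Doob's maximal inequality to the nonnegative submartingale $Y^{\epsi,n}$. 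Both routes exploit the same one-sided structure of the drift, but you do so directly on the Lyapunov level while the paper pushes the argument onto a dominating Bessel process. The paper's extra machinery (the Bessel comparison) is not gratuitous, however: it is re-used immediately afterwards to establish the uniform moment bound \eqref{eq:proposition:SDE.proof.existence:moment condition} and again in Lemma~\ref{lemma.uniform.moment.bar}, where one needs $\sup_t |\widetilde{X}^\epsi_t|$ to have moments of all orders, not merely pointwise second-moment bounds; your direct Lyapunov estimate, while sufficient for non-explosion, would not deliver those supremum moments as readily. Two small points worth making explicit in your write-up: (a) the statement that the stopped stochastic integrand is bounded on $\llbracket 0,\tau_n\rrbracket$ should also note that $\one_{\{\xi\leqslant\xi^\epsi\}}$ is a legitimate predictable factor so the integral is a genuine $\bQ$-martingale; and (b) the Chebyshev step needs $n$ large enough that $|x_0^\epsi/\epsi|<n$, so that $|\widetilde{X}^\epsi_{\tau_n}|=n$ on $\{\tau_n\leqslant\xi\}$ by continuity.
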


\begin{proof}
For $n\in\bN$, define the bounded function 
\begin{equation*}
g^{(n)}(x)=\text{sgn}(x)\left(\left|g(x)\right|\wedge n\right),
\end{equation*}
and consider the same SDE as above but with truncated drift,
\begin{equation}
\label{eq:truncated.SDE.Delta}
d\widetilde{X}^{\epsi,n}_\xi =-L_{u^\epsi_\xi} g^{(n)}\Big(M_{u^\epsi_\xi}\widetilde{X}^\epsi_\xi\Big) \one_{\left\{\xi\leqslant \xi^\epsi\right\}}d\xi+\one_{\left\{\xi\leqslant \xi^\epsi\right\}}d\widetilde{W}^{\epsi,\bQ}_\xi.
\end{equation}
The function $g^{(n)}$ is Lipschitz continuous with Lipschitz constant $K_n$ (because $g$ is locally Lipschitz and $\{g\leqslant n\}$ is compact). As a consequence, the (random) function $f:(\xi,\omega,x)\mapsto -L_{u^\epsi_\xi}g^{(n)}(M_{u^\epsi_\xi}x)$ satisfies 
\begin{equation*}
\left|f\left(\xi,\omega,x\right)-f\left(\xi,\omega,y\right)\right|\leqslant K_nL_{u^\epsi_\xi}(\omega)M_{u^\epsi_\xi}(\omega)\left|x-y\right|, \quad \mbox{for all $(\xi,\omega,x,y)\in \bR_+\times\Omega\times\bR^2$}.
\end{equation*}
The random variable $K = K_n\sup_{t\in\left[0,T\right]}L_tM_t$ is almost surely finite by continuity of $L$ and $M$ on $[0,T]$. Hence, \cite[Theorem V.7]{protter.05} shows that there exists a unique strong solution of the truncated SDE \eqref{eq:truncated.SDE.Delta}. Define the stopping time
\begin{equation}
\label{eq:st.time.pn}
\xi^{\epsi, n} :=\inf\big\{\xi\in [0,\xi^\epsi]: \big|M_{u^\epsi_\xi}X^{\epsi,n}_\xi\big|\geqslant g^{-1}(n)\big\}\wedge\xi^\epsi,
\end{equation} 
where $g^{-1}(n) = \inf \{x > 0: g(x) > n\}$.
On $\llbracket 0, \xi^{\epsi, n}\rrbracket$, the processes $\widetilde{X}^{\epsi}$ and $\widetilde{X}^{\epsi,n}$ satisfy the same SDE with the same initial condition and are therefore indistinguishable. The squared truncated process $(\widetilde{X}^{\epsi,n}_\xi)^2_{\xi\in\bR_+}$ has dynamics
\begin{align*}
d\big(\widetilde{X}^{\epsi,n}_\xi\big)^2 =&  \Big(1-2L_{u^\epsi_\xi}\widetilde{X}^{\epsi,n}_\xi g^{(n)}\Big(M_{u^\epsi_\xi}\widetilde{X}^{\epsi,n}_\xi\Big)\Big)\one_{\left\{\xi\leqslant \xi^\epsi\right\}}d\xi+2 \sqrt{\big(\widetilde{X}^{\epsi,n}_\xi\big)^2} \text{sgn}\Big(\widetilde{X}^{\epsi,n}_\xi\Big)\one_{\left\{\xi\leqslant \xi^\epsi\right\}}d\widetilde{W}^{\epsi,\bQ}_\xi.
\end{align*}
By L\'evy's characterisation of Brownian motion (cf.~\cite[Theorem 3.3.16]{karatzas.shreve.91}), the process
\begin{equation*}
\widetilde B^{\epsi,\bQ,n}_\xi=\int_{0}^{\xi}\text{sgn}\big(\widetilde{X}^{\epsi,n}_y\big)d\widetilde{W}^{\epsi,\bQ}_y
\end{equation*} 
is a $\bQ$-Brownian motion, stopped at time $\xi^\epsi$. Moreover, $x\mapsto xg^{(n)}(x)$ is an even function. Therefore, we can rewrite the dynamics of the squared truncated process as 
\begin{align*}
d\big(\widetilde{X}^{\epsi,n}_\xi\big)^2 =&  \bigg(1-2L_{u^\epsi_\xi}\sqrt{\big(\widetilde{X}^{\epsi,n}_\xi\big)^2}g^{(n)}\bigg(M_{u^\epsi_\xi}\sqrt{\big(\widetilde{X}^{\epsi,n}_\xi\big)^2}\bigg)\bigg)\one_{\left\{\xi\leqslant \xi^\epsi\right\}}d\xi +2 \sqrt{\big(\widetilde{X}^{\epsi,n}_\xi\big)^2}\one_{\left\{\xi\leqslant \xi^\epsi\right\}}d\widetilde{B}^{\epsi,\bQ,n}_\xi.
\end{align*}
Let $Y^{\epsi,n}$ be the unique strong solution of
\begin{equation*}
	dY^{\epsi,n}_\xi = \one_{\left\{\xi\leqslant \xi^\epsi\right\}}d\xi+2 \sqrt{Y^{\epsi,n}_\xi} \one_{\left\{\xi\leqslant \xi^\epsi\right\}} d\widetilde{B}^{\epsi,\bQ,n}_\xi, \qquad Y^{\epsi,n}_0=(x^\epsi_0)^2/\epsi^2.
\end{equation*}
This process is -- for each $n$ -- the square of a 1-dimensional Bessel process started at $(x^\epsi_0)^2/\epsi^2$ and stopped at time $\xi^\epsi$ (cf.~\cite[Definition XI.1.1]{revuz.yor.99}).  In particular, it is a submartingale that has finite moments at all bounded stopping times -- independent of $n$; see \cite[Chapter~XI]{revuz.yor.99}. The comparison theorem for SDEs in the form of Lemma \ref{lem:app:comparison} yields
\begin{equation*}
\bQ\Big[Y^{\epsi,n}_\xi\geqslant \big(\widetilde{X}^{\epsi,n}_\xi\big)^2, \text{ for all } \xi\in \bR_+ \Big] = 1.
\end{equation*}
Hence, for $y >0$ and $n \in \bN$, by the definition of $\xi^{\epsi, n}$, the above comparison argument, the bound on $M$ and Doob's maximal inequality applied to the non-negative submartingale $Y^{\epsi,n}$, we obtain
\begin{align*}
\bQ\left[\xi^{\epsi,n}< y\wedge\xi^\epsi\right] &\leqslant\bQ\left[\sup\left\{\Big|M_{u^\epsi_\xi}\widetilde{X}^{\epsi,n}_\xi\Big|:~\xi\in\left[0,y\wedge\xi^\epsi\right]\right\} > g^{-1}(n)\right]\\
&\leqslant\bQ\left[\sup\left\{\Big(\widetilde{X}^{\epsi,n}_\xi\Big)^2: \xi\in\left[0,y\wedge\xi^\epsi\right]\right\} > \kappa^{2}g^{-1}(n)^2\right]\\
&\leqslant \bQ\left[\sup\left\{Y^{\epsi,n}_{\xi}: \xi\in\left[0,y \wedge\xi^\epsi\right]\right\} > \kappa^{2}g^{-1}(n)^2\right] \leqslant \frac{\bE_{\bQ}\left[Y^{\epsi,n}_{y\wedge\xi^\epsi}\right]}{\kappa^{2}g^{-1}(n)^2} \leqslant \frac{\text{BES1}(y)}{\kappa^{2}g^{-1}(n)^2} ,
\end{align*}
where $\text{BES1}(y)$ denotes the expectation at time $y$ of the square of a $1$-dimensional Bessel process started at $(x^\epsi_0)^2/\epsi^2$. Letting $n \to \infty$ and using that $\lim_{n \to \infty} g^{-1}(n) = \infty$ shows that for arbitrary $x>0$,
\begin{equation*}
\bQ\left[\lim_{n\to\infty}\xi^{\epsi,n}\wedge x\wedge\xi^\epsi=x\wedge\xi^\epsi\right]=1.
\end{equation*}

Therefore, the solution of \eqref{eq:SDE.proof.existence} exists $\bQ$-a.s.\ on $\bR_+$ and in particular, on $\llbracket 0,\xi^\epsi\rrbracket$. The solution on $\bR_+$ is unique as it coincides $\bQ$-a.s.\ with all the solutions on the smaller interval $\llbracket 0, \xi^{\epsi, n} \rrbracket$. These solutions are unique by global existence and uniqueness for functionally Lipschitz SDEs~\cite[Theorem~V.7]{protter.05}.
\end{proof}

The next result holds for processes as in our main setting ($b$ adapted and locally bounded, $c$, $L$ and $M$ adapted, continuous and positive), provided Assumption~\ref{ass.novikov} is satisfied.

\begin{proposition}
Suppose that Assumption~\ref{ass.novikov} is satisfied. Then for each $k \geq 0$,
\begin{equation}
\label{eq:proposition:SDE.proof.existence:moment condition}
\sup_{s \in [0, t]}\bE\Big[\big|\widetilde{X}^{\epsi}_s\big|^{k}\Big]<\infty, \quad \text{for all } t \geq 0.
\end{equation}
\end{proposition}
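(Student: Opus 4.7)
The plan is to work under the equivalent measure $\bQ$ from \eqref{eq:proba.change.drift.Delta}, where the drift of $\widetilde{X}^\epsi$ reduces to the favourably-signed mean-reverting term, and then transfer the resulting $\bQ$-moment bounds to $\bP$ via H\"older's inequality. The only extra input beyond what has already been used in the proof of Proposition~\ref{proposition:SDE.proof.existence} is the bound $\bE_\bP[\exp(8\int_0^T b_t^2/c_t\,dt)]<\infty$ from Assumption~\ref{ass.novikov}, which will be used to control moments of the Radon--Nikodym derivative $L_T:=d\bP/d\bQ$.

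\emph{Step 1: $\bQ$-moments of $\widetilde{X}^\epsi$.} Fix a positive integer $m$ and apply It\^o's formula to $(\widetilde{X}^\epsi_\xi)^{2m}$ under $\bQ$, using the SDE \eqref{eq:SDE.proof.existence} and the localizing sequence $\tau_N:=\inf\{\xi\geqslant 0:|\widetilde{X}^\epsi_\xi|\geqslant N\}$ to remove the stochastic integral. Since $g$ is odd, non-decreasing and non-negative on $\bR_+$, and $M>0$, the product $xg(Mx)\geqslant 0$ for every $x\in\bR$, so the drift contribution of $-Lg(M\widetilde{X}^\epsi)$ to the expansion of $(\widetilde{X}^\epsi)^{2m}$ is non-positive. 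Taking $\bQ$-expectation therefore leaves only the quadratic-variation term:
\begin{equation*}
\bE_\bQ\bigl[(\widetilde{X}^\epsi_{\xi\wedge\tau_N})^{2m}\bigr]\leqslant(x^\epsi_0/\epsi)^{2m}+m(2m-1)\int_0^\xi\bE_\bQ\bigl[(\widetilde{X}^\epsi_{s\wedge\tau_N})^{2m-2}\bigr]\,ds.
\end{equation*}
An induction on $m$ starting from the trivial case $m=0$ gives a polynomial-in-$\xi$ upper bound that is uniform in $N$; Fatou as $N\to\infty$ then yields $\sup_{s\in[0,t]}\bE_\bQ[|\widetilde{X}^\epsi_s|^{2m}]<\infty$ for every $t\geqslant 0$, and non-integer or odd exponents are handled by Jensen.

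\emph{Step 2: H\"older transfer to $\bP$.} H\"older with conjugate exponents $(3,3/2)$ gives
\begin{equation*}
\bE_\bP[|\widetilde{X}^\epsi_s|^k]=\bE_\bQ[|\widetilde{X}^\epsi_s|^k L_T]\leqslant\bE_\bQ[|\widetilde{X}^\epsi_s|^{3k}]^{1/3}\,\bE_\bQ[L_T^{3/2}]^{2/3},
\end{equation*}
and Step~1 controls the first factor. For the second, observe that $\bE_\bQ[L_T^{3/2}]=\bE_\bP[L_T^{1/2}]$ and use the explicit form $L_T=\mathcal{E}(M)_T\exp(\langle M\rangle_T)$, where $M_t:=\int_0^t b_s/\sqrt{c_s}\,dW_s$ under $\bP$. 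Then $L_T^{1/2}=\mathcal{E}(M)_T^{1/2}\exp(\tfrac{1}{2}\langle M\rangle_T)$ and a single Cauchy--Schwarz step yields
\begin{equation*}
\bE_\bP[L_T^{1/2}]\leqslant\bE_\bP[\mathcal{E}(M)_T]^{1/2}\,\bE_\bP[\exp(\langle M\rangle_T)]^{1/2}.
\end{equation*}
Novikov's condition $\bE_\bP[\exp(\tfrac{1}{2}\langle M\rangle_T)]<\infty$ is implied by Assumption~\ref{ass.novikov}, so $\mathcal{E}(M)$ is a true $\bP$-martingale and the first factor equals $1$; the second factor is finite because $\langle M\rangle_T=\int_0^T b_t^2/c_t\,dt$ and Assumption~\ref{ass.novikov} furnishes even the exponential moment of $8\langle M\rangle_T$.

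The main idea is the sign-based argument in Step~1, which allows the higher $\bQ$-moments of $\widetilde{X}^\epsi$ to be bootstrapped from the lower ones via the quadratic-variation term alone; the subsequent H\"older and change-of-measure manipulations in Step~2 are then routine, with the constant~$8$ in Assumption~\ref{ass.novikov} leaving comfortable slack for the required exponential moments of $\langle M\rangle_T$.
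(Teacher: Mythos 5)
Your proof is correct, and it takes a genuinely different route from the paper's. Where the paper compares $(\widetilde X^\epsi)^2$ to a squared $1$-dimensional Bessel process via the comparison theorem of Lemma~\ref{lem:app:comparison} and then invokes the known moment structure of that reference process, you instead compute the $\bQ$-moments of $\widetilde X^\epsi$ directly: apply It\^o's formula to $(\widetilde X^\epsi)^{2m}$ under $\bQ$, use the sign condition $xg(Mx)\geqslant 0$ to discard the mean-reverting drift term, localize to kill the stochastic integral, and bootstrap moment bounds by induction in $m$. This is more elementary (no appeal to Bessel process theory, and it avoids the Doob maximal inequality step the paper uses) and is arguably better calibrated to the statement, since the proposition only asks for $\sup_{s\in[0,t]}\bE[|\widetilde X^\epsi_s|^k]$, not $\bE[\sup_{s\in[0,t]}|\widetilde X^\epsi_s|^k]$. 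The paper's Bessel comparison, on the other hand, plugs into machinery that is reused elsewhere (e.g.\ Lemma~\ref{lemma.uniform.moment.bar} and Lemma~\ref{lem.max.inequality}), where comparison to an $\epsi$-independent reference diffusion is needed to get bounds uniform in $\epsi$; your direct It\^o argument would not give that uniformity as cleanly, but that is not required for the present proposition. The change-of-measure step in your Step~2 (Cauchy--Schwarz on $L_T^{1/2}=\mathcal{E}(M)_T^{1/2}\exp(\tfrac12\langle M\rangle_T)$, Novikov from Assumption~\ref{ass.novikov}) matches the intent of the paper's H\"older-plus-Novikov step and is carried out correctly.
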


\begin{proof}
This assertion follows by a similar argument as in the proof of Proposition~\ref{proposition:SDE.proof.existence}. Here, we compare $(\widetilde{X}^{\epsi})^2$ to the unique strong solution of the SDE
\begin{equation*}
dY^{(\epsi)}_\xi = \one_{\left\{\xi\leqslant \xi^\epsi\right\}}d\xi+2 \sqrt{Y^{(\epsi)}_\xi}\one_{\left\{\xi\leqslant \xi^\epsi\right\}}d\widetilde{B}^{\epsi,\bQ}_\xi, \qquad Y^{(\epsi)}_0=(x_0^\epsi)^2/\epsi^2,
\end{equation*}
where 
\begin{equation*}
\widetilde B^{\epsi,\bQ}_\xi=\int_{0}^{\xi}\text{sgn}\big(\widetilde{X}^{\epsi}_y\big)d\widetilde{W}^{\epsi,\bQ}_y
\end{equation*} 
is a $\bQ$-Brownian motion stopped at $\xi^\epsi$. We then use Doob's maximal inequality and that the square of the 1-dimensional Bessel process is a submartingale and has finite moments of all orders at all finite times \cite[Chapter~XI]{revuz.yor.99}. Combined with H\"older inequality and Assumption~\ref{ass.novikov}, this in turn yields \eqref{eq:proposition:SDE.proof.existence:moment condition} and thereby completes the proof.
\end{proof}

\section{Proofs of Theorems~\ref{theorem:up} and \ref{theorem} for Bounded Coefficients}
\label{sec:proofs}

We now turn to the proof of our main results, Theorems~\ref{theorem:up} and \ref{theorem}. By the localization argument from Section~\ref{sec:reduction}, we can and will assume throughout without loss of generality that there is $\kappa \in (0,1)$ such that 
\begin{equation}
\label{eq:ass kappa}
\int_0^T \frac{b_u^2}{c_u} du \leqslant \frac{1}{\kappa}, H_t \leqslant \frac{1}{\kappa}, K_t \leqslant \frac{1}{\kappa}, c_t  \in \left[\kappa, \frac{1}{\kappa}\right], L_t \in \left[\kappa, \frac{1}{\kappa}\right], M_t \in \left[\kappa, \frac{1}{\kappa}\right], \quad t\in [0, T].
\end{equation}
Under \eqref{eq:ass kappa}, Assumptions~\ref{ass.novikov} and \ref{ass.integrability} are trivially satisfied. Thus we can appeal to the results from the appendices for processes with such bounded coefficients and, after this localization, Theorem~\ref{theorem:up} is simply a corollary of Theorem~\ref{theorem} because $\mathcal{S}^p$ convergence implies uniform convergence in probability. In order to prove Theorem~\ref{theorem:up} for general coefficients, it therefore remains to establish Theorem~\ref{theorem} with bounded coefficients as in \eqref{eq:ass kappa}. To complete the proof of Theorem~\ref{theorem} for general coefficients, it additionally remains to establish Lemma~\ref{lem:U.I} to ensure the uniform integrability required for the localization argument from Section~\ref{sec:reduction}, and Lemma~\ref{lem:app:integrability estimate} to ensure the integrability of the limit.

Furthermore, we assume in the following that the function $f$ is nondecreasing on $\bR_+$. This is without loss of generality: since $f$ is of finite variation on every compact of $\bR$ and even, it can be written as the difference of two non-decreasing and even functions, $f_1$ and $f_2$. Then, applying Theorem~\ref{theorem:up} and \ref{theorem} to $f_1$ and $f_2$ yields the results for $f$. Finally, the result still holds after addition of a constant to $f$; we will therefore assume without loss of generality that $f$ is positive on $\bR$.

\subsection{Local estimation} 
\label{sec:local estimation}
We start by estimating the integral on the left-hand side of \eqref{eq:thm:up} ``locally'', i.e., on the intervals $[t, t+\epsi]$ for $t \in [0, T)$ and $\epsi > 0$ sufficiently small.\footnote{Note that the length of the interval could alternatively be taken equal to $\epsi^{r}$, for any $r\in(0,2)$, in which case \eqref{eq:local estimate} is modified to $\epsi^{-r}\int_{t}^{t+\epsi^r} H_s f \bigg(\frac{K_s X^\epsi_s}{\epsi} \bigg) ds$. We choose $r=1$ to obtain the simplest formulas.} More precisely, we study the limit of the normalised integral
\begin{equation}
\label{eq:local estimate}
\epsi^{-1}\int_{t}^{t+\epsi} H_s f \bigg(\frac{K_s X^\epsi_s}{\epsi} \bigg) ds.
\end{equation}
To this end, we proceed in three steps. First, we rescale and time change the process $X^\epsi$ as in Section~\ref{section.proof.ex} and also use some stopping arguments to bound the integral \eqref{eq:local estimate} from above and from below by expressions only involving the rescaled and time-changed process $\widetilde X^\epsi$ from \eqref{eq:relation.X} and $\cF_t$-measurable random variables. In a second step we approximate those $\cF_t$-measurable random variables by elementary random variables. In a final step, we use ergodic theorems for one-dimensional diffusions to compute the ``local limits'' \eqref{eq:local estimate}.
\medskip

\paragraph{Step 1: Stopping and time change.} 
In order to keep the ``slowly-varying'' processes $c$, $H$, $K$, $L$, and $M$ in a small interval around their values at time $t$, we define for fixed $\epsi \in (0, \frac{\kappa^2(T-t)}{4})$
and $\delta \in (0, \tfrac{\kappa}{2})$ the stopping time
\begin{align}
\tau^{\epsi,\delta}_t &= \inf\bigg\{s\in\left[t,t+\epsi\right]:~ \frac{H_s}{c_s}\notin\bigg[\frac{H_t(1-\delta)}{c_t},\frac{H_t + \delta}{c_t}\bigg]\!, c_s\notin\left[c_t(1-\delta), c_t(1+\delta)\right], \notag\\
&\quad L_s\notin\left[L_t - \delta,L_t+\delta\right]\!, M_s\notin\left[M_t - \delta,M_t+\delta\right]\!, K_s\notin\left[K_t (1-\delta),K_t+\delta\right] \bigg\}\wedge\left(t+\epsi\right). \label{tau.epsi.st.time}
\end{align}
By uniform continuity of $c, H, K, L, M$ on $[0,T]$, there exists a random variable $\epsi_{\delta}>0$ (which is independent of $t$) such that, for $\epsi \in (0, \frac{\kappa^2(T-t)}{4})$, we have  
\begin{equation*}
\tau^{\epsi,\delta}_t= t+ \epsi \quad \mbox{on $\{0 < \epsi \leqslant \epsi_{\delta}\}$}.
\end{equation*}
It follows that for $\epsi \in (0, \frac{\kappa^2(T-t)}{4})$,
\begin{align}
\label{stopped.cost.tau.1}
\epsi^{-1}\int_{t}^{t + \epsi}H_sf\left(\frac{K_s X^\epsi_{s}}{\epsi}\right)ds=\epsi^{-1}\int_{t}^{\tau^{\epsi,\delta}_t}H_sf\left(\frac{K_s X^\epsi_{s}}{\epsi}\right)ds \quad \text{on } \{0 < \epsi \leqslant \epsi_\delta\}.
\end{align}
We proceed to study the integral on the right-hand side of \eqref{stopped.cost.tau.1}. To this end, we pass to time-changed quantities as in Section \ref{section.proof.ex}, with the difference that we start time at $t$. So set
\begin{align*}
\xi^{\epsi}_t = \int_{t}^{T} \epsi^{-2} c_s ds, \quad \xi^{\epsi,\delta}_t &= \int_{t}^{\tau^{\epsi,\delta}_t} \epsi^{-2} c_s ds, \quad u_\xi^{\epsi,t} = u^\epsi_{\xi+\int_{0}^{t} \epsi^{-2} c_s ds}, \quad \widetilde{X}^{\epsi,t}_{\xi} = X^\epsi_{u_\xi^{\epsi,t}}/\epsi.
\end{align*}  
Here, $\xi^{\epsi}_t $ and $\xi^{\epsi,\delta}_t$ denote the lengths of the intervals $[t, T]$ and $[t, \tau^{\epsi,\delta}_t]$ after the time change,  $u_\xi^{\epsi,t}$ is the family of stopping times introduced in Lemma \ref{lem:time.change}, shifted to start at the time change of $t$, and $\widetilde{X}^{\epsi,t}$ denotes the rescaled and time-changed process $\widetilde{X}^\epsi$ restarted at the time change of $t$.
Note that for $\epsi \in (0, \frac{\kappa^2(T-t)}{4})$, 
\begin{equation}
\label{eq:bound:xi epsi delta stopping}
\epsi^{-1} c_t (1 - \delta)  \leqslant \xi^{\epsi,\delta}_t  \leqslant \epsi^{-1}  c_t (1 + \delta) \quad \text{on } \{0 < \epsi \leqslant \epsi_{\delta}\}.
\end{equation}
Moreover, note that since $c \in [\kappa, \kappa^{-1}]$ and $\delta\in(0,1)$, we have $\xi^\epsi_t \geqslant \epsi^{-2} \kappa (T - t)$ and $2\epsi^{-1} c_t (1 + \delta) \leqslant 4 \epsi^{-1}/\kappa$. Together with $\kappa(T-t) \geqslant 4 \kappa^{-1} \epsi$, this yields
\begin{equation}
\label{eq:bound:xi epsi delta global}
2\epsi^{-1}  c_t (1 + \delta) \leqslant \xi^{\epsi}_t.
\end{equation}
Hence, even though $\epsi^{-1}  c_t (1 + \delta)$ might be larger than $\xi^{\epsi, \delta}_t$ with positive probability, it is always smaller than the remaining time to the time horizon $\xi^\epsi_t$ after the time change. By considering $\widetilde{X}^{\epsi,t}$ on the interval $[0, \xi^{\epsi,\delta}_t]$, we can now separate the quickly-oscillating displacement from the other, more slowly-varying processes in the estimation:

\begin{lemma}
\label{lem:sandwich}
Let $t \in [0, T)$, $\epsi \in (0, \frac{\kappa^2(T-t)}{4})$ and $\delta \in (0, \tfrac{\kappa}{2})$. Then:
\begin{equation}
\begin{split}
\one_{\{\epsi \leqslant \epsi_\delta\}}&\left(1-\delta\right)^2 H_t\Bigg(\frac{1}{\epsi^{-1} c_t(1 - \delta)}\int_{0}^{\epsi^{-1} c_t (1 - \delta)}f\left(K_t (1 - \delta) \widetilde{X}^{\epsi,t}_{\xi}\right) d\xi\Bigg) \\ 
&\leqslant \one_{\{\epsi \leqslant \epsi_\delta\}} \epsi^{-1}\int_{t}^{t+\epsi}H_rf\bigg(\frac{K_r X^\epsi_{r}}{\epsi}\bigg)dr \\
&\leqslant \one_{\{\epsi \leqslant \epsi_\delta\}} \left(1+\delta\right)(H_t +\delta)\Bigg(\frac{1}{\epsi^{-1} c_t(1 + \delta)}\int_{0}^{\epsi^{-1} c_t(1 +\delta)}f\left((K_t + \delta)\widetilde{X}^{\epsi,t}_{\xi}\right) d\xi\Bigg).
\end{split}
\label{eq:sandwich1bis}
\end{equation}
\end{lemma}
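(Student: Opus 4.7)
I would prove both inequalities by a three-step procedure: a time change that pulls the quickly oscillating process $\widetilde X^{\epsi,t}$ out of the integrand, a pointwise sandwich of the remaining ``slow'' factors on the stopped interval, and finally an enlargement/shrinkage of the range of integration using the bounds \eqref{eq:bound:xi epsi delta stopping} and \eqref{eq:bound:xi epsi delta global}.

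For the first step, I would use that on $\{\epsi \leqslant \epsi_\delta\}$ we have $\tau^{\epsi,\delta}_t = t+\epsi$, so that the identity \eqref{stopped.cost.tau.1} applies and the time change $r = u_\xi^{\epsi,t}$ (with $dr = \epsi^2/c_{u_\xi^{\epsi,t}}\,d\xi$ by Lemma~\ref{lem:time.change}) rewrites the left-hand side of interest as
\[
\epsi^{-1}\int_{t}^{t+\epsi}H_r f\Bigl(\tfrac{K_r X^\epsi_r}{\epsi}\Bigr)dr = \epsi \int_{0}^{\xi^{\epsi,\delta}_t}\frac{H_{u_\xi^{\epsi,t}}}{c_{u_\xi^{\epsi,t}}}\, f\bigl(K_{u_\xi^{\epsi,t}}\,\widetilde X^{\epsi,t}_\xi\bigr)\, d\xi.
\]
For the second step, I would exploit the definition \eqref{tau.epsi.st.time} of $\tau^{\epsi,\delta}_t$: on $[0,\xi^{\epsi,\delta}_t]$ the quotient $H_{u_\xi^{\epsi,t}}/c_{u_\xi^{\epsi,t}}$ lies in $[H_t(1-\delta)/c_t,(H_t+\delta)/c_t]$ and $K_{u_\xi^{\epsi,t}}$ lies in $[K_t(1-\delta), K_t+\delta]$. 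Using the WLOG reductions made at the beginning of Section~\ref{sec:proofs} (so that $f$ is even, non-decreasing on $\bR_+$, and non-negative), the map $a\mapsto f(a\widetilde X^{\epsi,t}_\xi)$ is non-decreasing in $a\geqslant 0$, which yields the pointwise sandwich
\[
\frac{H_t(1-\delta)}{c_t}\, f\bigl(K_t(1-\delta)\,\widetilde X^{\epsi,t}_\xi\bigr) \leqslant \frac{H_{u_\xi^{\epsi,t}}}{c_{u_\xi^{\epsi,t}}}\, f\bigl(K_{u_\xi^{\epsi,t}}\,\widetilde X^{\epsi,t}_\xi\bigr) \leqslant \frac{H_t+\delta}{c_t}\, f\bigl((K_t+\delta)\,\widetilde X^{\epsi,t}_\xi\bigr).
\]

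In the third step, I would adjust the range of $\xi$-integration. For the lower bound, \eqref{eq:bound:xi epsi delta stopping} gives $\xi^{\epsi,\delta}_t \geqslant \epsi^{-1} c_t(1-\delta)$, so shrinking the integration domain to $[0,\epsi^{-1} c_t(1-\delta)]$ only decreases the non-negative minorant. For the upper bound, \eqref{eq:bound:xi epsi delta stopping} gives $\xi^{\epsi,\delta}_t \leqslant \epsi^{-1} c_t(1+\delta)$, and enlarging to $[0,\epsi^{-1} c_t(1+\delta)]$ increases the non-negative majorant. Collecting the constant prefactors, $\epsi\cdot\tfrac{H_t(1-\delta)}{c_t} = (1-\delta)^2 H_t \cdot \tfrac{1}{\epsi^{-1} c_t(1-\delta)}$ and $\epsi\cdot\tfrac{H_t+\delta}{c_t} = (1+\delta)(H_t+\delta)\cdot\tfrac{1}{\epsi^{-1} c_t(1+\delta)}$, reproduces the two expressions in the lemma statement.

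The only subtlety, and what I would regard as the main (small) obstacle, is the legitimacy of enlarging the integration range in the upper bound: once $\xi$ exceeds $\xi^{\epsi,\delta}_t$ we no longer control the slow processes, but we are only estimating the fast process $\widetilde X^{\epsi,t}_\xi$ against itself, which is fine provided $\widetilde X^{\epsi,t}$ is defined on $[0,\epsi^{-1}c_t(1+\delta)]$. This is exactly what \eqref{eq:bound:xi epsi delta global} guarantees, since it ensures $\epsi^{-1}c_t(1+\delta) \leqslant \xi^{\epsi}_t$, i.e., the enlarged interval lies within the full lifespan of $\widetilde X^{\epsi,t}$. Once this observation is in place, the rest of the argument is just monotone accounting.
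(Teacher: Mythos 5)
Your proof is correct and follows essentially the same three-step route as the paper: the stopping-time identity \eqref{stopped.cost.tau.1} and the time change $r = u^{\epsi,t}_\xi$ to pass from the $r$-integral to the $\xi$-integral over $[0,\xi^{\epsi,\delta}_t]$; the pointwise sandwich of $H/c$ and $K$ by their frozen values coming from the definition \eqref{tau.epsi.st.time} of $\tau^{\epsi,\delta}_t$, combined with the reductions that $f$ is even, non-decreasing on $\bR_+$ and non-negative; and the monotone adjustment of the integration range via \eqref{eq:bound:xi epsi delta stopping}. Your appeal to \eqref{eq:bound:xi epsi delta global} to justify the enlarged upper range is not strictly needed for this lemma, because $\widetilde X^{\epsi,t}$ is defined for all $\xi\geqslant 0$ (being constant after $\xi^\epsi_t$); the real reason the paper records \eqref{eq:bound:xi epsi delta global} (and its refinement \eqref{eq:bound:xi epsi delta global:approx}) is that later, in Lemma~\ref{lem:sandwich:2} and the ergodic step, one needs $\widetilde X^{\epsi,t}$ to actually satisfy its SDE (not be frozen) on the enlarged interval. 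So while your observation is in the right spirit, its role comes into play in the subsequent lemmas rather than here.
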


\begin{proof}
On $\{0<\epsi\leqslant\epsi_{\delta}\}$,  \eqref{stopped.cost.tau.1} and the time change $s=u^{\epsi,t}_\xi$ give
\begin{align*}
\epsi^{-1}\int_{t}^{t+\epsi}H_s f\left(\frac{K_s X^\epsi_{s}}{\epsi}\right)ds= \epsi^{-1}\int_{t}^{\tau^{\epsi,\delta}_t}H_s f\left(\frac{K_s X^\epsi_{s}}{\epsi}\right)  dr=\epsi\int_{0}^{\xi^{\epsi,\delta}_t}\frac{H_{u^{\epsi,t}_\xi}}{c_{u^{\epsi,t}_\xi}}f\Big(K_{u^{\epsi,t}_\xi}\widetilde{X}^{\epsi,t}_{\xi}\Big)d\xi.
\end{align*}
Now, \eqref{eq:sandwich1bis} follows by using that, by definition of $\tau_t^{\epsi,\delta}$, 
\begin{equation*}
\frac{H_{u^{\epsi,t}_\xi}}{c_{u^{\epsi,t}_\xi}} \in\bigg[(1-\delta)\frac{H_t}{c_t},\frac{H_t + \delta}{c_t}\bigg], \quad \text{and} \quad K_{u^{\epsi,t}_\xi} \in [K_t(1-\delta), K_t + \delta], 
\quad \xi \in [0, \xi^{\epsi, \delta}_t],
\end{equation*}
and also taking into account \eqref{eq:bound:xi epsi delta stopping}, and that the function $f$ is even and non decreasing on $\bR_+$.
\end{proof}

\paragraph{Step 2: Approximation by elementary random variables.}  We now turn to the estimation of the terms that appear in the bounds from Lemma~\ref{lem:sandwich}. To this end, we approximate $K_t$ and $c_t$ by elementary $\cF_t$-measurable random variables and the SDE for $\widetilde{X}^{\epsi,t}$ by an SDE with coefficients that are constant over time and elementary $\cF_t$-measurable random variables.

To this end, for 
$n \in \bN$ with $n \geqslant\frac{2}{\kappa} $, $\Upsilon_t \in \{c_t, L_t, M_t, K_t\}$,  and $i \in \bN$ set
\begin{equation}
\label{eq:Omega:Upsilon}
\Omega^{i, \Upsilon_t,n} := 
\left\{\frac{i}{n}\leqslant \Upsilon_t<\frac{(i+1)}{n}\right\},
\end{equation}
and define the random variables $c_t^{n,+}, c_t^{n,-}, L_t^{n,+}, L_t^{n,-}, M_t^{n,+}, M_t^{n,-}, K_t^{n,+}, K_t^{n,-}$ for $n \in \bN$ with $n \geqslant \frac{2}{\kappa} $ by 
\begin{align}\label{eq:proc.approx}
\Upsilon^{n,+}_t =\sum_{i=0}^{\infty}\frac{i+1}{n} \one_{\{\Omega^{i,\Upsilon_t,n}\}} \quad \text{and} \quad \Upsilon^{n,-}_t = \sum_{i=0}^{\infty}\frac{i}{n}\one_{\{\Omega^{i,\Upsilon_t,n}\}},
\end{align}
where $\Upsilon \in \{c, L, M, K\}$. Note that, for fixed $n \geqslant \frac{2}{\kappa}$, and for $\delta\in\left(0,\frac{\kappa}{2}\right)$,
\begin{equation}
\label{eq:Omega:Upsilon:condition}
\text{$\bP\left[\Omega^{i, \Upsilon_t,n}\right] = 0$ for $\Upsilon_t \in \{c_t, L_t, M_t\}$ and $i \in \bN$ with $\frac{i}{n} \leqslant \delta$}
\end{equation}
because $c, L, M \geqslant \kappa$.\footnote{Note that $P[\Omega^{0, K_t,n}]$ may be positive as $K$ is only nonnegative.} This implies that $c_t^{n,+}, c_t^{n,-}, L_t^{n,+}, L_t^{n,-}, M_t^{n,+}, M_t^{n,-} > \delta$.
Moreover, note that for each $n \in \bN$ with $n \geqslant \frac{2}{\kappa} $ by the fact that $c_t \geqslant \kappa$,
\begin{equation*}
c^{n,+}_t \leqslant c_t + \frac{1}{n} \leqslant c_t + \frac{\kappa}{2} \leqslant 2 c_t.
\end{equation*}
Together with \eqref{eq:bound:xi epsi delta global}, this yields the important estimate
\begin{equation}
\label{eq:bound:xi epsi delta global:approx}
\epsi^{-1} c^{n,+}_t (1 + \delta) \leqslant \xi^{\epsi}_t.
\end{equation}
By construction,
\begin{equation}
\label{eq:Upsilon:approximation}
\Upsilon^{n,-}_t \leqslant \Upsilon_t \leqslant \Upsilon^{n,+}_t, \quad n \geqslant \frac{2}{\kappa}, \quad \text{and} \quad \lim_{n \to \infty} \Upsilon^{n,-}_t = \Upsilon_t = \lim_{n \to \infty} \Upsilon^{n,+}_t, \quad \Upsilon \in \{c, L, M, K\},
\end{equation}
and we have for $\Upsilon_t \in \{c_t, L_t, M_t\}$,
\begin{equation}
\label{eq:union.Omegas}
	\bigcup_{i=1}^{\infty}\Omega^{i, \Upsilon_t,n} =\Omega~~\mbox{ and }~~\bigcup_{i=0}^{\infty}\Omega^{i, K_t,n} =\Omega.
\end{equation}

We proceed to approximate $\widetilde{X}^{\epsi,t}$ (or more precisely $(\widetilde{X}^{\epsi,t})^2$). It follows from \eqref{eq:SDE.proof.existence} that the process $\widetilde{X}^{\epsi,t}$ satisfies on $\llbracket 0,\xi^{\epsi}_t\rrbracket$ the SDE
\begin{equation*}
d\widetilde{X}^{\epsi,t}_\xi = -L_{u^{\epsi,t}_\xi}g\Big(M_{u^{\epsi,t}_\xi}\widetilde{X}^{\epsi,t}_\xi\Big) d\xi+d\widetilde{W}^{\epsi,\bQ,t}_\xi, \quad \widetilde{X}^{\epsi,t}_0 = X^\epsi_t/\epsi,
\end{equation*}
where $\widetilde{W}^{\epsi,\bQ,t}_\xi := \widetilde{W}^{\epsi, \bQ}_{\xi+\xi^{\epsi,t}_0} - \widetilde{W}^{\epsi,  \bQ}_{\xi^{\epsi,t}_0} $ is the $\bQ$-Brownian motion from \eqref{eq:What:Q} restarted at $\xi^{\epsi,t}_0 =\int_0^t \epsi^{-2} c_s ds\leqslant \xi^\epsi$. 
Define the process $B^{\epsi,\bQ,t}$ by 
\begin{equation*}
B^{\epsi,\bQ,t}_\xi=\int_{0}^{\xi}\text{sgn}(\widetilde{X}^{\epsi,t}_y)d\widetilde{W}^{\epsi,\bQ,t}_y, \quad \xi\geqslant 0.
\end{equation*}
By L\'evy's characterisation \cite[Theorem 3.3.16]{karatzas.shreve.91}, this is a Brownian motion, stopped at $\xi^\epsi_t$. As the function $g$ is odd, It\^o's formula gives
\begin{align*}
d\Big(\widetilde{X}^{\epsi,t}_\xi\Big)^2 =&  \left(1-2L_{u^{\epsi,t}_\xi}\sqrt{\Big(\widetilde{X}^{\epsi,t}_\xi\Big)^2} g\left(M_{u^{\epsi,t}_\xi}\sqrt{\Big(\widetilde{X}^{\epsi,t}_\xi\Big)^2}\right)\right) \one_{\{\xi\leqslant\xi^{\epsi}_t\}}d\xi+2\sqrt{\Big(\widetilde{X}^{\epsi,t}_\xi\Big)^2} \one_{\{\xi\leqslant\xi^{\epsi}_t\}} dB^{\epsi,\bQ,t}_\xi.
\end{align*}
To bound $(\widetilde{X}^{\epsi,t}_\xi)^2$ from above and from below, we proceed as follows. For $\epsi > 0$, $\delta > 0$ and constants $l, m > \delta$ (which are independent of $\epsi$) and an $\cF_t$-measurable initial value $y \geqslant 0$ (which may depend on $\epsi$), let $Y^{y,l,m,\epsi,\delta,+}$, and $Y^{y,l,m,\epsi,\delta,-}$ be the unique strong solutions of the following two SDEs:
\begin{align}
dY^{y,l,m,\epsi,\delta,\pm}_\xi &= \left(1-2\left(l \mp \delta\right)\sqrt{Y^{y,l,m,\epsi,\delta, \pm}_\xi}g\left(\left(m \mp \delta\right)\sqrt{Y^{y,l,m,\epsi,\delta,\pm}_\xi}\right)\right) \one_{\{\xi\leqslant\xi^{\epsi}_t\}}  d\xi \notag \\ 
&\qquad\qquad +2\sqrt{Y^{l,m,\epsi,\delta,\pm}_\xi} \one_{\{\xi\leqslant\xi^{\epsi}_t\}} dB^{\epsi,\bQ,t}_\xi, \quad Y^{y,l,m,\epsi,\delta,\pm}_0 =y \label{sde_y.epsilon.-}.
\end{align}
Existence and uniqueness of strong solutions for \eqref{sde_y.epsilon.-} follows from Lemma \ref{lem:app:existence:SDE}.\footnote{More precisely, the solution of \eqref{sde_y.epsilon.-} corresponds to a solution of \eqref{eq:lem:app:existence:SDE}, stopped at time $\xi^\epsi_t$.}
Note that the SDEs \eqref{sde_y.epsilon.-} depend on $\epsi$ only via the Brownian motion $B^{\epsi,\bQ,t}$ and their starting value $y$.

Moreover, for an $\cF_t$-measurable random variable $y \geqslant 0$ and each $n \in \bN\setminus \{0\}$ define the continuous semimartingales $(Y^{y,L^{n,+}_t,M^{n,+}_t,\epsi,\delta,-}_\xi)_{\xi \geq0}$ and $(Y^{y,L^{n,-}_t,M^{n,-}_t,\epsi,\delta,+}_\xi)_{\xi \geq0}$ by
\begin{align}
\label{eq:Y lower}
Y^{y,L^{n,+}_t,M^{n,+}_t,\epsi,\delta,-}_\xi &= \sum_{i \geqslant 0} \sum_{j \geqslant 0} Y^{y,(i+1)/n,(j+1)/n,\epsi,\delta,-}_\xi \one_{\Omega^{i,L_t,n}}\one_{\Omega^{j,M_t,n}}, \\
Y^{y,L^{n,-}_t,M^{n,-}_t,\epsi,\delta,+}_\xi &= \sum_{i \geqslant 0} \sum_{j \geqslant 0} Y^{y,i/n,j/n,\epsi,\delta,+}_\xi \one_{\Omega^{i,L_t,n}}\one_{\Omega^{j,M_t,n}}.
\label{eq:Y upper}
\end{align}

Now \eqref{eq:Upsilon:approximation}, the definition of $\tau^{\epsi, \delta}_t$ in \eqref{tau.epsi.st.time}, the assumption that $g$ is non-decreasing and the comparison theorem for SDEs in the form of Lemma \ref{lem:app:comparison} give, for each $n \in \bN$ with $n \geqslant \frac{2}{\kappa}$ and all $i, j\in\bN$,
\begin{align*}
&\bP\bigg[Y^{Y^\epsi_0,(i+1)/n,(j+1)/n,\epsi,\delta,-}_\xi \one_{\Omega^{i,L_t,n}} \one_{\Omega^{j,M_t,n}} \leqslant \left(\widetilde{X}^{\epsi,t}_{\xi}\right)^2 \one_{\Omega^{i,L_t,n}} \one_{\Omega^{j,M_t,n}} \\
&\qquad \qquad\qquad\leqslant Y^{Y^\epsi_0,i/n,j/n,\epsi,\delta,+}_\xi  \one_{\Omega^{i,L_t,n}} \one_{\Omega^{j,M_t,n}} \text{ for all } 0\leqslant\xi \leqslant \xi^{\epsi,\delta}_t \bigg] = 1,
\end{align*}
where 
\begin{equation*}
Y^\epsi_0 := \left(\widetilde{X}^{\epsi,t}_0\right)^2 = (X^\epsi_t)^2/\epsi^2.
\end{equation*}
Together with \eqref{eq:Y lower} and \eqref{eq:Y upper}, \eqref{eq:Omega:Upsilon}, \eqref{eq:Omega:Upsilon:condition} and \eqref{eq:union.Omegas}, this yields
\begin{align}
\label{eq:comparison}
\bP\left[Y^{Y^\epsi_0,L^{n,+}_t,M^{n,+}_t,\epsi,\delta,-}_\xi\leqslant \left(\widetilde{X}^{\epsi,t}_{\xi}\right)^2 \leqslant Y^{Y^\epsi_0,L^{n,-}_t,M^{n,-}_t,\epsi,\delta,+}_\xi \text{ for all } 0\leqslant\xi\leqslant\xi^{\epsi,\delta}_\xi \right] = 1.
\end{align}

To simplify the notation in the subsequent results, define for constants $c, k, l, m$ (independent of $\epsi$) with $c, l, m > \delta$ and an $\cF_t$-valued random variable $y$ (which may depend on $\epsi$) the following two random variables:
\begin{align*}
v^{\epsi, \delta, +}_y(c,k,l,m) &:= \frac{1}{\epsi^{-1} c (1 + \delta)}\int_{0}^{\epsi^{-1} c (1 + \delta)}f\left((k + \delta) \sqrt{Y^{y,l,m,\epsi,\delta,+}_\xi}\right) d\xi, \\
v^{\epsi, \delta, -}_y(c,k,l,m) &:= \frac{1}{\epsi^{-1} c (1 - \delta)}\int_{0}^{\epsi^{-1} c (1 - \delta)}f\left(k (1 - \delta) \sqrt{Y^{y,l,m,\epsi,\delta,-}_\xi}\right) d\xi.
\end{align*}
Note that by \eqref{eq:bound:xi epsi delta global}, $v^{\epsi, \delta, +}_y(c,k,l,m)$ and $v^{\epsi, \delta, -}_y(c,k,l,m)$  are $\cF_T$-measurable for $c\leqslant 1/\kappa$. Moreover, by comparison of SDEs in their initial values, they are non decreasing in $y$ (see Lemma~\ref{lem:app:comparison} and the proof of Lemma~\ref{lem:app:existence:SDE}). 

Now combining \eqref{eq:comparison} and \eqref{eq:Upsilon:approximation} with Lemma~\ref{lem:sandwich} and the fact that $f$ is even and nondecreasing on $\bR_+$ yields the following result.\footnote{Note that compared to Lemma~\ref{lem:sandwich}, the processes in the upper and lower bounds are replaced by the simpler approximating diffusions introduced in (\ref{sde_y.epsilon.-}) here and the frozen coefficients are approximated by finitely many values, as $c, K, L$ and $M$ are bounded from above.}

\begin{lemma}
	\label{lem:sandwich:2}
	Let $t \in [0, T)$, $\epsi \in (0, \frac{\kappa^2(T-t)}{4})$, $\delta \in (0, \tfrac{\kappa}{2})$, and $n \in \bN \setminus \{0\}$. Then:
	\begin{equation}	\label{eq:sandwich1}
	\begin{split}
&\one_{\{\epsi \leqslant \epsi_\delta\}}\left(1-\delta\right)^2 H_t \frac{c^{n,-}_t}{c^{n,+}_t} v^{\epsi, \delta, -}_{Y^\epsi_0}(c^{n,-}_t,K^{n,-}_t,L^{n,+}_t,M^{n,+}_t)   \\ 
	&\qquad \leqslant \one_{\{\epsi \leqslant\epsi_\delta\}} \epsi^{-1}\int_{t}^{t+\epsi}H_rf\left(\frac{K_r X^\epsi_{r}}{\epsi}\right)dr \\
	&\qquad \leqslant \one_{\{\epsi \leqslant \epsi_\delta\}} \left(1+\delta\right)(H_t +\delta)\frac{c^{n,+}_t}{c^{n,-}_t} v^{\epsi, \delta, +}_{Y^\epsi_0}(c^{n,+}_t,K^{n,+}_t,L^{n,-}_t,M^{n,-}_t),
	\end{split}
	\end{equation}
	where $c^{n,+}$, $c^{n,-}$, $K^{n,+}$, $K^{n,- }$, $L^{n,+}$, $L^{n,- }$, $M^{n,+}$, $M^{n,- }$ are defined as in \eqref{eq:proc.approx}.
\end{lemma}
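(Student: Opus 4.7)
The plan is to refine the bounds from Lemma \ref{lem:sandwich} by two substitutions: first, replacing the frozen coefficients $c_t$ and $K_t$ by their elementary $\cF_t$-measurable approximations $c^{n,\pm}_t$ and $K^{n,\pm}_t$ from \eqref{eq:proc.approx}; and second, replacing the quickly oscillating process $\widetilde{X}^{\epsi,t}$ by the comparison diffusions $\sqrt{Y^{Y^\epsi_0,L^{n,\mp}_t,M^{n,\mp}_t,\epsi,\delta,\pm}}$ via \eqref{eq:comparison}. The key monotonicity properties driving every step are that $f$ is even, nonnegative and nondecreasing on $\bR_+$, so that $f(a x) = f(a|x|)$ for $a>0$ and the monotonicity transfers to $f$ evaluated along the comparison processes.

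For the upper bound, I would return to the time-changed representation $\epsi \int_0^{\xi^{\epsi,\delta}_t} \frac{H_{u^{\epsi,t}_\xi}}{c_{u^{\epsi,t}_\xi}} f(K_{u^{\epsi,t}_\xi} \widetilde{X}^{\epsi,t}_\xi)\,d\xi$ from the proof of Lemma \ref{lem:sandwich}. On $\{\epsi \leqslant \epsi_\delta\}$ and $\xi \in [0, \xi^{\epsi,\delta}_t]$, the definition of $\tau^{\epsi,\delta}_t$ together with $K_t \leqslant K^{n,+}_t$ yields $H_{u^{\epsi,t}_\xi}/c_{u^{\epsi,t}_\xi} \leqslant (H_t+\delta)/c_t$ and $K_{u^{\epsi,t}_\xi} \leqslant K^{n,+}_t+\delta$, while \eqref{eq:comparison} gives $|\widetilde{X}^{\epsi,t}_\xi| \leqslant \sqrt{Y^{Y^\epsi_0,L^{n,-}_t,M^{n,-}_t,\epsi,\delta,+}_\xi}$; the monotonicity of $f$ on $\bR_+$ then upper-bounds the integrand by $f((K^{n,+}_t+\delta)\sqrt{Y^{Y^\epsi_0,L^{n,-}_t,M^{n,-}_t,\epsi,\delta,+}_\xi})$. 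Next, I would extend the upper limit of integration from $\xi^{\epsi,\delta}_t$ up to $\epsi^{-1}c^{n,+}_t(1+\delta)$, which is legitimate because $f \geqslant 0$, \eqref{eq:bound:xi epsi delta stopping} provides $\xi^{\epsi,\delta}_t \leqslant \epsi^{-1}c_t(1+\delta) \leqslant \epsi^{-1}c^{n,+}_t(1+\delta)$, and \eqref{eq:bound:xi epsi delta global:approx} guarantees that $Y^{\cdot,\cdot,\cdot,\epsi,\delta,+}$ is defined on the extended interval (which sits inside $[0,\xi^\epsi_t]$). Finally, replacing $c_t$ in the prefactor by $c^{n,-}_t \leqslant c_t$ (which only enlarges the bound) and rewriting $\epsi(H_t+\delta)/c^{n,-}_t = (1+\delta)(H_t+\delta)\frac{c^{n,+}_t}{c^{n,-}_t} \cdot \frac{1}{\epsi^{-1}c^{n,+}_t(1+\delta)}$ matches exactly the target prefactor multiplying $v^{\epsi,\delta,+}_{Y^\epsi_0}(c^{n,+}_t,K^{n,+}_t,L^{n,-}_t,M^{n,-}_t)$.

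The lower bound is handled symmetrically: the reverse bounds $H_{u^{\epsi,t}_\xi}/c_{u^{\epsi,t}_\xi} \geqslant (1-\delta)H_t/c_t$, $K_{u^{\epsi,t}_\xi} \geqslant K_t(1-\delta) \geqslant K^{n,-}_t(1-\delta)$, and $|\widetilde{X}^{\epsi,t}_\xi| \geqslant \sqrt{Y^{Y^\epsi_0,L^{n,+}_t,M^{n,+}_t,\epsi,\delta,-}_\xi}$ hold on $[0, \xi^{\epsi,\delta}_t]$. I would then \emph{shrink} the integration interval down to $\epsi^{-1}c^{n,-}_t(1-\delta) \leqslant \epsi^{-1}c_t(1-\delta) \leqslant \xi^{\epsi,\delta}_t$ (valid since $f \geqslant 0$ and $c^{n,-}_t \leqslant c_t$), replace $c_t$ by $c^{n,+}_t \geqslant c_t$ in the prefactor, and rewrite the coefficient as $(1-\delta)^2 H_t \frac{c^{n,-}_t}{c^{n,+}_t} \cdot \frac{1}{\epsi^{-1}c^{n,-}_t(1-\delta)}$ to identify the target prefactor multiplying $v^{\epsi,\delta,-}_{Y^\epsi_0}(c^{n,-}_t,K^{n,-}_t,L^{n,+}_t,M^{n,+}_t)$. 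There is no substantial obstacle beyond careful bookkeeping: one must consistently track the direction of each inequality (which is dictated by whether one is bounding above or below) and verify at each step that the domain adjustments respect \eqref{eq:bound:xi epsi delta stopping} and \eqref{eq:bound:xi epsi delta global:approx} on $\{\epsi \leqslant \epsi_\delta\}$, so that the comparison diffusion is defined wherever we integrate.
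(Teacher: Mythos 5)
Your proof is correct and follows essentially the same route the paper indicates: combine Lemma~\ref{lem:sandwich}, the elementary approximations~\eqref{eq:Upsilon:approximation}, and the comparison~\eqref{eq:comparison}, applying the comparison on $[0,\xi^{\epsi,\delta}_t]$ before enlarging or shrinking the integration interval (which is exactly the right order, since \eqref{eq:comparison} holds only up to $\xi^{\epsi,\delta}_t$). The prefactor bookkeeping checks out in both directions.
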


\paragraph{Step 3: Limit theorems.}  We now combine the sandwiching inequalities from Lemma \ref{lem:sandwich:2} with an ergodic theorem for one-dimensional diffusions (cf.~Lemma~\ref{lemma.limit.epsi.ergodic}) to calculate the following ``local'' scaling limit:
 \begin{equation*}
\lim_{\epsi \to  0} \epsi^{-1}\int_{t}^{t+\epsi}H_rf\left(\frac{K_r X^\epsi_{r}}{\epsi}\right)dr, \quad t \in [0,T).
 \end{equation*}

To this end, we first establish an ergodic result, which is non-standard in that both the time horizon and the underlying process change with the small parameter at hand. As a consequence, the ergodic limit only holds in probability here rather than almost surely. To formulate this result, define for constants $k \geqslant 0$ and $l, m > \delta > 0$, the two continuous functions
\begin{align*}
w^{\delta,+}(k,l,m) &:=	\frac{\int_{\bR_+}f\left(\frac{(k  +\delta)}{m - \delta}x\right)\exp\left(-2\frac{l - \delta}{m - \delta}G\left(x\right)\right)dx}{\int_{\bR_+}\exp\left(-2\frac{l - \delta}{m - \delta}G\left(x\right)\right)dx}, \\
w^{\delta,-}(k,l,m) &:=	\frac{\int_{\bR_+}f\left(\frac{k(1 -\delta)}{m + \delta}x\right)\exp\left(-2\frac{l + \delta}{m + \delta}G\left(x\right)\right)dx}{\int_{\bR_+}\exp\left(-2\frac{l + \delta}{m + \delta}G\left(x\right)\right)dx}.
\end{align*}
With this notation, our ergodic result reads as follows:

\begin{lemma}
\label{lem:ergodic elementary}
Let $t \in [0, T)$, $\delta \in (0, \tfrac{\kappa}{2})$, and $n \in \bN$ with $n \geqslant \frac{2}{\kappa}$ be fixed. Then the following two limits hold in probability:
\begin{align}
\lim_{\epsi \to 0} v^{\epsi, \delta, \pm}_{Y^\epsi_0}(c^{n,\pm}_t,K^{n,\pm}_t,L^{n,\mp}_t,M^{n,\mp}_t)   = w^{\delta,\pm}(K^{n,\pm}_t, L^{n,\mp}_t, M^{n,\mp}_t).
\label{eq:lem:ergodic elementary:-}
\end{align}
\end{lemma}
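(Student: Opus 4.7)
The plan is to reduce \eqref{eq:lem:ergodic elementary:-} to the classical ergodic theorem for an autonomous one-dimensional diffusion. Using the partition \eqref{eq:union.Omegas}, decompose $\Omega$ into the countably many cells
$$\Omega^{\mathbf{i}, n, t} := \Omega^{i_c, c_t, n} \cap \Omega^{i_K, K_t, n} \cap \Omega^{i_L, L_t, n} \cap \Omega^{i_M, M_t, n}, \qquad \mathbf{i} = (i_c, i_K, i_L, i_M) \in \bN^4,$$
on each of which the four approximating coefficients $c^{n, \pm}_t, K^{n, \pm}_t, L^{n, \pm}_t, M^{n, \pm}_t$ reduce to deterministic constants $(c, k, l, m)$. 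By countable additivity, it suffices to establish convergence in probability on each such cell.

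On a fixed cell, \eqref{eq:bound:xi epsi delta global:approx} ensures that $\epsi^{-1} c (1 \pm \delta) \leqslant \xi^\epsi_t$, so the stopping indicators in \eqref{sde_y.epsilon.-} are inactive over the interval of integration. Applying It\^o's formula to $Z_\xi := \sqrt{Y^{y, l, m, \epsi, \delta, \pm}_\xi}$, the $+1$ term in the drift of \eqref{sde_y.epsilon.-} exactly cancels the It\^o correction arising from the quadratic variation, so
$$dZ_\xi = -(l \mp \delta)\, g\bigl((m \mp \delta) Z_\xi\bigr)\, d\xi + dB^{\epsi, \bQ, t}_\xi, \qquad Z_0 = \sqrt{y},$$
i.e.\ $Z$ is a one-dimensional diffusion on $\bR_+$ with unit diffusion coefficient and a superlinear mean-reverting drift. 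The growth condition \eqref{eq:growth.g} makes the speed density $\exp\bigl(-2 \tfrac{l \mp \delta}{m \mp \delta} G((m \mp \delta) z)\bigr)$ integrable on $\bR_+$, so $Z$ is positively recurrent with unique invariant distribution given by the normalized speed density. Crucially, the law of $Z$ (conditional on its starting point) does not depend on $\epsi$, since $B^{\epsi, \bQ, t}$ is a $\bQ$-Brownian motion on the interval of interest.

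For any deterministic $z_0 \geqslant 0$, the classical ergodic theorem for positively recurrent one-dimensional diffusions yields
$$\frac{1}{T} \int_0^T f\bigl((k \pm \delta) Z_\xi\bigr)\, d\xi \xrightarrow[T \to \infty]{\text{a.s.}} \frac{\int_0^\infty f\bigl((k \pm \delta) z\bigr) \exp\bigl(-2 \tfrac{l \mp \delta}{m \mp \delta} G((m \mp \delta) z)\bigr)\, dz}{\int_0^\infty \exp\bigl(-2 \tfrac{l \mp \delta}{m \mp \delta} G((m \mp \delta) z)\bigr)\, dz},$$
and the change of variable $x = (m \mp \delta) z$ identifies the right-hand side with $w^{\delta, \pm}(k, l, m)$. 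To handle the actual starting point $Z^\epsi_0 = |X^\epsi_t|/\epsi$ and the $\epsi$-dependent time horizon $T(\epsi) = \epsi^{-1} c(1 \pm \delta) \to \infty$, I would proceed in two steps. First, show tightness of $\{Z^\epsi_0\}_{\epsi > 0}$, which follows from the squared-Bessel comparison argument underlying Proposition~\ref{proposition:SDE.proof.existence} together with the standing assumption $\limsup_\epsi |x_0^\epsi|/\epsi < \infty$. Second, upgrade the pointwise ergodic theorem to convergence in probability uniform over starting points in compact subsets of $\bR_+$; tightness then reduces the random-initial-condition case to bounded initial conditions at arbitrarily small probabilistic cost.

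The principal obstacle is this uniform-in-initial-condition ergodic statement, needed precisely because the initial condition $Z^\epsi_0$ and the horizon $T(\epsi)$ are coupled through $\epsi$. The superlinear growth \eqref{eq:growth.g} of $g$ is what makes it tractable: it supplies a Lyapunov function (for instance $V(z) = z^2$, whose generator is $1 - 2(l \mp \delta) z g((m \mp \delta) z)$, negative outside a compact set) yielding moments of hitting times into compacts around the origin that are bounded uniformly in the starting point, from which uniform rates of ergodic convergence follow by a standard coupling or regeneration argument.
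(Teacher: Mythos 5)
Your reduction to the cells $\Omega^{\mathbf{i},n,t}$ matches the paper, as does the observation that \eqref{eq:bound:xi epsi delta global:approx} makes the stopping indicators inactive, that the law of the driving Brownian motion (hence of $v^{\epsi,\delta,\pm}_y$ for $y$ fixed) is $\epsi$-independent, and that tightness of $Y^\epsi_0$ follows from the squared-Bessel comparison (the paper packages this as Lemma~\ref{lemma.uniform.moment.bar}, giving $\bQ[Y^\epsi_0 > 1/\lambda] \leqslant \lambda \overline{C}_2$). Where you diverge is at the step you yourself flag as the principal obstacle: handling the coupling between the random initial condition and the $\epsi$-dependent horizon.

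You propose to prove a uniform-in-starting-point ergodic theorem via a Lyapunov function and a coupling or regeneration argument, and you leave this as a sketch. That route can be made to work, but it is substantially heavier machinery than the paper needs, and as written it is the one genuinely unproven step in your argument. The paper avoids it entirely with an elementary observation you have available but do not use: by SDE comparison in the initial condition (Lemma~\ref{lem:app:comparison}), the map $y \mapsto v^{\epsi,\delta,\pm}_y(c,k,l,m)$ is non-decreasing. Hence on $\{0 \leqslant Y^\epsi_0 \leqslant 1/\lambda\}$ one has the deterministic sandwich
\begin{equation*}
\big|v^{\epsi,\delta,\pm}_{Y^\epsi_0} - \bar w\big| \;\leqslant\; \big|v^{\epsi,\delta,\pm}_{0} - \bar w\big| + \big|v^{\epsi,\delta,\pm}_{1/\lambda} - \bar w\big|,
\end{equation*}
which reduces the entire problem to the a.s.\ ergodic theorem for exactly two fixed initial conditions, $y=0$ and $y=1/\lambda$, handled by Lemma~\ref{lemma.limit.epsi.ergodic} after the law-equality observation. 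Adding the remainder term $\bQ[Y^\epsi_0 > 1/\lambda] \leqslant \lambda \overline{C}_2$ and sending $\lambda \to 0$ finishes the argument. So: your structure is right, but you have misidentified the simplest way to close the final gap — monotonicity in the initial condition replaces the need for any uniform-over-compacta ergodic statement, and that trick should be what the proof pivots on. (Two smaller omissions: the argument is run under $\bQ$, with $\bP\approx\bQ$ on $\cF_T$ transferring the conclusion back; and the a.s.\ limits are upgraded to $L^1$-in-$\bQ$ via the $\wedge\, 1$ truncation and bounded convergence, which is what delivers convergence in probability under $\bP$.)
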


\begin{proof}
We only spell out the argument for the ``$-$''-limit in \eqref{eq:lem:ergodic elementary:-}; the ``$+$''-limit is established analogously. By one of the equivalent characterizations of convergence in probability, we have to show that 
\begin{equation*}
\lim_{\epsi \to  0}\bE \left[\left|v^{\epsi, \delta, -}_{Y^\epsi_0}(c^{n,-}_t,K^{n,-}_t,L^{n,+}_t,M^{n,+}_t) -  w^{\delta,-}(K^{n,-}_t, L^{n,+}_t, M^{n,+}_t)\right| \wedge 1 \right] = 0.
\end{equation*}
Since both $v^{\epsi, \delta, -}_{Y^\epsi_0}(c^{n,-}_t,K^{n,-}_t,L^{n,+}_t,M^{n,+}_t)$ and $w^{\delta,-}(K^{n,-}_t, L^{n,+}_t, M^{n,+}_t)$ are $\cF_T$-measurable and the measure $\bQ$ defined in \eqref{eq:proba.change.drift.Delta} is equivalent to $\bP$ on $\cF_T$, it suffices to show that
\begin{equation*}
\lim_{\epsi \to  0}\bE_{\bQ} \left[\left|v^{\epsi, \delta, -}_{Y^\epsi_0}(c^{n,-}_t,K^{n,-}_t,L^{n,+}_t,M^{n,+}_t) -  w^{\delta,-}(K^{n,-}_t, L^{n,+}_t, M^{n,+}_t)\right| \wedge 1 \right] = 0.
\end{equation*}
For $i_c, i_K, i_L,i_M \in \bN$ set 
\begin{equation*}
\Omega_{i_c,i_K,i_L,i_M} := \Omega^{i_c,c_t,n} \cap \Omega^{i_K,K_t,n} \cap \Omega^{i_L,L_t,n} \cap \Omega^{i_M,M_t,n}, 
\end{equation*}
where, for $\Upsilon_t \in \{c_t,K_t,L_t,M_t\}$ and $i \in \{i_c,i_K,i_L,i_M\}$, the set $\Omega^{i, \Upsilon_t, n}$ is defined as in \eqref{eq:Omega:Upsilon}. By dominated convergence and \eqref{eq:Omega:Upsilon} it suffices to show that 
\begin{equation*}
\lim_{\epsi \to  0}\bE_{\bQ} \left[\left(\big|v^{\epsi, \delta, -}_{Y^\epsi_0}(c^{n,-}_t,K^{n,-}_t,L^{n,+}_t,M^{n,+}_t) -  w^{\delta,-}(K^{n,-}_t, L^{n,+}_t, M^{n,+}_t)\big| \wedge 1 \right)\one_{\Omega_{i_c,i_K,i_L,i_M} }\right] = 0,
\end{equation*}
for all $i_c, i_K, i_L,i_M \in \bN$. So fix $i_c, i_K, i_L,i_M \in \bN$ with $\bQ[\Omega_{i_c,i_K,i_L,i_M} ] > 0$. Note that by \eqref{eq:Omega:Upsilon:condition} and $\bP\approx\bQ$, this implies in particular that $i_c/n, i_L/n,i_M/n > \delta$. Using that $c^{n,-}_t,K^{n,-}_t,L^{n,+}_t,M^{n,+}_t$ take the constant values $i_c/n , i_K/n, (i_L+1)/n,  (i_M+1)/n$ on $\Omega_{i_c,i_K,i_L,i_M}$ we have, for each fixed $\epsi > 0$,
\begin{align}
&\bE_{\bQ} \left[\left(\big|v^{\epsi, \delta, -}_{Y^\epsi_0}(c^{n,-}_t,K^{n,-}_t,L^{n,+}_t,M^{n,+}_t) -  w^{\delta,-}(K^{n,-}_t, L^{n,+}_t, M^{n,+}_t)\big| \wedge 1 \right)\one_{\Omega_{i_c,i_K,i_L,i_M} }\right]  \notag \\
&\qquad =\bE_{\bQ} \Big[\Big(\Big|v^{\epsi, \delta, -}_{Y^\epsi_0}\Big(\frac{i_c}{n} , \frac{i_K}{n}, \frac{i_L+1}{n},  \frac{i_M+1}{n}\Big)-  w^{\delta,-}\left(\frac{i_c}{n} , \frac{i_K}{n}, \frac{i_L+1}{n},  \frac{i_M+1}{n}\right)\Big| \wedge 1\Big)\one_{\Omega_{i_c,i_K,i_L,i_M} } \Big].
\label{eq:ergodic,iciKiLiM}
\end{align}
We proceed to estimate the right-hand side of \eqref{eq:ergodic,iciKiLiM}. To simplify notation, set 
\begin{equation}
\bar w := w^{\delta,-}\Big(\frac{i_c}{n}, \frac{i_K}{n}, \frac{i_L+1}{n},  \frac{i_M+1}{n}\Big)
\end{equation}
and note that this is a constant. For $\lambda > 0$, we split up the expectation to the disjoint events $\{0 \leqslant Y^\epsi_0 \leqslant \frac{1}{\lambda} \}$ and $\{Y^\epsi_0 > \frac{1}{\lambda} \}$. On the event $\{0 \leqslant Y^\epsi_0 \leqslant \frac{1}{\lambda} \}$, we use that $v^{\epsi, \delta, -}_y(c,k,l,m)$ is non decreasing in $y$ together with the elementary inequality $|z - \bar{w}| \leqslant |z_{\min} -\bar{w}| +  |z_{\max} -\bar{w}|$ for $\bar{w}\in\bR$ and $z_{\min}\leqslant z\leqslant z_{\max} \in \bR$. On the event $\{Y^\epsi_0 > \frac{1}{\lambda} \}$, we use that the random variable inside the expectation on the right-hand side of \eqref{eq:ergodic,iciKiLiM} is bounded from above by $1$. Together, this yields
\begin{align}
&\bE_{\bQ} \left[\left|v^{\epsi, \delta, -}_{Y^\epsi_0}\left(\frac{i_c}{n} , \frac{i_K}{n}, \frac{i_L+1}{n},  \frac{i_M+1}{n}\right) -  \bar w \right| \wedge 1 \right] \notag \\
&\qquad\leqslant \bE_{\bQ} \left[\left|v^{\epsi, \delta, -}_{0}\left(\frac{i_c}{n} , \frac{i_K}{n}, \frac{i_L+1}{n},  \frac{i_M+1}{n}\right) -   \bar w \right| \wedge 1 \right] \notag \\
&\qquad \quad +\bE_{\bQ} \left[\left|v^{\epsi, \delta, -}_{\frac{1}{\lambda}}\left(\frac{i_c}{n} , \frac{i_K}{n}, \frac{i_L+1}{n},  \frac{i_M+1}{n}\right) -  \bar w \right| \wedge 1 \right]+\bQ\left[Y^\epsi_0 > \frac{1}{\lambda} \right].
\label{eq:ergodic,iciKiLiM:rewritten}
\end{align}
Next, note that for $y \in \{0, \frac{1}{\lambda}\}$ independent of $\epsi$, the random variables $v^{\epsi, \delta, -}_y(i_c/n , i_K/n, (i_L+1)/n,  (i_M+1)/n)$ depend on $\epsi$ only via the Brownian motion
$B^{\bQ,\epsi, t}$ which also is the only source of stochasticity. In particular, the law of $v^{\epsi, \delta, -}_y(i_c/n , i_K/n, (i_L+1)/n,  (i_M+1)/n)$ does not depend on $\epsi$. Thus, if we replace the $\bQ$-Brownian motion $B^{\bQ,\epsi, t}$ by any other fixed Brownian motion $B$ (on some different probability space), the result does not change. Hence, we can apply the ergodic theorem for one-dimensional diffusions in the form of Lemma \ref{lemma.limit.epsi.ergodic} to conclude that the first two terms on the right-hand side of \eqref{eq:ergodic,iciKiLiM:rewritten} converge to zero as $\epsi \to 0$.\footnote{This also uses crucially the estimate \eqref{eq:bound:xi epsi delta global:approx} and the fact that $i_c/n\geqslant \kappa/2$.} Finally, by Markov's inequality and Lemma~\ref{lemma.uniform.moment.bar},\footnote{Note that all assumptions in Lemma~\ref{lemma.uniform.moment.bar} are satisfied by \eqref{eq:ass kappa}.}  there is a constant $\overline{C}_2$ independent of $\epsi$ such that
\begin{equation}
\label{eq:Y0epsi}
\bQ\left[Y^\epsi_0 > \frac{1}{\lambda} \right]= \bQ\left[\left(\widetilde X^{\epsi,t}_{0}\right)^2 > \frac{1}{\lambda} \right] = \bQ\left[\left(\frac{X^\epsi_{t}}{\epsi}\right)^2 > \frac{1}{\lambda} \right]  \leqslant \lambda \bE_{\bQ}\left[\left( \frac{X^\epsi_{t}}{\epsi}\right)^2 \right] \leqslant \lambda \overline{C}_2.
\end{equation}
The claim in turn follows by letting $\lambda$ go to zero.
\end{proof}

Sending the localization parameter $\delta$ from Lemma~\ref{lem:sandwich} to zero and the discretization parameter $n$ from \eqref{eq:Omega:Upsilon} to infinity, we now obtain the following scaling limit:

\begin{proposition}
	\label{proposition.limit.integrals}
For $t \in [0, T)$, the following limit holds in probability:
	\begin{align*}
\lim_{\epsi \to  0} \epsi^{-1}\int_{t}^{t+\epsi}H_rf\left(\frac{K_r X^\epsi_{r}}{\epsi}\right)dr &=H_t \frac{\int_{\bR}f\left(\frac{K_t}{M_t}x\right) \exp\left(-2\frac{L_t}{M_t}G\left(x\right)\right)dx}{\int_{\bR}\exp\left( -2\frac{L_t}{M_t}G\left(x\right)\right)dx}.
	\end{align*}
\end{proposition}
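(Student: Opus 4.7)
\textbf{Proof plan for Proposition \ref{proposition.limit.integrals}.}
Fix $t\in[0,T)$. Denote
\begin{equation*}
I^\epsi := \epsi^{-1}\int_{t}^{t+\epsi}H_rf\Big(\frac{K_r X^\epsi_{r}}{\epsi}\Big)dr, \qquad w^{\ast} := H_t \frac{\int_{\bR}f\big(\frac{K_t}{M_t}x\big) \exp\big(-2\frac{L_t}{M_t}G(x)\big)dx}{\int_{\bR}\exp\big(-2\frac{L_t}{M_t}G(x)\big)dx}.
\end{equation*}
The strategy is a three-parameter squeeze: use the sandwich of Lemma~\ref{lem:sandwich:2} to control $I^\epsi$, send $\epsi\to 0$ via the ergodic statement of Lemma~\ref{lem:ergodic elementary}, and then let the discretization parameter $n\to\infty$ and the localization parameter $\delta\to 0$ to collapse the two bounds onto $w^{\ast}$.

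First I would introduce, for fixed $n\geqslant 2/\kappa$ and $\delta\in(0,\kappa/2)$, the random upper and lower limit targets
\begin{align*}
L^{n,\delta} &:= (1-\delta)^2 H_t \tfrac{c^{n,-}_t}{c^{n,+}_t}\, w^{\delta,-}(K^{n,-}_t,L^{n,+}_t,M^{n,+}_t),\\
U^{n,\delta} &:= (1+\delta)(H_t+\delta)\tfrac{c^{n,+}_t}{c^{n,-}_t}\, w^{\delta,+}(K^{n,+}_t,L^{n,-}_t,M^{n,-}_t).
\end{align*}
Combining Lemma~\ref{lem:sandwich:2} with Lemma~\ref{lem:ergodic elementary} and the a.s.\ convergence $\one_{\{\epsi\leqslant \epsi_\delta\}}\to 1$ (since $\epsi_\delta>0$ a.s.), a short two-line estimate shows that for every $\eta>0$,
\begin{equation*}
\lim_{\epsi\to 0}\bP\big[I^\epsi < L^{n,\delta}-\eta\big] = \lim_{\epsi\to 0}\bP\big[I^\epsi > U^{n,\delta}+\eta\big] = 0.
\end{equation*}

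Next I would verify the deterministic convergence $L^{n,\delta},\, U^{n,\delta}\to w^{\ast}$ in probability as $n\to\infty$ and $\delta\to 0$. As $n\to\infty$, $c^{n,\pm}_t\to c_t$, $K^{n,\pm}_t\to K_t$, $L^{n,\pm}_t\to L_t$, $M^{n,\pm}_t\to M_t$ pointwise by \eqref{eq:Upsilon:approximation}, so by continuity of $w^{\delta,\pm}$ in $(k,l,m)$ (a routine dominated-convergence check on the defining integrals, using $|f(y)|\leqslant C_f(|y|^{q'}+1)$ and the superlinear lower bound $G(x)\gtrsim |x|^{q+1}$ inherited from \eqref{eq:growth.g}) one obtains $L^{n,\delta}\to(1-\delta)^2 H_t\, w^{\delta,-}(K_t,L_t,M_t)$ and analogously for $U^{n,\delta}$. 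As $\delta\to 0$, the same dominated-convergence argument, together with the fact that $f$ is even and $G$ is even (its $g$ being odd), collapses the integrals over $\bR_+$ to their counterparts over $\bR$, yielding the common limit $w^{\ast}$.

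Finally, I would combine the two passages. Fix $\eta>0$. By the previous step, choose $n$ large and $\delta$ small so that
$\bP\big[|L^{n,\delta}-w^{\ast}|>\eta/3\big] + \bP\big[|U^{n,\delta}-w^{\ast}|>\eta/3\big] < 2\eta/3$. Then for all sufficiently small $\epsi$,
\begin{equation*}
\bP[|I^\epsi-w^{\ast}|>\eta]\leqslant \bP[I^\epsi > U^{n,\delta}+\eta/3] + \bP[I^\epsi < L^{n,\delta}-\eta/3] + 2\eta/3 < \eta,
\end{equation*}
which is the desired convergence in probability. The main technical hurdle lies in handling the iterated limits cleanly: one must commit to fixed $(n,\delta)$ to invoke the ergodic Lemma~\ref{lem:ergodic elementary}, and only afterwards send $n\to\infty$, $\delta\to 0$ while controlling the remaining gap between $L^{n,\delta}, U^{n,\delta}$ and $w^{\ast}$ uniformly in probability---this is precisely what the three-step $\eta/3$-argument above achieves, relying on continuity of $w^{\delta,\pm}$ coming from the superlinear growth \eqref{eq:growth.g}.
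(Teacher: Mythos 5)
Your proposal is correct and follows the same route as the paper: sandwich via Lemma~\ref{lem:sandwich:2}, send $\epsi\to 0$ via the ergodic Lemma~\ref{lem:ergodic elementary}, then let $n\to\infty$ and $\delta\to 0$ to collapse the bounds onto $w^{\ast}$ (with the intermediate limits computed by dominated convergence using the polynomial growth of $f$ and the superlinear growth of $G$). The only difference is cosmetic bookkeeping in the final step: the paper invokes the subsequence criterion to upgrade the in-probability limits of Lemma~\ref{lem:ergodic elementary} to a.s.\ convergence along a subsequence and then squeezes pointwise, whereas you run an $\eta/3$-argument directly at the level of probabilities --- both are valid ways to interchange the triple limit in $\epsi$, $n$, $\delta$.
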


\begin{proof} Fix $t \in [0, T)$. For $\epsi \in (0, \frac{\kappa^2(T-t)}{2})$, $\delta \in (0, \frac{\kappa}{2})$, and $n \in \bN$ with $n  \geqslant \frac{2}{\kappa}$, set
\begin{align*}
a^{\epsi, \delta, n, -}_t &:= \one_{\{\epsi \leqslant \epsi_\delta\}} \left(1-\delta\right)^2 H_t \frac{c^{n,-}_t }{c^{n,+}_t } v^{\epsi, \delta, -}_{Y^\epsi_0}(c^{n,-}_t,K^{n,-}_t,L^{n,+}_t,M^{n,+}_t),
\\
a^{\epsi, \delta, n, +}_t &:= \one_{\{\epsi \leqslant \epsi_\delta\}}(1+\delta)(H_t + \delta) \frac{c^{n,+}_t}{c^{n,-}_t} v^{\epsi, \delta, +}_{Y^\epsi_0}(c^{n,+}_t,K^{n,+}_t,L^{n,-}_t,M^{n,-}_t)  , \\
a^{1,\epsi,\delta}_t &:= 
\one_{\{\epsi \leqslant \epsi_\delta\}} \epsi^{-1}\int_{t}^{t+\epsi}H_sf\left(\frac{K_s X^\epsi_{s}}{\epsi}\right)ds, \\
a^{1,\epsi}_t &:= \epsi^{-1}\int_{t}^{t+\epsi}H_sf\left(\frac{K_s X^\epsi_{s}}{\epsi}\right)ds, \\
a^{2}_t &:=H_t w(K_t,L_t,M_t),
\end{align*}
where, for constants $k \geqslant 0$ and $l, m > 0$,
\begin{equation*}
w(k,l,m) :=\frac{\int_{\bR}f\left(\frac{k}{m}x\right)\exp\left(-2\frac{l}{m}G\left(x\right)\right)dx}{\int_{\bR}\exp\left(-2\frac{l}{m}G\left(x\right)\right)dx}.
\end{equation*}
Note that the choices for $\epsi$ and $\kappa\in(0,1)$ ensure that $t+\epsi\leqslant T$. Furthermore, by Lemma \ref{lem:sandwich:2}, we have
\begin{equation}
\label{eq:pf:prop.limit.integrals:inequality}
a^{\epsi, \delta, n, -}_t \leqslant a^{1,\epsi,\delta}_t \leqslant a^{\epsi, \delta, n, +}_t.
\end{equation}	
We want to show that, in probability, $\lim_{\epsi \to 0} a^{1,\epsi}_t = a^{2}_t$. To this end, we use the subsequence criterion for convergence in probability, cf.~\cite[Lemma 4.2]{kallenberg.02}. Let $(\epsi_m)_{m \in \bN}$ be a sequence of positive real numbers converging to zero. By Lemma \ref{lem:ergodic elementary} and the subsequence criterion, there exists a subsequence $(m_k)_{k \in \bN}$ of $\bN$ such that 
\begin{align*}
\lim_{k \to \infty} v^{\epsi_{m_k}, \delta, \pm}_{Y^{\epsi_{m_k}}_0}(c^{n,\pm}_t,K^{n,\pm}_t,L^{n,\mp}_t,M^{n,\mp}_t)   = w^{\delta,\pm}(K^{n,\pm}_t, L^{n,\mp}_t, M^{n,\mp}_t) \;\; \text{a.s.}
\end{align*}
Hence, 
\begin{align*}
\lim_{k \to \infty} a^{\epsi_{m_k}, \delta, n, -}_t &= \left(1-\delta\right)^2 H_t \frac{c^{n,-}_t}{c^{n,+}_t }  w^{\delta,-}(K^{n,-}_t, L^{n,+}_t, M^{n,+}_t)\;\; \text{a.s.}, \\
\lim_{k \to \infty} a^{\epsi_{m_k}, \delta, n, +}_t &= \left(1+\delta\right)(H_t + \delta) \frac{c^{n,+}_t}{c^{n,-}_t} w^{\delta,+}(K^{n,+}_t, L^{n,-}_t, M^{n,-}_t) \;\; \text{a.s.}
\end{align*}
Moreover, \eqref{eq:pf:prop.limit.integrals:inequality} and $\epsi_\delta > 0$ give
\begin{align}
\lim_{k \to \infty} a^{\epsi_{m_k}, \delta, n, -}_t &\leqslant \liminf_{k \to \infty} a^{1,\epsi_{m_k},\delta}_t = \liminf_{k \to \infty} a^{1,\epsi_{m_k}}_t  \leqslant \limsup_{k \to \infty}  a^{1,\epsi_{m_k}}_t = \limsup_{k \to \infty}  a^{1,\epsi_{m_k},\delta}_t \notag \\
&\leqslant \lim_{k \to \infty} a^{\epsi_{m_k}, \delta, n, +}_t\;\; \text{a.s.}
\label{eq:pf:prop.limit.integrals:inequality limit}
\end{align}
Finally, using that $\Upsilon^{n, \pm}_t$ (defined in \eqref{eq:proc.approx}) converges almost surely to $\Upsilon_t$ as $n \to \infty$ for $\Upsilon \in \{K, L, M\}$, we obtain by dominated convergence (and continuity of $f$) that
\begin{align*}
\lim_{\delta \to  0} \lim_{n \to \infty} \lim_{k \to \infty} a^{\epsi_{m_k}, \delta, n, -}_t = H_t w(K_t, L_t,M_t)
= \lim_{\delta \to  0} \lim_{n \to \infty} \lim_{k \to \infty} a^{\epsi_{m_k}, \delta, n, +}_t.
\end{align*}
Together with \eqref{eq:pf:prop.limit.integrals:inequality limit}, this shows that
\begin{equation*}
 \liminf_{k \to \infty} a^{1,\epsi_{m_k}}_t  = \limsup_{k \to \infty}  a^{1,\epsi_{m_k}}_t = H_t w(K_t, L_t,M_t) = a^2_t\ \text{a.s.}
\end{equation*}
The assertion in turn follows from the subsequence criterion.
\end{proof}

\subsection{Concatenation of the local estimates}
\label{section.concatenation}

We now piece together the local estimates from Proposition~\ref{proposition.limit.integrals} to establish Theorem~\ref{theorem}. For each $\epsi \in (0, T)$, define as above the product-measurable processes $(a^{1,\epsi}_t)_{t \in [0, T]}$ and $(a^{2}_t)_{t \in [0, T]}$ by\footnote{The indicator of the set $\{0\leqslant t\leqslant T -\epsi\}$ is needed now as we define $a^{1,\epsi}_t$ on $[0, T]$ for fixed $\epsi > 0$.}
\begin{align*}
a^{1,\epsi}_t &= \one_{\{0\leqslant t\leqslant T -\epsi\}}\epsi^{-1}\int_{t}^{t+\epsi}H_sf\left(\frac{K_s X^\epsi_{s}}{\epsi}\right)ds, \quad a^{2}_t = H_t w(K_t, L_t,M_t).
\end{align*}
It follows from Fubini's theorem that, for each $t \in [0, T]$:
\begin{align}
\int_0^t a^{1,\epsi}_s ds 
&=\int_{0}^T \int_{0}^T  \one_{\{0 \leqslant s \leqslant t \wedge (T -\epsi)\}} \one_{\{s \leqslant r \leqslant s + \epsi\}} \epsi^{-1} H_rf\left(\frac{K_r X^\epsi_{r}}{\epsi}\right)dr ds
\notag\\
&=  \int_{0}^T \int_{0}^T  \one_{\{r - \epsi \vee 0 \leqslant s \leqslant r\wedge  t \wedge (T -\epsi)\}} ds \one_{\{0 \leqslant r \leqslant (t+ \epsi) \wedge T\}} \epsi^{-1} H_rf\left(\frac{K_r X^\epsi_{r}}{\epsi}\right)dr \notag \\
&=  \int_{0}^T \int_{0}^T  \left(\one_{\{r - \epsi \vee 0 \leqslant s \leqslant r\}}  - \one_{\{ t \wedge (T -\epsi) < s \leqslant r\}} \right) ds \one_{\{0 \leqslant r \leqslant (t+ \epsi) \wedge T\}} \epsi^{-1} H_rf\left(\frac{K_r X^\epsi_{r}}{\epsi}\right)dr \notag \\
&= \int_0^{\epsi} \epsi^{-1}  r  H_rf\left(\frac{K_r X^\epsi_{r}}{\epsi}\right)dr+\int_{\epsi}^{(t +\epsi) \wedge T} H_rf\left(\frac{K_r X^\epsi_{r}}{\epsi}\right)dr  \notag  \\
&\quad - \int_{ t \wedge (T -\epsi)}^{(t+\epsi) \wedge T}\epsi^{-1}\Big(r -\big(t\wedge(T-\epsi\big)\Big) H_rf\left(\frac{K_r X^\epsi_{r}}{\epsi}\right)dr \label{eq:pre.concat:Fubini}\\
&\leqslant \int_{0}^T H_sf\left(\frac{K_s X^\epsi_{s}}{\epsi}\right)ds.
\label{eq:concat:Fubini}
\end{align}
Rearranging and recalling the growth condition \eqref{eq:condition.f} for $f$ and the fact that $K$ and $L$ are uniformly bounded from above by $1/\kappa$ yields
\begin{align*}
\sup_{t \in [0, T]} \left|\int_0^t a^{1,\epsi}_s ds - \int_{0}^t H_sf\left(\frac{K_s X^\epsi_{s}}{\epsi}\right)ds\right| &\leqslant  \int_0^{\epsi} H_s f\left(\frac{K_s X^\epsi_{s}}{\epsi}\right)ds \notag \\
&\quad + \sup_{t \in [0, T]} \left(\int_t^{(t+\epsi) \wedge T} H_s f\left(\frac{K_s X^\epsi_{s}}{\epsi}\right)ds \right) \notag \\
&\quad + \sup_{t \in [0, T-\epsi]} \left(\int_t^{t+\epsi} H_s f\left(\frac{K_s X^\epsi_{s}}{\epsi}\right)ds \right) \notag \\
&\leqslant 3 \sup_{t \in [0, T]} \left(\int_t^{(t+\epsi) \wedge T} \frac{1}{\kappa} C_f \left(\left(\left|\frac{X^\epsi_{s}}{\epsi}\right| \frac{1}{\kappa}\right)^{q^\prime} + 1\right) ds \right)  \notag \\
&\leqslant 3 C_f \left(\frac{1}{\kappa}\right)^{1 + q^\prime} \ \epsi \left( \sup_{t \in [0, T]}\left|\frac{X^\epsi_{t}}{\epsi}\right|^{q^\prime} + \kappa^{q^\prime} \right).
\end{align*}
Thus, it follows from Lemma \ref{lem.max.inequality} that 
\begin{equation}
\lim_{\epsi \to 0} \bE \left[\sup_{t \in [0, T]} \left|\int_0^t a^{1,\epsi}_s ds - \int_{0}^t H_sf\left(\frac{K_s X^\epsi_{s}}{\epsi}\right)ds\right|^p \right] =0.
\label{eq:concat:Sp:null:sequence}
\end{equation}

Now, by Proposition \ref{proposition.limit.integrals}, for each $t \in [0, T)$, we have
\begin{equation*}
\plim_{\epsi \to 0} a^{1,\epsi}_t  = a^2_t.
\end{equation*}

Next, recall that Assumptions \ref{ass.novikov} and \ref{ass.integrability} are satisfied due to the uniform boundedness assumption~\eqref{eq:ass kappa}. Therefore, Lemma~\ref{lem:app:integrability estimate} ensures that $a^2$ is in $ L^p (\bP\otimes\mathrm{Leb}_{|[0,T]})$. Moreover, Jensen's inequality and computations similar to the ones leading to \eqref{eq:concat:Fubini} give 
\begin{align*}
\int_{0}^{T}\big|a^{1,\epsi}_t\big|^{p(1+\eta)}dt\leqslant \int_{0}^{T}H_s^{p(1+\eta)}f\left(\frac{K_tX^\epsi_{t}}{\epsi}\right)^{p(1+\eta)}dt,
\end{align*} 
which, with Lemma \ref{lem:U.I} yields
\begin{align}
&\sup_{\epsi > 0}\bE \left[\int_{0}^{T}\big|a^{1,\epsi}_t\big|^{p(1+\eta)}dt\right]  < \infty. \label{eq:concat:U.I estimate}
\end{align}
By de la Vall\'ee-Poussin's criterion for uniform integrability (cf., e.g., the remark before \cite[Lemma~4.10]{kallenberg.02}), \eqref{eq:concat:U.I estimate} implies that $(a^{1,\epsi})^p$ is uniformly integrable with respect to $\bP\otimes\text{Leb}_{|[0,T]}$.

Hence, it follows from Lemma \ref{lemma.abstract} that
\begin{equation}
\label{eq:concat:abstract}
\lim_{\epsi \to 0} \bE \left[\sup_{t \in [0, T]} \left|\int_0^t a^{1,\epsi}_s ds - \int_{0}^t a^2_s ds\right|^p\right] = 0.
\end{equation}
Theorem \ref{theorem} now follows by putting together \eqref{eq:concat:abstract} and \eqref{eq:concat:Sp:null:sequence}.
\qed

\begin{remark}
	\label{rem:comparison:Vere}
	To prove the variant of the results mentioned in Remark~\ref{rem:corollary.Vere}, the argument in Section \ref{section.concatenation} needs to be changed at two places. The first is the use of moments of $X^\epsi/\epsi$ to obtain \eqref{eq:Y0epsi}. This requires a version of Lemma~\ref{lemma.uniform.moment.bar} valid under the alternative assumptions of Remark~\ref{rem:corollary.Vere}. To this end, we argue as in Veretennikov~\cite[Lemmas 1-5]{veretennikov.97}: Fix $\epsi > 0$ and compare the SDE satisfied by $\overline{X}^\epsi_\cdot=X^\epsi_{\epsi^2 \cdot}$ under $\bQ$,
	\begin{align*}
		d\overline{X}^\epsi_s =-\sigma^2 L_{\epsi^2 s} g\Big(M_{\epsi^2 s}\overline{X}^\epsi_s\Big)ds +\sigma  d\overline{W}^{\epsi,\bQ}_s, \qquad \overline{X}^\epsi_0=x^\epsi_0/\epsi,
	\end{align*}
	where $\overline{W}^{\epsi,\bQ}$ is some  $\bQ$-Brownian motion, to the SDE satisfied by the process $\overline{X}^{V,\epsi}$ under $\bQ$ given by\footnote{This process is denoted by $v$ in \cite[Lemma 2]{veretennikov.97}. Note, moreover, that  $\overline{X}^{V,\epsi}$ depends on $\epsi$ only through the Brownian motion $\overline{W}^{\epsi,\bQ}$.}
	\begin{equation}
		d \overline{X}^{V,\epsi}_s = -\frac{\sigma^2 C}{2} \frac{1}{\overline{X}^{V,\epsi}_s}ds+\sigma d\overline{W}^{\epsi,\bQ}_s+dA_s, \qquad\overline{X}^{V,\epsi}_0=\frac{x^\epsi_0}{\epsi}\vee 2,
	\end{equation}
	where $A$ is a non-decreasing process increasing on $\{\overline{X}^{V,\epsi}=1\}$, starting at $0$ and such that
	\begin{equation*}
		A_t=\int_0^t \one_{\{\overline{X}^{V,\epsi}_s=1\}}dA_s,\qquad \mbox{and} \qquad \bE_{\bQ}\bigg[\int_{0}^{\epsi^{-2}T} \one_{\{\overline{X}^{V,\epsi}_s=1\}}ds\bigg]=0.
	\end{equation*}
	The process $\overline{X}^{V,\epsi}$ has a non-sticky reflecting boundary at $1$. In view of~\eqref{eq:condition.Veretennikov}, \cite[Lemma 2]{veretennikov.97} then shows that
	\begin{align}
	\label{eq:comparision.Veretennikov}
	\bQ\Big[\big|\overline{X}^\epsi_t\big|\leqslant \big|\overline{X}^{V,\epsi}_t\big|,~\forall~t\in[0,\epsi^{-2}T]\Big]=1.
	\end{align}
	The existence of all moments for $X^\epsi/\epsi$ then follows from the existence of all moments for $\overline{X}^{V,\epsi}$, cf.~\cite[Lemmas 3-5]{veretennikov.97} and the assumption $\limsup_{\epsi \to 0} |x^\epsi_0|/\epsi < \infty$ made on the initial condition.
	
	The second adaptation is necessary in the arguments after~\eqref{eq:concat:Fubini}. Fix $0<\epsi\leqslant 1$. Starting from \eqref{eq:pre.concat:Fubini}, using that $f$ and $H$ are nonnegative and $f$ is non-decreasing, the comparison result \eqref{eq:comparision.Veretennikov} (which remains true under $\bP$), the elementary inequality $(a+b)^p \leqslant 2^p (a^p+b^p)$ for $a,b\geqslant 0$, Jensen's inequality (using also that $p \geq  1$ and $\epsi\leqslant 1$), the growth condition \eqref{eq:condition.f} for $f$, Fubini's theorem and the moments assumptions on $H$ and $K$, we obtain for fixed $t \in [0, T]$
	\begin{align}
		&\bE\bigg[\left|\int_0^t a^{1,\epsi}_s ds - \int_{0}^t H_sf\left(\frac{K_s X^{\epsi}_{s}}{\epsi}\right)ds\right|^p\bigg] \notag \\
		&\quad \leqslant \bE\bigg[\bigg|\int_0^{\epsi} H_sf\left(K_s \overline{X}^\epsi_{\epsi^{-2} s}\right)ds+2\int_{t\wedge(T-\epsi)}^{(t+\epsi) \wedge T} H_sf\left(K_s \overline{X}^\epsi_{\epsi^{-2} s}\right)ds\bigg|^p\bigg] \notag \\
		&\quad \leqslant \bE\bigg[\bigg|\int_0^{\epsi} H_sf\left(K_s \overline{X}^{V,\epsi}_{\epsi^{-2} s}\right)ds+2\int_{t\wedge(T-\epsi)}^{(t+\epsi) \wedge T} H_sf\left(K_s \overline{X}^{V,\epsi}_{\epsi^{-2} s}\right)ds\bigg|^p\bigg] \notag \\
		&\quad \leqslant  2^p\bE\bigg[\int_0^{\epsi} H^p_s f\left(K_s  \overline{X}^{V,\epsi}_{\epsi^{-2}s}\right)^p ds\bigg]+ 4^p\bE\bigg[\int_{t\wedge(T-\epsi)}^{(t+\epsi) \wedge T} H_s^p f\left(K_s  \overline{X}^{V,\epsi}_{\epsi^{-2}s}\right)^p ds\bigg] \notag\\
		&\quad \leqslant  4^pC_f^p\int_0^{\epsi} \bE\bigg[H^p_s \Big(K^{pq'}_s  \Big|\overline{X}^{V,\epsi}_{\epsi^{-2}s}\Big|^{pq'}+1\Big)\bigg]ds \notag \\
		&\quad\quad+ 8^pC_f^p\int_{t\wedge(T-\epsi)}^{(t+\epsi) \wedge T} \bE\bigg[H^p_s \Big(K^{pq'}_s  \Big|\overline{X}^{V,\epsi}_{\epsi^{-2}s}\Big|^{pq'}+1\Big)\bigg]ds.
		\label{eq:rem:Veretennikov estimate}
	\end{align}
	Next, apply the Cauchy--Schwarz inequality twice and use the integrability Assumption~\ref{ass.novikov} and \cite[Lemma 5]{veretennikov.97}. This together with the elementary inequality $x^{1/2} \leq x + 1$ for $x \geq 0$ gives for some constant $C$ independent of $s$,
	\begin{align*}
	\bE\bigg[H^p_s \Big(K^{pq'}_s  \Big|\overline{X}^{V,\epsi}_{\epsi^{-2}s}\Big|^{pq'}+1\Big)\bigg] &\leqslant\bE\bigg[H^{2p}_s K^{2pq'}_s\bigg]^{\frac{1}{2}}\bE_\bQ\bigg[\frac{d\bP}{d\bQ}  \Big|X^{V,\epsi}_{\epsi^{-2}s}\Big|^{2pq'}\bigg]^{\frac{1}{2}} +\bE\big[H^p_s\big]  \\
		&\leqslant\bE\bigg[H^{2p}_s K^{2pq'}_s\bigg]^{\frac{1}{2}}\bE\bigg[\left(\frac{d\bP}{d\bQ}\right) \bigg]^{\frac{1}{4}}\bE_\bQ\bigg[ \Big|X^{V,\epsi}_{\epsi^{-2}s}\Big|^{4pq'}\bigg]^{\frac{1}{4}} +\bE\big[H^p_s\big] \notag \\
		&\leqslant C \bE\bigg[H^{2p}_s K^{2pq'}_s\bigg]^{\frac{1}{2}} + \bE\big[H^p_s\big] 	\leqslant C \left(\bE\bigg[H^{2p}_s K^{2pq'}_s\bigg] + 1 \right) + \bE \big[H^p_s\big].
	\end{align*}
	Plugging this into \eqref{eq:rem:Veretennikov estimate}, using Fubini's theorem and Assumption~\ref{ass.integrability} finally gives
	\begin{align*}
		\bE\bigg[&\left|\int_0^t a^{1,\epsi}_s ds - \int_{0}^t H_sf\left(\frac{K_s X^{\epsi}_{s}}{\epsi}\right)ds\right|^p\bigg] \to 0 ~~\mbox{as}~~\epsi\to 0.
	\end{align*}
	This establishes convergence in $L^p$ (and a fortiori in probability) for all $t\in[0,T]$.\footnote{Uniform convergence results would require extending our maximal inequality to sublinear mean-reversion speeds.}
\end{remark}

\appendix
\section{Integrability Results}
\label{app:UI}

In this appendix, we establish moment estimates that are needed at various stages of the proof of Theorem \ref{theorem}. Most importantly, we show in Lemma \ref{lem:U.I} that the $p$-th moment of the expression on the left-hand side of \eqref{eq:thm:up} is uniformly integrable. This is crucial both for the reduction to bounded coefficients in Section~\ref{sec:reduction} and the concatenation argument in Section~\ref{section.concatenation}. The main ingredient to this result is to establish that $X^\epsi/\epsi$ has uniformly bounded moments. This is done in Lemma~\ref{lemma.uniform.moment.bar}.

\subsection{A First Moment Estimate}
The following result is the key ingredient for Lemma \ref{lem:U.I}. It is also used in the proof of Lemma~\ref{lem:ergodic elementary}.
\begin{lemma}
	\label{lemma.uniform.moment.bar}
Let $N \geqslant 2$ be a real number. Suppose that Assumption~\ref{ass.novikov} is satisfied and 
	\begin{align*}
	\label{eq:lemma.uniform.moment.bar:assumption}
	&\bE\left[\sup_{u \in [0, T]} c_u^{2N} \right] +\bE\left[\sup_{u\in[0,T]}\left(L_{u}c_{u}\wedge M_{u}\right)^{-\frac{2(q+1)N}{q-1}}\right]<\infty \qquad \mbox{if}~~q>1,
	\end{align*}
	or
	\begin{align*}
  \bE\left[\sup_{u \in [0, T]} c_u^{2N} \right]<\infty \quad \text{and}\quad  \mathrm{essinf}_{u\in[0,T]}(L_{u}c_{u}\wedge M_{u})>0 \qquad \mbox{if}~~ q=1.
	\end{align*}
Then for each $0\leqslant n\leqslant N$, there exists a constant $\overline{C}_n > 0$ such that for all $t \in [0, T]$ and $\epsi > 0$,
	\begin{equation}
	\label{eq:lemma.uniform.moment.bar:Q}
	\bE_{\bQ}\left[\left|\frac{X^\epsi_{t}}{\epsi}\right|^n\right]\leqslant \overline{C}_n,
	\end{equation}
where $\bQ$ is the probability measure defined in \eqref{eq:proba.change.drift.Delta}.
\end{lemma}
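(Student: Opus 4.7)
The plan is a Lyapunov-function argument applied, under $\bQ$, to the rescaled process $Y^\epsi_t:=X^\epsi_t/\epsi$. Thanks to the Girsanov transformation of Section~\ref{section.proof.ex}, $b$ is killed in the drift and
\[
dY^\epsi_t=-\frac{c_tL_t}{\epsi^2}\,g(M_tY^\epsi_t)\,dt+\frac{\sqrt{c_t}}{\epsi}\,dW^{\bQ}_t,\qquad Y^\epsi_0=x^\epsi_0/\epsi.
\]
By Jensen's inequality it is enough to treat $n=N$, so I apply It\^o's formula to the smooth Lyapunov function $V(y):=(1+y^2)^{N/2}$, which is comparable to $|y|^{N}$ at infinity. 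Using the oddness and non-negativity on $\bR_+$ of $g$ (so that $y\,g(My)\geq 0$) together with the superlinear growth \eqref{eq:growth.g}, the drift of $dV(Y^\epsi_t)$ can be bounded, for $|y|$ sufficiently large, by
\[
-\,\frac{\alpha_1\,c_tL_tM_t^q}{\epsi^2}\,|y|^{N-1+q}\;+\;\frac{\alpha_2\,c_t}{\epsi^2}\,|y|^{N-2},
\]
the first term being the mean-reverting contribution and the second the It\^o correction.

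Since $q\geq 1$ implies $N-1+q>N-2$, a weighted Young inequality with weight $L_tM_t^q$ absorbs half of the correction term into the mean-reverting one at the cost of an additive error of order $c_t\,(L_tM_t^q)^{-(N-2)/(q+1)}/\epsi^2$ (for $N>2$; if $N=2$, the correction is a mere constant). Combining this with $|y|^{N-1+q}\gtrsim V(y)$ on $\{|y|\geq 1\}$ and a bounded $\{|y|<1\}$-correction yields a Lyapunov-type inequality
\[
dV(Y^\epsi_t)\;\leq\;\bigl(-\lambda_t\,V(Y^\epsi_t)\,+\,\mu_t\bigr)\,dt\;+\;dN_t,
\]
where $\lambda_t$ is a positive multiple of $c_tL_tM_t^q/\epsi^2$, the ratio $\mu_t/\lambda_t$ is controlled by a power of $(L_tM_t^q)^{-1}$, and $N$ is a local martingale.

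Multiplying by the integrating factor $e^{\Lambda_t}$ with $\Lambda_t:=\int_0^t\lambda_s\,ds$, integrating, and localizing $N$ into a true martingale (justified by the finiteness of the first few $\bQ$-moments of $Y^\epsi$ provided by \eqref{eq:proposition:SDE.proof.existence:moment condition} via Bessel comparison and Assumption~\ref{ass.novikov}), I take $\bQ$-expectation and use the pointwise bound $\int_0^t e^{\Lambda_s-\Lambda_t}\mu_s\,ds\leq\sup_{s\leq T}(\mu_s/\lambda_s)$ to obtain
\[
\bE_\bQ[V(Y^\epsi_t)]\;\leq\;V(x^\epsi_0/\epsi)\;+\;\bE_\bQ\Big[\sup_{s\leq T}\mu_s/\lambda_s\Big].
\]
The initial term is uniformly bounded by the hypothesis $\limsup_\epsi|x^\epsi_0|/\epsi<\infty$. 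For the supremum, the elementary inequality $L_sM_s^q\geq (L_sc_s\wedge M_s)^{q+1}/c_s$ reduces matters to a product of powers of $\sup_sc_s$ and $\sup_s(L_sc_s\wedge M_s)^{-1}$, and H\"older's inequality with a conjugate pair tailored to $q$ and $N$ closes the estimate using the moment hypotheses.

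The main obstacle is the delicate bookkeeping of the exponents produced by the Young and H\"older steps: the precise form $2(q+1)N/(q-1)$ of the hypothesized negative moment on $L_sc_s\wedge M_s$ is tailored so that, once combined with the factor $c_s$ arising from the elementary bound above, the $c_s^{2N}$-moment assumption suffices to close the H\"older inequality. The case $q=1$ is truly singular—both the Young conjugate $(q+1)/(q-1)$ and the hypothesis exponent blow up—and is why the essinf condition on $L_sc_s\wedge M_s$ is needed instead: it provides a deterministic lower bound on the mean-reversion rate $\lambda_t$, reducing the integrating-factor argument to a pathwise Gronwall estimate.
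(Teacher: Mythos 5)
You take a genuinely different route from the paper — a Lyapunov function with a stochastic integrating factor rather than the paper's approach of taking $\bQ$-expectations first (to define the deterministic function $J_{n,\epsi}(t)=\bE_\bQ[|\overline X^\epsi_t|^n]$), decoupling the random coefficients from the moments via the \emph{reverse} H\"older inequality, and then applying the elementary ODE bound of Lemma~\ref{lem:ODE.bound}. The paper also works with the time-changed process $\overline X^\epsi_s = X^\epsi_{\epsi^2 s}/\epsi$, which removes the explicit $\epsi^{-2}$ from the coefficients and makes the martingale-integrability and reverse-H\"older steps cleaner, and it handles the It\^o correction term by induction on $n$ rather than by Young's inequality. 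These structural differences aside, there is a genuine gap in your argument.

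The problem is the step where you multiply the pathwise Lyapunov inequality
\begin{equation*}
dV(Y^\epsi_t)\leq(-\lambda_t V(Y^\epsi_t)+\mu_t)\,dt+dN_t
\end{equation*}
by the \emph{random} integrating factor $e^{\Lambda_t}$ with $\Lambda_t=\int_0^t\lambda_s ds$, and then take $\bQ$-expectations. Rearranging gives the pathwise inequality
\begin{equation*}
V(Y^\epsi_t)\leq e^{-\Lambda_t}V(Y^\epsi_0)+\int_0^t e^{\Lambda_s-\Lambda_t}\mu_s\,ds+e^{-\Lambda_t}\int_0^t e^{\Lambda_s}\,dN_s,
\end{equation*}
and the last term is the issue: $e^{-\Lambda_t}\int_0^t e^{\Lambda_s}dN_s$ is not a (local) martingale evaluated at time $t$, and its $\bQ$-expectation is not zero. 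Here $\Lambda_t$ is $\cF_t$-measurable and driven by the same adapted coefficients $(c,L,M)$ (and hence correlated with the increments of $N$ through $W^\bQ$), so the cross term cannot be dropped. Localization does not rescue the argument: you get a true martingale $\int_0^{t\wedge\tau_n}e^{\Lambda_s}dN_s$, but the prefactor $e^{-\Lambda_{t\wedge\tau_n}}$ is still a random variable correlated with it. This is precisely the difficulty the paper sidesteps by taking expectations before integrating: it bounds $\bE_\bQ[-\lambda_t V_t]$ by $-C_nJ_{n,\epsi}(t)+\text{const}$ through the \emph{reverse} H\"older inequality (using the negative-moment assumption on $L_uc_u\wedge M_u$ to produce the constant $C_n$), thereby turning the problem into a deterministic ODE inequality with deterministic slope $-C_n$, where the integrating-factor argument (Lemma~\ref{lem:ODE.bound}) is legitimate.

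Your approach could be repaired in the case $q=1$: there the essinf hypothesis gives a \emph{deterministic} lower bound $\lambda_t\geq c/\epsi^2$, so one may replace $\Lambda_t$ by the deterministic $\tilde\Lambda_t:=ct/\epsi^2$, absorb the excess $-(\lambda_t-c/\epsi^2)V_t\leq 0$, and the martingale term $\int_0^t e^{\tilde\Lambda_s}dN_s$ then does have zero expectation after localization. But for $q>1$, Assumption~\ref{ass.integrability} provides only an integrable negative moment of $L_uc_u\wedge M_u$, not a deterministic lower bound, so $\lambda_t$ has no deterministic positive minorant and the fix does not apply. In that regime you would need to decouple the random rate from the moment at the level of expectations, which is exactly what the paper's reverse H\"older step does.
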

\begin{proof}
Note that $\frac{d \bQ}{d \bP}$ is square integrable under $\bP$ by Assumption~\ref{ass.novikov} and Novikov's criterion. The Cauchy-Schwarz inequality implies that
	\begin{equation*}
		\bE_{\bQ}\left[\left|A\right| \right] \leqslant \bE\left[A^2\right]^\frac{1}{2}  \bE\left[\left(\frac{d \bQ}{d \bP}\right)^2 \right]^{\frac{1}{2}} < \infty
	\end{equation*}
	for any random variable $A$ that is square-integrable under $\bP$. This estimate will be used throughout the proof without further mention.
	
Define the process $(\overline{X}^\epsi_s)_{s \in [0, \epsi^{-2} T]}$ by
	\begin{equation*}
	\overline{X}^\epsi_s := X^\epsi_{\epsi^2 s}/\epsi.
	\end{equation*}
	Then, $\overline{X}^\epsi$ satisfies under $\bQ$ the SDE
	\begin{equation}
	\label{eq:pf:lemma.uniform.moment.bar:SDE Xbar}
	d\overline{X}^\epsi_s=-L_{\epsi^2s}c_{\epsi^2s}g\left(M_{\epsi^2s}\overline{X}^\epsi_s\right)ds+\sqrt{c_{\epsi^2s}}d\overline{W}^{\epsi,\bQ}_s, \quad \overline{X}^\epsi_0 = x^\epsi_0/\epsi.
	\end{equation}
	Moreover, for $2\leqslant n\leqslant N$, the $n$-th power of $\left|\overline{X}^\epsi\right|$ satisfies the SDE
	\begin{align}
	d\left|\overline{X}^\epsi_s\right|^n =& \left(-n\ \overline{X}^\epsi_s \left|\overline{X}^\epsi_s\right|^{n-2}L_{\epsi^2s}c_{\epsi^2s}g\left(M_{\epsi^2s}\overline{X}^\epsi_s\right)+\frac{n(n-1)}{2}c_{\epsi^2s}\left|\overline{X}^\epsi_s\right|^{n-2} \right) ds \notag \\
	&+n\ \sign\left(\overline{X}^\epsi_s\right)\left|\overline{X}^\epsi_s\right|^{n-1}\sqrt{c_{\epsi^2s}}d\overline{W}^{\epsi,\bQ}_s, \quad \left|\overline{X}^\epsi_0\right|^n = \left(x^\epsi_0/\epsi\right)^n,
	\label{eq:pf:lemma.uniform.moment.bar:SDE Xbarn}
	\end{align}
	where $\overline{W}^{\epsi,\bQ}$ is a $\bQ$-Brownian motion. 
	
	Comparing \eqref{eq:pf:lemma.uniform.moment.bar:SDE Xbarn} for $n=2$ to the SDE
	\begin{equation*}
	d\overline{Y}^\epsi_s=c_{\epsi^2s} ds +2 \sqrt{\overline{Y}^\epsi_s}\sqrt{c_{\epsi^2s}}d\overline{B}^{\epsi,\bQ}_s, \quad \overline{Y}^\epsi_0 =  (x^\epsi_0)^2/\epsi^2,
	\end{equation*}
	where $\overline{B}^{\epsi,\bQ}_s=\int_{0}^{s}\text{sgn}\left(\overline{X}^\epsi_u\right)d\overline{W}^{\epsi,\bQ}_u$ for $s\in[0,\epsi^{-2}T]$, we obtain by Lemma \ref{lem:app:comparison} that 
	\begin{equation*}
	\bQ\left[\left|\overline{X}^\epsi_s\right|^2 \leqslant \overline{Y}^\epsi_s, \text{ for all } s \in [0, \epsi^{-2} T]\right] = 1.
	\end{equation*}
By Lemma~\ref{lem:app:existence:SDE:2} and the integrability assumption on $c$ this implies that $\sup_{t\in [0, \epsi^2T]}|\overline{X}^\epsi_t|$ has moments of orders up to $2N$ under $\bQ$. Together with H\"older's inequality, it follows that, for each $2\leqslant n\leqslant N$,
	\begin{equation*}
	\bE_{\bQ}\left[\int_0^{\epsi^{-2}T} n^2 \left|\overline {X}^\epsi_s\right|^{2(n-1)} c_{\epsi^2s} ds  \right] \leqslant n^2 \epsi^{-2} T   \bE_{\bQ}\left[\sup_{s \in [0, \epsi^{-2}T]} \left(\overline{Y}^\epsi_s\right)^{n}  \right]^{\frac{n-1}{n}}\bE_{\bQ}\left[\sup_{u \in [0, T]}c_u^{n} \right]^{\frac{1}{n}} < \infty.
	\end{equation*}
Hence, the local martingale term in \eqref{eq:pf:lemma.uniform.moment.bar:SDE Xbarn} is a $\bQ$-martingale.
	
We now show that the supremum of the positive part of the integrand in the $ds$-term in \eqref{eq:pf:lemma.uniform.moment.bar:SDE Xbarn} is $\bQ$-integrable. To this end, we compute
\begin{align*}
&\sup_{s \in [0, \epsi^{-2} T]} \left\{\left(-n\ \overline{X}^\epsi_s \left|\overline{X}^\epsi_s\right|^{n-2}L_{\epsi^2s}c_{\epsi^2s}g\left(M_{\epsi^2s}\overline{X}^\epsi_s\right)+\frac{n(n-1)}{2}c_{\epsi^2s}\left|\overline{X}^\epsi_s\right|^{n-2} \right)^+\right\}\\
&\qquad\qquad\qquad\qquad\qquad\qquad\qquad\qquad\qquad\qquad\qquad\qquad \leqslant  \sup_{s \in [0, \epsi^{-2} T]} \frac{n(n-1)}{2}c_{\epsi^2s}\left|\overline{X}^\epsi_s\right|^{n-2}.
\end{align*}
Now the claim follows from the Cauchy-Schwarz inequality, the stated integrability assumptions and the fact that $\sup_{t\in [0, \epsi^{-2}T]}|\overline{X}^\epsi_t|$ has moments of orders up to $2N$ under $\bQ$.

Thus, the function $j_{n,\epsi} : [0, \epsi^{-2} T] \to \mathbb{R} \cup \{-\infty\}$ given by
	\begin{equation}
\label{eq:ODI}
j_{n,\epsi}(t)=-n\bE_{\bQ}\left[\overline{X}^\epsi_t \left|\overline{X}^\epsi_t\right|^{n-2}L_{\epsi^2t}c_{\epsi^2t}g\left(M_{\epsi^2t}\overline{X}^\epsi_t\right)\right]+\frac{n(n-1)}{2}\bE_{\bQ}\left[c_{\epsi^2t}\left|\overline{X}^\epsi_t\right|^{n-2}\right]
\end{equation}
is well defined and bounded from above. Define the function $J_{n,\epsi}: [0, \epsi^{-2} T] \to \mathbb{R}$ by $ J_{n,\epsi}(t)=\bE_{\bQ}\left[\left|\overline{X}^\epsi_t\right|^n\right]$. After taking $\bQ$-expectations in \eqref{eq:pf:lemma.uniform.moment.bar:SDE Xbarn}, Fubini's theorem and the fundamental theorem of calculus for Lebesgue-measurable functions show that $J_{n,\epsi}$ is absolutely continuous and almost everywhere differentiable with derivative $j_{n,\epsi}$. In particular $j_{n,\epsi}$ is almost everywhere finite and Lebesgue-integrable.

We proceed to derive a differential inequality for $J_{n,\epsi}$. To this end, we first use the fact that $g$ is odd and that by condition~\eqref{eq:growth.g},  $x g(x) \geqslant 0$ and $xg(x) \geqslant a \left|x\right|^{q+1} - \widetilde a$ for some constants $a, \widetilde a > 0$ and all $x \in \mathbb{R}$. This yields
	\begin{align}
	&j_{n,\epsi}(t) \leqslant -n\bE_{\bQ}\left[\frac{L_{\epsi^2t}c_{\epsi^2t}\wedge M_{\epsi^2 t}}{M_{\epsi^2 t}}\left|\overline{X}^\epsi_t\right|^{n-2}\left(a\left|\overline{X}^\epsi_t\right|^{q+1}M^{q+1}_{\epsi^2 t}-\widetilde{a}\right)\right]+\frac{n(n-1)}{2}\bE_{\bQ}\left[c_{\epsi^2t}\left|\overline{X}^\epsi_t\right|^{n-2}\right]\notag \\
	&\quad\leqslant -an\bE_{\bQ}\left[\left(L_{\epsi^2t}c_{\epsi^2t}\wedge M_{\epsi^2 t}\right)^{q+1}\left|\overline{X}^\epsi_t\right|^{n-1+q}\right]+\bE_{\bQ}\left[\left(n \widetilde a + \frac{n(n-1)}{2}c_{\epsi^2t}\right)\left|\overline{X}^\epsi_t\right|^{n-2}\right]. 
	\label{eq:pf:lemma.uniform.moment.bar:h prime estimate} 
	\end{align}
	We now estimate the first term on the right-hand side of~\eqref{eq:pf:lemma.uniform.moment.bar:h prime estimate}. We consider two cases separately for the growth condition~\eqref{eq:growth.g}: strictly superlinear growth ($\liminf_{x \to \infty} \frac{g(x)}{x^q} >0$ for $q>1$) and linear growth ($\liminf_{x \to \infty} \frac{g(x)}{x} >0$). In the first case, we use the reverse H\"older inequality (with power $\frac{n}{n+q-1}$) and the inequality $\left|x\right|^{\frac{n+q-1}{n}}\geqslant \left|x\right|-1$ for $x\in\bR$. This gives
	\begin{align*}
	\bE_{\bQ}\bigg[\left(L_{\epsi^2t}c_{\epsi^2t}\wedge M_{\epsi^2 t}\right)^{q+1}&\left|\overline{X}^\epsi_t\right|^{n-1+q}\bigg] \geqslant \bE_{\bQ}\left[\left|\overline{X}^\epsi_t\right|^{n}\right]^{\frac{n+q-1}{n}} \bE_{\bQ}\left[\left(L_{\epsi^2t}c_{\epsi^2t}\wedge M_{\epsi^2 t}\right)^{-\frac{(q+1)n}{q-1}}\right]^{-\frac{q-1}{n}} \\
	&\qquad\geqslant\left(\bE_{\bQ}\left[\left|\overline{X}^\epsi_t\right|^{n}\right]-1\right)\bE_{\bQ}\left[\sup_{u\in[0,T]}\left(L_{u}c_{u}\wedge M_{u}\right)^{-\frac{(q+1)n}{q-1}}\right]^{-\frac{q-1}{n}}.
	\end{align*}
	In the second case, we obtain similarly from Assumption~\ref{ass.integrability} (see Remark~\ref{rem:integrability}) that
	\begin{align*}
	\bE_{\bQ}\Big[\left(L_{\epsi^2t}c_{\epsi^2t}\wedge M_{\epsi^2 t}\right)^{2}&\left|\overline{X}^\epsi_t\right|^{n}\Big] \geqslant \text{ ess\ inf} _{u\in[0,T]}\left\{(L_{u}c_{u}\wedge M_{u})^2\right\}\bE_{\bQ}\Big[\left|\overline{X}^\epsi_t\right|^{n}\Big] 
	\end{align*}
Plugging this into \eqref{eq:pf:lemma.uniform.moment.bar:h prime estimate} and setting 
$$
C_n := 
\begin{cases} 
a n\ \bE_{\bQ}\left[\sup_{u\in[0,T]}\left(L_{u}c_{u}\wedge M_{u}\right)^{-\frac{(q+1)n}{q-1}}\right]^{-\frac{q-1}{n}}, &\mbox{if $q>1$,}\\  
a n\ \mathrm{ ess\ inf} _{u\in[0,T]}\left\{(L_{u}c_{u}\wedge M_{u})^2\right\}, &\mbox{if $q=1$,}
\end{cases}
$$
this yields
	\begin{align}
	\label{eq:hn.epsi}
	&j_{n,\epsi}(t) \leqslant - C_n J_{n,\epsi}(t) + C_n\one_{\{q>1\}} + \bE_{\bQ}\left[\left(n \widetilde a + \frac{n(n-1)}{2}c_{\epsi^2t}\right)\left|\overline{X}^\epsi_t\right|^{n-2}\right]. 
	\end{align}
We now establish \eqref{eq:lemma.uniform.moment.bar:Q} for $n=2$ and use it to prove the result for general integer $n\geqslant 3$ by induction. For $n = 2$, the $\bQ$-expectation on the right hand side of \eqref{eq:hn.epsi} can be bounded by $\bE_{\bQ}\left[2 \widetilde a + \sup_{u \in [0, T]} c_{u}\right]$ and  then \eqref{eq:lemma.uniform.moment.bar:Q} follows from Lemma \ref{lem:ODE.bound}. For $n\geqslant 3$, assume that $J_{n-1,\epsi}(t)\leqslant \overline{C}_{n-1}$ for some $\overline{C}_{n-1}>0$. Then H\"older's inequality and the inequality $(a +b)^{n-1} \leq 2^{n-1}(a^{n-1}+b^{n-1})$ for $a, b \geq 0$ give
	\begin{align*}
		&j_{n,\epsi}(t) \leqslant - C_n J_{n,\epsi}(t) + C_n\one_{\{q>1\}} + 2 \left((n\tilde a)^{n-1} +\Big(\frac{n(n-1)}{2}\Big)^{n-1}\bE_{\bQ}\left[\sup_{u \in [0, T]} c_{u}^{n-1}\right]\right)^{\frac{1}{n-1}} \overline{C}^{\frac{n-2}{n-1}}_{n-1}.
	\end{align*}
Now \eqref{eq:lemma.uniform.moment.bar:Q} follows from Lemma~\ref{lem:ODE.bound}. 
\end{proof}

The following differential inequality is used in the proof of Lemma \ref{lemma.uniform.moment.bar}.
\begin{lemma}
	\label{lem:ODE.bound}
	Let $C,C' >0 \in\bR$ be constants and $J: [0, T] \to \mathbb{R}$ be an absolutely continuous function with almost everywhere derivative $j$ satisfying the differential inequality
	\begin{equation*}
		j(t)\leqslant C-C'J(t)\; \text{a.e}., \quad J(0) = J_0.
	\end{equation*}
	Then there exists a positive constant $C''$ such that $J(t)\leqslant C''$ on $[0,T]$.
\end{lemma}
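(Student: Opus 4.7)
The plan is to apply the standard integrating-factor trick adapted to the absolutely continuous setting, reducing the differential inequality to a monotonicity statement for an auxiliary function.

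First, I would introduce the shifted function $h(t) := J(t) - C/C'$, which is absolutely continuous on $[0,T]$ with almost-everywhere derivative $h'(t) = j(t)$. The assumed inequality then rewrites as
\begin{equation*}
h'(t) \leqslant -C' h(t) \quad \text{a.e.\ on $[0,T]$.}
\end{equation*}
Next, I would set $\varphi(t) := e^{C' t} h(t)$. As the product of two absolutely continuous functions on the compact interval $[0,T]$, $\varphi$ is itself absolutely continuous, with almost-everywhere derivative
\begin{equation*}
\varphi'(t) = e^{C' t}\bigl(h'(t) + C' h(t)\bigr) \leqslant 0 \quad \text{a.e.}
\end{equation*}

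Since $\varphi$ is absolutely continuous, the fundamental theorem of calculus for Lebesgue integration gives $\varphi(t) - \varphi(0) = \int_0^t \varphi'(s)\,ds \leqslant 0$, so $\varphi$ is non-increasing and $h(t) \leqslant e^{-C' t}h(0)$ for every $t \in [0,T]$. Undoing the shift, this yields the explicit bound
\begin{equation*}
J(t) \leqslant \frac{C}{C'} + \Bigl(J_0 - \frac{C}{C'}\Bigr) e^{-C' t}, \quad t \in [0,T].
\end{equation*}

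The conclusion follows by taking $C'' := \max\{C/C',\, J_0,\, 1\} > 0$, which dominates the right-hand side uniformly in $t \in [0,T]$ (one considers separately the cases $J_0 \geqslant C/C'$, where $J(t) \leqslant J_0$, and $J_0 < C/C'$, where $J(t) \leqslant C/C'$). There is no real obstacle here; the only subtlety worth flagging is that one must invoke absolute continuity (rather than differentiability everywhere) to justify integrating the pointwise-a.e.\ derivative, which is automatic because $\varphi$ inherits absolute continuity from $h$ and the smooth factor $e^{C' t}$.
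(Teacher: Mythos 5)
Your proof is correct and takes essentially the same route as the paper: both rely on the integrating factor $e^{C't}$. You shift by $C/C'$ and observe that $\varphi(t)=e^{C't}(J(t)-C/C')$ is non-increasing, whereas the paper rewrites the inequality as the ODE $j(t)=C_1(t)-C'J(t)$ with $C_1 \leqslant C$ a.e.\ and plugs into the explicit solution formula; the resulting bounds coincide. (Minor remark: the $1$ in your $\max$ is superfluous since $C/C'>0$ already.)
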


\begin{proof}
	Define the function $t\mapsto C_1(t):=j(t)+C'J(t)$. Then $C_1\leqslant C$ a.e. and $J$ satisfies the ODE
	\begin{equation*}
		j(t)= C_1(t) -C'J(t)\; \text{a.e}, \quad J(0) = J_0.
	\end{equation*}
	This ODE has the explicit solution $J(t)={\rm e}^{-C't}\left(J_0+\int_{0}^{t}C_1(s){\rm e}^{C's}ds\right)$, which yields the desired bound:
	\begin{equation*}
		J(t)\leqslant J_0+\frac{C}{C'},\quad t\in[0,T]. \qedhere
	\end{equation*}
\end{proof}
Note that we could not use Gronwall inequality here, as the factor in front of the function $J$ is negative.

\subsection{A Uniform Integrability Result}
With the help of Lemma~\ref{lemma.uniform.moment.bar}, we can now establish our result on uniform integrability:

\begin{lemma}
\label{lem:U.I}
Suppose that Assumptions \ref{ass.novikov} and \ref{ass.integrability} are satisfied. Then:
\begin{align}
\label{eq:lem:U.I}
\sup_{\epsi > 0}\ \bE\Bigg[\int_{0}^{T}H^{p(1+\eta)}_t f\left(\frac{K_t X^\epsi_{t}}{\epsi}\right)^{p(1+\eta)} dt\Bigg] < \infty.
\end{align}

\end{lemma}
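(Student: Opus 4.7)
The plan is to combine the polynomial growth of $f$ from~\eqref{eq:condition.f} with repeated Cauchy--Schwarz inequalities, a measure change back to $\bQ$, and the uniform moment bound of Lemma~\ref{lemma.uniform.moment.bar}. First, by~\eqref{eq:condition.f} and subadditivity of $x\mapsto x^{p(1+\eta)}$ up to a constant, there exists $C>0$ such that
\begin{equation*}
H_t^{p(1+\eta)} f\!\left(\frac{K_t X^\epsi_t}{\epsi}\right)^{\!p(1+\eta)} \leqslant C\, H_t^{p(1+\eta)} K_t^{q'p(1+\eta)}\!\left|\frac{X^\epsi_t}{\epsi}\right|^{q'p(1+\eta)} + C\, H_t^{p(1+\eta)}.
\end{equation*}
After integrating in $t$ and taking expectations, the second term is handled directly by Assumption~\ref{ass.integrability} (which contains the bound on $\bE[\int_0^T H_t^{p(1+\eta)} dt]$), so the task reduces to bounding the first term uniformly in $\epsi>0$.

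Next, apply Fubini's theorem to the first term and estimate pointwise in $t$ via Cauchy--Schwarz:
\begin{equation*}
\bE\!\left[H_t^{p(1+\eta)} K_t^{q'p(1+\eta)}\!\left|\frac{X^\epsi_t}{\epsi}\right|^{q'p(1+\eta)}\right] \leqslant \bE\!\left[\big(H_t K_t^{q'}\big)^{2p(1+\eta)}\right]^{1/2} \bE\!\left[\left|\frac{X^\epsi_t}{\epsi}\right|^{2q'p(1+\eta)}\right]^{1/2}.
\end{equation*}
Integrating over $[0,T]$ and applying Cauchy--Schwarz to the time integral separates the problem into the time-integrated $(H,K)$-factor $\int_0^T \bE[(H_t K_t^{q'})^{2p(1+\eta)}] dt$, which is finite by Assumption~\ref{ass.integrability}, and the remaining supremum $\sup_{\epsi>0,\, t\in[0,T]} \bE[|X^\epsi_t/\epsi|^{2q'p(1+\eta)}]$ to be controlled uniformly.

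For this last quantity, change measure from $\bP$ to $\bQ$ using the Radon--Nikodym density from~\eqref{eq:proba.change.drift.Delta} and invoke Cauchy--Schwarz once more:
\begin{equation*}
\bE\!\left[\left|\frac{X^\epsi_t}{\epsi}\right|^{2q'p(1+\eta)}\right] \leqslant \bE_\bQ\!\left[\left(\frac{d\bP}{d\bQ}\right)^{\!2}\right]^{1/2} \bE_\bQ\!\left[\left|\frac{X^\epsi_t}{\epsi}\right|^{4q'p(1+\eta)}\right]^{1/2}.
\end{equation*}
The first factor equals $\bE_\bP[d\bP/d\bQ]$; writing $d\bP/d\bQ = \mathcal{E}(\int b/\sqrt{c}\, dW)_T \exp(\int_0^T b_t^2/c_t\, dt)$ and applying Cauchy--Schwarz with Novikov's criterion bounds it via Assumption~\ref{ass.novikov}, whose condition $\bE[\exp(8\int_0^T b_t^2/c_t\, dt)]<\infty$ provides more than enough room to iterate this argument a small number of times. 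The second factor is uniformly bounded in $\epsi$ and $t$ by Lemma~\ref{lemma.uniform.moment.bar} applied with $N = 4pq'(1+\eta)\vee 2$, whose moment hypotheses on $\sup_{u\in[0,T]}c_u^{2N}$ and $\sup_{u\in[0,T]}(L_u c_u \wedge M_u)^{-2(q+1)N/(q-1)}$ match exactly the first two terms of Assumption~\ref{ass.integrability}. The main technical subtlety is precisely this exponent book-keeping: the power $q'p(1+\eta)$ on $|X^\epsi/\epsi|$ doubles via the first Cauchy--Schwarz separating $(H,K)$ from the oscillating factor, then doubles again through the measure change, yielding the $4pq'(1+\eta)$ appearing in Assumption~\ref{ass.integrability}; the $\vee 2$ covers the degenerate case when this exponent is small and ensures compatibility with the requirement $N\geqslant 2$ in Lemma~\ref{lemma.uniform.moment.bar}.
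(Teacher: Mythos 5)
Your argument is correct and follows essentially the same route as the paper: polynomial bound on $f$, Cauchy–Schwarz/Hölder to separate the $(H,K)$-factor from the oscillating factor, measure change to $\bQ$, a further Cauchy–Schwarz, and the uniform moment bound from Lemma~\ref{lemma.uniform.moment.bar} with $N = 4pq'(1+\eta)\vee 2$; the exponent bookkeeping matches the first two terms of Assumption~\ref{ass.integrability} exactly as you identify. The only cosmetic difference is that you apply Cauchy–Schwarz pointwise in $t$ and then again on the time integral, whereas the paper applies Hölder once directly on the product measure $\bP\otimes\mathrm{Leb}_{|[0,T]}$ — these are equivalent.
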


\begin{proof}
Using the polynomial bound \eqref{eq:condition.f} for $f$ and H\"older's inequality, we obtain
\begin{align}
\bE\Bigg[&\int_{0}^{T}H^{p(1+\eta)}_tf\left(\frac{K_tX^\epsi_t}{\epsi}\right)^{p(1+\eta)}dt\Bigg] \notag \\
&\leqslant C_f\bE\Bigg[\int_{0}^{T}H^{p(1+\eta)}_t\left|\frac{K_tX^\epsi_t}{\epsi}\right|^{pq'(1+\eta)}dt\Bigg] +C_f\bE\Bigg[\int_{0}^{T}H^{p(1+\eta)}_tdt\Bigg] \notag \\
&\leqslant C_f\bE\Bigg[\int_{0}^{T}\left(H_tK_t^{q'}\right)^{2p(1+\eta)}dt\Bigg]^{\frac{1}{2}}\bE\Bigg[\int_{0}^{T}\left|\frac{X^\epsi_t}{\epsi}\right|^{2pq'(1+\eta)}dt\Bigg]^{\frac{1}{2}} +C_f\bE\Bigg[\int_{0}^{T}H^{p(1+\eta)}_tdt\Bigg].
\label{eq:pf:lem:U.I:Holder}
\end{align}
By Assumption \ref{ass.integrability}, the first expectation in the first term and the last term on the right-hand side of \eqref{eq:pf:lem:U.I:Holder} are finite and independent of $\epsi$. By Fubini's theorem, Lemma~\ref{lemma.uniform.moment.bar}, H\"older's inequality and Assumption~\ref{ass.novikov} the second expectation in the first term is also finite,
\begin{align*}
\bE\Bigg[\int_{0}^{T}\left|\frac{X^\epsi_t}{\epsi}\right|^{2pq'(1+\eta)}dt\Bigg] &= \int_{0}^{ T}\bE\left[\left|\frac{X^\epsi_{t}}{\epsi}\right|^{2pq'(1+\eta)}\right]dt= \int_{0}^{T}\bE_{\bQ}\bigg[\frac{d\bP}{d\bQ}\left|\frac{X^\epsi_{t}}{\epsi}\right|^{2pq'(1+\eta)}\bigg]dt\\
&\leqslant\int_{0}^{T}\bE_{\bQ}\bigg[\bigg(\frac{d\bP}{d\bQ}\bigg)^2\bigg]^{\frac{1}{2}}\bE_{\bQ}\bigg[\left|\frac{X^\epsi_{t}}{\epsi}\right|^{4pq'(1+\eta)}\bigg]^{\frac{1}{2}}dt\\
&=T \bE\bigg[\frac{d\bP}{d\bQ}\bigg]^{\frac{1}{2}} \sqrt{\overline{C}_{4pq'(1+\eta)}}<\infty.
\end{align*}
This concludes the proof.
\end{proof}

\section{A Maximal Inequality for Square-Root Processes}
\label{app:maximal inequality}
In this section, we establish a maximal inequality for square-root processes which is inspired by a result of Peskir \cite{peskir.01}.\footnote{For square-root processes, the arguments from \cite{peskir.01} can be sharpened to obtain constants explicit in the model parameters. This is needed for the application of the estimate in the companion paper \cite{caye.al.17}.}  This estimate is crucial for establishing Lemma \ref{lem.max.inequality} which in turn is necessary to concatenate the infinitesimal estimates from Proposition~\ref{proposition.limit.integrals} to establish Theorem~\ref{theorem} in Section~\ref{section.concatenation}, but also of independent interest. 

\begin{proposition}
\label{prop:CIR process}
Let $(Y_t)_{t \geqslant 0}$ be the unique strong solution of the SDE
\begin{equation}
\label{eq:prop:CIR process:CIR SDE}
d Y_t = \nu (\theta -  Y_t) \, dt + \sigma \sqrt{Y_t} dB_t, \quad Y_0 = y_0,
\end{equation}
where $y_0 > 0$, the constants $\nu, \theta, \sigma > 0$ satisfy $\frac{2\nu\theta}{\sigma^2}<1$, and $(B_t)_{t\geqslant 0}$ is a Brownian motion on some filtered probability space. Set $\gamma := \frac{2 \nu}{\sigma^2}$ and, for $n \in \mathbb{N}$, define
\begin{align*}
C^1(y_0,\gamma,\sigma,n) &:= \left(1+ 8^n \left(y_0^n+ \gamma^{-2n} + 4^n \sigma^n\right) \right)\left(2 + \left(2^n y_0^{n-1} + \frac{2 + 8^n \gamma^{-2n}}{y_0} \right) \left(1 + \frac{12}{y_0} \left((n+1)!\right) \gamma^{-1} \right)\right), \\ 
C^2(y_0, \gamma,n) &:= 4^n \gamma^{-n}\left(2 + \left(2^n y_0^{n-1} + \frac{2 + 8^n \gamma^{-2n}}{y_0} \right) \left(1 + \frac{12}{y_0} \left((n+1)!\right) \gamma^{-1} \right)\right).
\end{align*}
 Then for any $n \geqslant 0$ and any finite stopping time $\tau$,
\begin{align}
\label{pre:CIR process}
\bE\left[\max_{0\leqslant t \leqslant \tau} Y_t^{n}\right]  \leqslant& C^1(y_0,\gamma,\sigma,n) + C^2(y_0,\gamma,n) \bE\left[\log(\tau\vee 1)^n \right].
\end{align}
\end{proposition}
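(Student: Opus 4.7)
I would follow a Peskir-type maximal-inequality argument, combining uniform-in-time moment bounds for the CIR process $Y$ with a Chernoff tail estimate on its running maximum. The $\bE[\log(\tau \vee 1)^n]$ dependence will be extracted from a dyadic decomposition of $[0,\tau]$ into unit-length intervals, on each of which the tail of $Y_t$ is controlled by the exponential integrability of $Y_t$ with rate $\sim\gamma = 2\nu/\sigma^2$ (available under the Feller condition $2\nu\theta/\sigma^2<1$).

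The first step is to obtain uniform bounds $\sup_{t\geq 0}\bE[Y_t^k]\leq c_k(y_0,\gamma,\sigma)$ for every integer $k\geq 0$. Applying It\^o's formula to $y\mapsto y^k$ and taking expectations gives the linear ODE
\begin{equation*}
\tfrac{d}{dt}\bE[Y_t^k] \;=\; \bigl(k\nu\theta + \tfrac{k(k-1)\sigma^2}{2}\bigr)\bE[Y_t^{k-1}] \;-\; k\nu\,\bE[Y_t^k],
\end{equation*}
and induction on $k$, in the spirit of Lemma~\ref{lemma.uniform.moment.bar}, delivers the uniform bound with explicit constants. In parallel, the explicit Laplace-transform formula for CIR yields $\sup_{t\geq 0}\bE[e^{\lambda Y_t}] \leq D_\lambda < \infty$ for every $\lambda \in (0,\gamma)$, with $D_\lambda$ explicit in $y_0$, $\gamma$ and the Feller ratio; I would set $\lambda := \gamma/2$ for concreteness.

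The second step is the dyadic decomposition. For integer $N\geq 1$, write
\begin{equation*}
\max_{0\leq t\leq N\wedge\tau} Y_t \;\leq\; \max_{0\leq k\leq N\wedge\tau} Y_k \;+\; \max_{0\leq k\leq N\wedge\tau}\;\sup_{s\in[k,(k+1)\wedge\tau]}|Y_s - Y_k|.
\end{equation*}
For the first term, a union bound combined with Markov's inequality gives $\bP(\max_{k\leq N}Y_k\geq y)\leq (N+1)D_{\gamma/2}\,e^{-\gamma y/2}$. Integrating $n y^{n-1}\bP(\cdot\geq y)\,dy$ and splitting at the optimal truncation level $y^\star := (2/\gamma)\log((N+1)D_{\gamma/2})$ produces a bound of order $(2/\gamma)^n(\log N)^n$ plus an $N$-independent remainder (matching the $\gamma^{-n}$ factor in $C^2$). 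The fluctuation term is handled by the BDG inequality applied to the martingale increment $\int_k^{(k+1)\wedge\tau}\sigma\sqrt{Y_s}\,dB_s$; combined with the uniform moment bound from step one, this yields an estimate independent of $k$, contributing to the $\sigma^n$-terms of $C^1$. Finally, to pass from the deterministic horizon $N$ to the random $\tau$, I would use $\bE[f(\tau)] = \sum_{k\geq 1}\bE[f(\tau)\one_{\{k\leq\tau<k+1\}}]$, which converts the $(\log N)^n$ dependence into $\bE[\log(\tau\vee 1)^n]$ and delivers the claimed form $C^1 + C^2\,\bE[\log(\tau\vee 1)^n]$.

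\textbf{Main obstacle.} The technically demanding part is tracking the \emph{explicit} structure of the constants: the factor $\gamma^{-n}$ in $C^2$ (and $\gamma^{-2n}$ in $C^1$) arises from the choice $\lambda \sim \gamma/2$ in the Chernoff step; the $4^n\sigma^n$ term in $C^1$ comes from the BDG constants applied to the fluctuation increment; the combinatorial factor $(n+1)!$ stems from the iterated moment recursion in the induction on $k$; and the $y_0^n$ terms track the initial value in both the exponential integrability bound and the fluctuation estimate. Aligning the truncation level $y^\star$, the BDG constant, and the inductive moment constants so that they match the stated numerical form, rather than being absorbed into an abstract $C_n$, is the main bookkeeping challenge of the proof.
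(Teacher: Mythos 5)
Your proposal follows a genuinely different route from the paper, but the route has a critical gap that Lenglart's domination principle is designed to close. The paper's argument is a ``true'' Peskir/Lenglart argument: it computes the scale function and speed measure of the power process $Y^n$, builds $F(x) = \int_{y_0^n}^x m(y_0^n, z]\,S'(z)\,dz$ so that the generator of $Y^n$ applied to $G := \mathbf{1}_{[y_0^n,\infty)}F$ equals $\mathbf{1}_{(y_0^n,\infty)}$, derives $\bE[G(Y_\tau^n)] \leqslant \bE[\tau]$ from It\^o's formula, and invokes Lenglart's domination principle \cite[Lemma~2.1]{peskir.01} to obtain $\bE[\sup_{t\leqslant\tau}Y_t^n] \leqslant \bE[\widetilde H(\tau)]$ for \emph{all} finite stopping times $\tau$; the explicit constants then come from two-sided estimates on $F$ and its inverse $H$. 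Your sketch replaces this machinery with a unit-interval partition, exponential moments, a union bound, and BDG fluctuation control --- an entirely different set of tools.

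The gap is the final step, passing from a deterministic horizon $N$ to the random stopping time $\tau$. Writing $\bE[\max_{t\leqslant\tau}Y_t^n] = \sum_{k}\bE[\max_{t\leqslant\tau}Y_t^n\,\mathbf{1}_{\{k\leqslant\tau<k+1\}}]$ and then inserting the deterministic-horizon bound block by block implicitly decouples $\mathbf{1}_{\{k\leqslant\tau<k+1\}}$ from $\max_{t\leqslant k+1}Y_t^n$, which is not justified: $\tau$ is an arbitrary stopping time correlated with the entire path of $Y$, so $\bE[\max_{t\leqslant k+1}Y_t^n\,\mathbf{1}_{\{k\leqslant\tau<k+1\}}]$ cannot be replaced by the unconditional maximum times $\bP(k\leqslant\tau<k+1)$, and Cauchy--Schwarz would yield $\sum_k(\log k)^n\,\bP(k\leqslant\tau<k+1)^{1/2}$, which does not reduce to $\bE[\log(\tau\vee 1)^n]$. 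Lenglart's lemma does precisely this random-horizon work, converting an inequality $\bE[Z_\rho]\leqslant\bE[A_\rho]$ valid for \emph{all} stopping times $\rho$ into a maximal inequality, and your proposal has no substitute. A repair would condition on $\cF_k$, control the tail of $\max_{s\in[0,1]}Y_s$ from an arbitrary random starting level via the uniform moment bounds, and re-aggregate; that is a substantial extra argument and would not produce the stated constants without effectively re-deriving them. A secondary sign that the proofs diverge: the paper's two-sided $F$-estimates use $\gamma\theta<1$ (i.e.\ $2\nu\theta/\sigma^2<1$) directly, whereas nothing in your sketch invokes this hypothesis.
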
 

\begin{proof}
For $n = 0$, the claim is trivial. So fix $n > 0$. The SDE satisfied by $Y^n$ for $n\in\bN\backslash\{0\}$ is
\begin{align*}
d\left(Y_t^n\right) =\left(n\left(\nu\theta+\frac{n-1}{2}\sigma^2\right)\left(Y_t^n\right)^{\frac{n-1}{n}}-n\nu\left(Y_t^n\right)\right)dt+n\sigma\left(Y_t^n\right)^{\frac{n-\frac{1}{2}}{n}}dB_t, \quad Y^n_0=y^n_0.
\end{align*}
We sharpen the arguments of \cite{peskir.01} in the present context. We first compute for $z\geqslant y^n_0$ the scale function $S$ and the speed measure $m$ of the diffusion $Y^n$. The derivative of $S$ is given by
\begin{align*}
S^\prime(z) &= \exp\left(-2\int_{y^n_0}^{z}\frac{n\left(\nu\theta+\frac{n-1}{2}\sigma^2\right)y^{\frac{n-1}{n}}-n\nu y}{n^2\sigma^2y^{\frac{2n-1}{n}}}dy\right)=  \bigg(\frac{z}{y^n_0}\bigg)^{-\frac{2\left(\nu\theta+\frac{n-1}{2}\sigma^2\right)}{n\sigma^2}}\exp\left(\frac{2\nu}{\sigma^2}\Big(z^{\frac{1}{n}}-y_0\Big)\right).
\end{align*}
The speed measure $m$ in turn is given by
\begin{align*}
m(y^n_0,z] &= \int_{y^n_0}^{z}\frac{2}{n^2\sigma^2S'(y)}y^{-\frac{2n - 1}{n}}dy\\
&= \frac{2}{n^2\sigma^2}  \exp\left(\frac{2\nu y_0}{\sigma^2}\right)y^{-\frac{2\nu\theta}{\sigma^2}+1-n}_0 \int_{y^n_0}^{z}y^{\frac{-(2n -1)\sigma^2 + 2\left(\nu\theta+\frac{n-1}{2}\sigma^2\right)}{n\sigma^2}}\exp\left(-\frac{2\nu}{\sigma^2}y^{\frac{1}{n}}\right)dy\\
&=\frac{2}{n^2\sigma^2}  \exp\left(\frac{2\nu y_0}{\sigma^2}\right)y^{-\frac{2\nu\theta}{\sigma^2}+1-n}_0\int_{y^n_0}^{z}y^{\frac{2 \nu \theta }{n\sigma^2}-1}\exp\left(-\frac{2\nu}{\sigma^2}y^{\frac{1}{n}}\right)dy\\
&=\frac{2}{n^2\sigma^2}  \exp\left(\frac{2\nu y_0}{\sigma^2}\right)y^{-\frac{2\nu\theta}{\sigma^2}+1-n}_0\int_{y_0}^{z^{\frac{1}{n}}}x^{\frac{2\nu\theta}{\sigma^2}-1}\exp\left(-\frac{2\nu}{\sigma^2}x\right)dx.
\end{align*}
Define the function $F: [y^n_0, \infty) \to [0, \infty)$ (as in \cite[Theorem 2.5]{peskir.01}) for $x \geqslant y^n_0$ by
\begin{align}
F(x) = \int_{y^n_0}^{x}m(y^n_0,z]S'(z)dz &= \frac{2}{n\sigma^2}\int_{y^n_0}^{x}z^{-\frac{2\left(\nu\theta+\frac{n-1}{2}\sigma^2\right)}{n\sigma^2}}\exp\left(\frac{2\nu}{\sigma^2}z^{\frac{1}{n}}\right)\int_{y_0}^{z^{\frac{1}{n}}}y^{\frac{2\nu\theta}{\sigma^2}-1}\exp\left(-\frac{2\nu}{\sigma^2}y\right)dy\ dz \notag \\
&= \frac{2}{\sigma^2}\int_{y_0}^{x^{\frac{1}{n}}} v^{-\frac{2\nu\theta}{\sigma^2}} {\rm e}^{\frac{2\nu}{\sigma^2} v}\int_{y_0}^{v}y^{\frac{2\nu\theta}{\sigma^2}-1}{\rm e}^{-\frac{2\nu}{\sigma^2} y} dy\ dv \notag\\
&= \frac{2}{\sigma^2}\int_{y_0}^{x^{\frac{1}{n}}} v^{-\gamma \theta} {\rm e}^{\gamma v}\int_{y_0}^{v}y^{\gamma \theta -1}{\rm e}^{-\gamma y} dy\ dv,
\label{eq:Peskir:F}
\end{align}
where $\gamma := \frac{2 \nu}{\sigma^2}$. It is not difficult to check that $F$ is strictly increasing with $\lim_{x \to \infty} F(x) = +\infty$, continuously differentiable on $[y^n_0, \infty)$ and twice continuously differentiable on $(y^n_0, \infty)$.
Next, define the function $G: [0, \infty) \to [0, \infty)$ by $G(x)=\one_{\{x\geqslant y^n_0\}}F(x)$. Note that $G$ is twice continuously differentiable everywhere, except at $y^n_0$. Applying It\^o's formula as in \cite[Exercise 4.20]{shreve.04} to $G(Y_t^n)$ (note that the infinitesimal generator of the diffusion $Y^n$ applied to $G$ gives $0$ on $[0,y^n_0)$ and $1$ on $(y^n_0,+\infty)$, and that $G(Y^n_0)=0$), we obtain
\begin{equation*}
G(Y_t^n)\leqslant t+\int_{0}^{t} G^\prime\left(Y^n_t\right)n \sigma \left(Y_t^n\right)^{\frac{n-\frac{1}{2}}{n}} dB_s.
\end{equation*}
By localization of $Y^n$ and the monotone convergence theorem ($G$ is nondecreasing), it follows that, for any finite stopping time $\tau$,
\begin{equation*}
\bE\left[G(Y_\tau^n)\right] \leqslant  \bE[\tau].
\end{equation*}
Denote by $H: [0, \infty) \to [y^n_0, \infty)$ the inverse of $F$, which like $F$ is increasing and continuously differentiable on $[0, \infty)$. Then by Lenglart's domination principle in the form of \cite[Lemma 2.1]{peskir.01} with $Z_t=G(Y^n_t)$ and $A_t=t$, we obtain for any finite stopping time $\tau$,\footnote{Note that the assumption that $H(0)=0$ can be replaced by $H(0) \geqslant 0$ in \cite[Lemma 2.1]{peskir.01}.}
\begin{equation*}
\bE\left[\sup_{0\leqslant t\leqslant\tau} H(G(Y_t^n))\right]\leqslant\bE\left[\widetilde{H}\left(\tau\right)\right],
\end{equation*}
where the function $\widetilde{H}: [0, \infty) \to [0, \infty]$ is given by
\begin{equation*}
\widetilde{H}(y)=y\int_{y}^{\infty}\frac{1}{z}H^\prime(z)dz+2H(y).
\end{equation*}
Now using that $H(G(x)) = x$ for $x \in [y^n_0, \infty)$ and $H(G(x)) = H(0) = y^n_0 \geqslant x$ for $x \in [0, y^n_0)$, we obtain for any finite stopping time $\tau$,
\begin{align}
\label{eq:Peskir.ineq}
\bE\left[\sup_{0\leqslant t\leqslant\tau} Y_t^n\right]\leqslant\bE\left[\sup_{0\leqslant t\leqslant\tau}H(G(Y_t^n))\right]\leqslant \bE\left[\widetilde{H}\left(\tau\right)\right].
\end{align}
We proceed to estimate $\widetilde H(y)$. A change of variable yields
\begin{equation}
\label{eq:Peskir:tilde H estimate}
\widetilde{H}(y) \leqslant \left(\sup_{x\geqslant y^n_0}\frac{F(x)}{x}\int_{x}^{\infty}\frac{dz}{F(z)}+2\right) H(y), \quad y \in [0, \infty).
\end{equation}
In order to estimate both factors on the right-hand side of \eqref{eq:Peskir:tilde H estimate}, we need to establish lower and upper bounds for the function $F$.

First, we establish an upper bound for $F$. It follows from \eqref{eq:Peskir:F} and the assumption $\gamma \theta <1$ that for $x\in [y^n_0, \infty)$, 
\begin{align}
F(x) &= \frac{2}{\sigma^2}\int_{y_0}^{x^{\frac{1}{n}}} v^{-\gamma \theta} {\rm e}^{\gamma v}\int_{y_0}^{v}y^{\gamma \theta -1}{\rm e}^{-\gamma y} dy\ dv \leqslant  \frac{2}{\sigma^2} y_0^{-\gamma \theta}  y_0^{\gamma \theta-1} \int_{y_0}^{x^{\frac{1}{n}}} {\rm e}^{\gamma v}\int_{y_0}^{v}{\rm e}^{-\gamma y} dy\ dv  \notag \\
&\leqslant \frac{2}{\gamma \sigma^2 y_0}\int_{y_0}^{x^{\frac{1}{n}}}{\rm e}^{\gamma v}\left({\rm e}^{-\gamma y_0}-{\rm e}^{-\gamma v}\right)dv
\leqslant \frac{2 {\rm e}^{-\gamma y_0}}{\gamma \sigma^2 y_0}\int_{y_0}^{x^{\frac{1}{n}}}{\rm e}^{\gamma v}dv \leqslant  \frac{2 {\rm e}^{-\gamma y_0}}{\gamma^2 \sigma^2 y_0}\left({\rm e}^{\gamma x^{\frac{1}{n}}}-{\rm e}^{\gamma y_0}\right) \notag \\
&\leqslant \frac{2 {\rm e}^{-\gamma y_0}}{\gamma^2 \sigma^2 y_0}{\rm e}^{\gamma x^{\frac{1}{n}}}. \label{eq.sup.dom.ineq1}
\end{align}
Next, we establish a lower bound for $F$. Set
\begin{equation*}
\bar{x} := 2^n y_0^n + 1 + 8^n \gamma^{-2n} \geqslant 2^n y_0^n + \left(\frac{4 \log(2)}{\gamma} \vee \frac{8}{\gamma^2}\right)^n \geqslant \left(y_0 + \frac{2 \log(2)}{\gamma} \right)^n\vee \frac{8^n}{\gamma^{2n}},
\end{equation*}
where the first inequality follows from the fact that $\frac{8}{\gamma^2} > \frac{4 \log(2)}{\gamma}$ for $\gamma < 2/\log(2)$ and $\frac{4 \log(2)}{\gamma} \leq 2 \log(2)^2 < 1$ for $\gamma \geq 2/\log(2)$, and the second inequality follows from the elementary inequality $(a + b)^n \leqslant 2^n (a^n+ b^n)$ for $a, b,n  \geqslant 0$.

Then using $\gamma \theta <1$ together with the elementary inequalities of Lemma~\ref{lem:exp inequality}, we obtain for $x\geqslant \bar{x}$:
\begin{align}
F(x) &= \frac{2}{\sigma^2}\int_{y_0}^{x^{\frac{1}{n}}} v^{-\gamma \theta} {\rm e}^{\gamma v}\int_{y_0}^{v}y^{\gamma \theta -1}{\rm e}^{-\gamma y} dy\ dv \geqslant \frac{2}{\sigma^2}\int_{y_0}^{x^{\frac{1}{n}}} v^{-\gamma \theta} v^{\gamma \theta -1}{\rm e}^{\gamma v}\int_{y_0}^{v}{\rm e}^{-\gamma y} dy\ dv  \notag\\
&\geqslant \frac{2}{\gamma \sigma^2}\int_{y_0}^{x^{\frac{1}{n}}}v^{-1}{\rm e}^{\gamma v}\left({\rm e}^{-\gamma y_0}-{\rm e}^{-\gamma v}\right)dv 
\geqslant \frac{ {\rm e}^{-\gamma y_0}}{\gamma \sigma^2 } \int_{y_0 + \frac{\log(2)}{\gamma}}^{x^{\frac{1}{n}}}v^{-1}{\rm e}^{\gamma v}dv \notag \\
&\geqslant \frac{{\rm e}^{-\gamma y_0}}{\gamma \sigma^2 } x^{-\frac{1}{n}} \int_{y_0 + \frac{\log(2)}{\gamma}}^{x^{\frac{1}{n}}}{\rm e}^{\gamma v}dv
=\frac{{\rm e}^{-\gamma y_0}}{\gamma^2 \sigma^2 } x^{-\frac{1}{n}} \left({\rm e}^{\gamma x^{\frac{1}{n}}}-{\rm e}^{\gamma (y_0 + \frac{\log(2)}{\gamma})}\right) \notag \\
&\geqslant \frac{{\rm e}^{-\gamma y_0}}{2 \gamma^2 \sigma^2 } x^{-\frac{1}{n}} {\rm e}^{\gamma x^{\frac{1}{n}}}\label{eq:lower.bound.F positive} \\
&\geqslant  \frac{{\rm e}^{-\gamma y_0}}{2 \gamma^2 \sigma^2 } {\rm e}^{\frac{\gamma}{2} x^{\frac{1}{n}}}. \label{eq:lower.bound.F quadratic}
\end{align}
In view of \eqref{eq:lower.bound.F quadratic} and since $F$ is increasing, it follows that 
\begin{align}
H(y) &\leqslant  \left( \frac{2 \log(y)}{\gamma} + 2y_0 + \frac{4}{\gamma} \log(2 \gamma \sigma)\right)^n \leqslant \left( \frac{2 \log(y)}{\gamma} + 2y_0 + 8 \sigma \right)^n \notag \\
&\leqslant 4^n \gamma^{-n} \log(y)^n + 8^ny^n_0 + 32^n \sigma^n, \quad \mbox{for $y \geqslant F(\bar x)$.}
\label{eq:Peskir:H estimate after bar x}
\end{align}
Here, we have used in the last two steps the elementary inequalities $\log(x) \leqslant x$ for $x \in (0, \infty)$ and $(a + b)^n \leqslant 2^n (a^n+ b^n)$ for $a, b, n \geqslant 0$.
Moreover, for $y \in [0, F(\bar x)]$, using again that $F$ is increasing, we have
\begin{equation}
\label{eq:Peskir:H estimate before bar x}
H(y) \leqslant \bar x.
\end{equation}
Combining \eqref{eq:Peskir:H estimate after bar x} and \eqref{eq:Peskir:H estimate before bar x}, we obtain for all $y \in [0, \infty)$,
\begin{equation}
\label{eq:Peskir:H estimate}
H(y) \leqslant 4^n \gamma^{-n} \log(y\vee 1)^n + 1+ 8^n \big(y_0^n+ \gamma^{-2n} + 4^n \sigma^n\big).
\end{equation}

Finally, we derive an upper bound for  $\sup_{x\geqslant 1}\frac{F(x)}{x}\int_{x}^{\infty}\frac{dz}{F(z)} + 2$.
First, by \eqref{eq:lower.bound.F positive}, a change of variables, and Lemma \ref{lem:incomplete Gamma} (noting that $(\bar x)^{\frac{1}{n}} \gamma \geqslant 1$), we obtain for $x\geqslant \bar{x}$ that	
\begin{align}
\int_{x}^{\infty}\frac{dz}{F(z)} &\leqslant 2 \gamma^2 \sigma^2  {\rm e}^{\gamma y_0} \int_{x}^{\infty}z^{\frac{1}{n}}{\rm e}^{-\gamma z^{\frac{1}{n}}}dz = 2 n \gamma^2 \sigma^2  {\rm e}^{\gamma y_0}\int_{x^{\frac{1}{n}}}^{\infty}w^{n}{\rm e}^{-\gamma w}dw\notag\\
&\leqslant 6 n (n!) \gamma \sigma^2  {\rm e}^{\gamma y_0}x {\rm e}^{-\gamma x^{\frac{1}{n}}} \leqslant 6 (n+1)! \gamma \sigma^2  {\rm e}^{\gamma y_0}x {\rm e}^{-\gamma x^{\frac{1}{n}}}. \label{eq.sup.dom.ineq2}
\end{align}
Putting together Equations \eqref{eq.sup.dom.ineq1} and \eqref{eq.sup.dom.ineq2}, we obtain for $x\geqslant \bar{x}$,
\begin{equation}
\label{eq:Peskir:F integral estimate after bar x}
	\frac{F(x)}{x}\int_{x}^{\infty}\frac{dz}{F(z)}\leqslant \frac{12}{y_0} \left((n+1)!\right) \gamma^{-1}.
\end{equation} 
Using that $F$ is increasing, and the estimate \eqref{eq:Peskir:F integral estimate after bar x}, we obtain for $x \in [y_0, \bar x]$, 
\begin{align}
\frac{F(x)}{x}\int_{x}^{\infty}\frac{dz}{F(z)} &\leqslant  \frac{1}{y_0} \left(\int_{x}^{\bar x}\frac{F(x)}{F(z)} dz + F(\bar x)\int_{\bar x}^{\infty}\frac{dz}{F(z)}\right) \notag \\
&\leqslant \frac{1}{y_0} \left( \int_{x}^{\bar x} dz + \bar x \frac{F(\bar x)}{\bar x}\int_{\bar x}^{\infty}\frac{dz}{F(z)} \right) \notag \\
& \leqslant \frac{1}{y_0} \left(\bar x  + \bar x \frac{12}{y_0} \left((n+1)!\right) \gamma^{-1}\right).
\label{eq:Peskir:F integral estimate before bar x}
\end{align}
Combining \eqref{eq:Peskir:F integral estimate before bar x} and \eqref{eq:Peskir:F integral estimate after bar x} gives
\begin{equation}
\label{eq:Peskir:F integral estimate} 
\sup_{x\geqslant 1}\frac{F(x)}{x}\int_{x}^{\infty}\frac{dz}{F(z)}+2 \leqslant 2 + \frac{\bar x}{y_0} \left(1 + \frac{12}{y_0} \left((n+1)!\right) \gamma^{-1} \right).
\end{equation}
Now, the result follows from \eqref{eq:Peskir.ineq}, \eqref{eq:Peskir:tilde H estimate},  \eqref{eq:Peskir:H estimate}, and \eqref{eq:Peskir:F integral estimate}.
\end{proof}

The following elementary estimates are used in the proof of Proposition~\ref{prop:CIR process}:

\begin{lemma}
\label{lem:exp inequality}
Let $\gamma > 0$ and $y \geqslant 0$. Then:
\begin{align}
\label{eq:lem:exp inequality:quadratic}
\frac{1}{x}\exp(\gamma x) &\geqslant \exp\left(\frac{\gamma}{2} x\right), \quad \text{for all } x \geqslant \frac{8}{\gamma^2}, \\
\label{eq:lem:exp inequality:positive}
\exp(\gamma x) - \exp( \gamma y) &\geqslant \frac{1}{2} \exp(\gamma x), \quad \text{for all } x \geqslant y + \frac{\log(2)}{\gamma}, \\
\label{eq:lem:exp inequality:negative}
\exp(-\gamma y)  - \exp(-\gamma x) &\geqslant \frac{1}{2} \exp(-\gamma y), \quad \text{for all } x \geqslant y + \frac{\log(2)}{\gamma}.
\end{align}
\end{lemma}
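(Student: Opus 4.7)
The three claims are elementary inequalities that I would establish by direct algebraic manipulation, so the ``plan'' amounts to identifying the cleanest reduction for each.

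For \eqref{eq:lem:exp inequality:positive} and \eqref{eq:lem:exp inequality:negative}, I would simply rearrange. The inequality $e^{\gamma x} - e^{\gamma y} \geqslant \tfrac{1}{2} e^{\gamma x}$ is equivalent to $e^{\gamma x} \geqslant 2 e^{\gamma y}$, i.e., to $\gamma(x-y) \geqslant \log 2$, which is exactly the hypothesis $x \geqslant y + \log(2)/\gamma$. Likewise, $e^{-\gamma y} - e^{-\gamma x} \geqslant \tfrac{1}{2} e^{-\gamma y}$ is equivalent to $e^{-\gamma x} \leqslant \tfrac{1}{2} e^{-\gamma y}$, i.e., to $e^{\gamma(x-y)} \geqslant 2$, which is again the same hypothesis. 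Both are one-line verifications.

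For \eqref{eq:lem:exp inequality:quadratic}, the inequality $\tfrac{1}{x} e^{\gamma x} \geqslant e^{\gamma x/2}$ is equivalent, after multiplying by $x$ and dividing by $e^{\gamma x/2}$, to $e^{\gamma x/2} \geqslant x$. The plan is to bound the exponential below by a single quadratic term from its Taylor series: from $e^{z} = \sum_{k \geqslant 0} z^{k}/k! \geqslant z^{2}/2$ for $z \geqslant 0$, applied at $z = \gamma x/2$, one gets
\begin{equation*}
e^{\gamma x/2} \;\geqslant\; \frac{(\gamma x/2)^{2}}{2} \;=\; \frac{\gamma^{2} x^{2}}{8}.
\end{equation*}
For $x \geqslant 8/\gamma^{2}$, the right-hand side equals $x \cdot (\gamma^{2} x/8) \geqslant x$, which finishes the argument.

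I do not anticipate any real obstacle: each of the three assertions reduces, after one algebraic manipulation, either to the defining hypothesis on $x$ or to the single Taylor-series bound $e^{z} \geqslant z^{2}/2$ for $z \geqslant 0$. The only small choice to make is how sharp a lower bound on $e^{\gamma x/2}$ to use in the first inequality; the quadratic term is the lowest-order term that beats $x$ at the threshold $x = 8/\gamma^{2}$, and produces the stated constant exactly.
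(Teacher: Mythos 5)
Your argument is correct on all three counts. For \eqref{eq:lem:exp inequality:positive} and \eqref{eq:lem:exp inequality:negative}, rearranging each inequality collapses it to the hypothesis $\gamma(x-y)\geqslant\log 2$. For \eqref{eq:lem:exp inequality:quadratic}, your reduction to $e^{\gamma x/2}\geqslant x$ followed by the Taylor truncation $e^{z}\geqslant z^{2}/2$ for $z\geqslant 0$ is clean, and the threshold $x\geqslant 8/\gamma^{2}$ is exactly where the resulting bound $\gamma^{2}x^{2}/8$ reaches $x$, so the constant is recovered precisely. The paper itself states this lemma without proof, presumably treating it as obvious, so there is no internal proof to compare against; your write-up simply supplies the omitted verification at the natural level of detail.
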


\begin{lemma}
\label{lem:incomplete Gamma}
Let $\gamma > 0$, $n \in \mathbb{N}$ and $y \geqslant \frac{1}{\gamma}$. Then:
\begin{equation*}
\int_{y}^\infty x^n \exp(-\gamma x) d x \leqslant 3 (n!) \gamma^{-1 } y^n \exp(-\gamma y).
\end{equation*}
\end{lemma}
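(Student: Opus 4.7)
The plan is to evaluate the integral explicitly via repeated integration by parts, then use the hypothesis $\gamma y \geq 1$ to bound the resulting finite sum by a tail of the series expansion of the exponential.

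Concretely, integrating by parts once gives
\begin{equation*}
\int_y^\infty x^n e^{-\gamma x}\, dx = \frac{y^n e^{-\gamma y}}{\gamma} + \frac{n}{\gamma}\int_y^\infty x^{n-1} e^{-\gamma x}\, dx,
\end{equation*}
and iterating $n$ times (or a straightforward induction on $n$) yields the closed form
\begin{equation*}
\int_y^\infty x^n e^{-\gamma x}\, dx = \sum_{k=0}^n \frac{n!}{(n-k)!\, \gamma^{k+1}}\, y^{n-k}\, e^{-\gamma y}.
\end{equation*}

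Next, I would divide both sides of the target inequality by $n!\,\gamma^{-1} y^n e^{-\gamma y}$, reducing the claim to
\begin{equation*}
\sum_{k=0}^n \frac{1}{(n-k)!\, (\gamma y)^k}\leqslant 3.
\end{equation*}
Relabelling $j := n-k$ rewrites this as $\sum_{j=0}^n \frac{1}{j!\,(\gamma y)^{n-j}}\leqslant 3$. The hypothesis $\gamma y \geqslant 1$ gives $(\gamma y)^{n-j}\geqslant 1$ for each $0\leqslant j \leqslant n$, hence
\begin{equation*}
\sum_{j=0}^n \frac{1}{j!\,(\gamma y)^{n-j}} \leqslant \sum_{j=0}^n \frac{1}{j!} \leqslant e \leqslant 3,
\end{equation*}
which completes the proof.

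There is really no obstacle here: the only care needed is in tracking the combinatorial factors in the integration by parts and in the choice to bound $(\gamma y)^{n-j}$ from below by $1$ rather than some sharper quantity (the latter would give the asymptotically tight constant $1$ but complicate the statement). The constant $3 > e$ is chosen simply to absorb $\sum_{j\geqslant 0} 1/j!$ with room to spare.
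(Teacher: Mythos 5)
Your proof is correct and is essentially the same argument as the paper's: the paper first changes variables to $w=\gamma y$ before integrating by parts, and then bounds $w^{n-k}\leqslant w^n$ term-by-term instead of normalising by $y^n$, but both routes reduce to the same finite sum and the same bound $\sum_{j=0}^n 1/j!\leqslant e\leqslant 3$.
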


\begin{proof}
Set $w := \gamma y \geqslant 1$. Then by a change of variables,
\begin{equation*}
\int_{y}^\infty x^n \exp(-\gamma x) d x =  \gamma^{-1 -n} \int_w^\infty z^n \exp(-z) dz.
\end{equation*}
Moreover, integration by parts (and induction) together with $w \geqslant 1$ give
\begin{align*}
\int_w^\infty z^n \exp(-z) dz &= \sum_{k =0}^n w^{n-k} \frac{n!}{(n-k)!} \exp(-w) \leqslant w^n \exp(-w) \sum_{k =0}^n \frac{n!}{(n-k)!} \\
&\leqslant 3 (n!)  w^n \exp(-w). \qedhere
\end{align*}
\end{proof}

Using Proposition~\ref{prop:CIR process}, we now establish a moment estimate for the supremum of $X^\epsi/\epsi$ that is used in Section~\ref{section.concatenation}.

\begin{lemma}
\label{lem.max.inequality}
Suppose that Assumption \ref{ass.novikov} is satisfied and there is $\kappa \in (0, 1)$ such that 
\begin{equation}
	\label{eq:lem.max.inequality:boundedness condition}
L_t, M_t \in \left[\kappa, \frac{1}{\kappa} \right], \quad t \in [0, T].
\end{equation}
Set 
\begin{equation}
	\label{eq:lem.max.inequality:z0}
z_0 := 2 \limsup_{\epsi \to 0} (x^\epsi_0)^2/\epsi^2 \vee 1.
\end{equation}
Then, for $n\in \bN$ and $\epsi > 0$:
\begin{align}
\bE\left[\max_{0\leqslant t \leqslant
T}\left(\frac{X^\epsi_t}{\epsi}\right)^{\!n}\right] &\leqslant \bE_{\bP}\left[\frac{d\bP}{d\bQ}\right]^{\frac{1}{2}}\bigg(\sqrt{C^1(z_0,a\kappa^2,2,n)} +\sqrt{C^2(z_0,a\kappa^2,n)} \times \notag \\
&\qquad\qquad \qquad  \times \bE\bigg[\left(\frac{d\bQ}{d\bP}\right)^2\bigg]^{\frac{1}{2}}\bE\bigg[\log\Big(\int_{0}^{T}\epsi^{-2}c_t dt\vee 1\Big)^{2n} \bigg]^{\frac{1}{4}}\bigg) \notag \\
\label{pre.uniform.moment} & =O\left(\log\left(\frac{1}{\epsi}\right)^{\frac{n}{2}}\right).
\end{align}
Here, the functions $C^1$ and $C^2$ are defined as in Proposition~\ref{prop:CIR process}, and $a$ is a positive constant such that  $xg(x)\geqslant a\left|x\right|^2-\frac{1}{4}$ for all $x\in\bR$.
\end{lemma}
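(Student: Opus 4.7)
The plan is to dominate the square of the time-changed, rescaled process $\widetilde X^\epsi_\xi = X^\epsi_{u^\epsi_\xi}/\epsi$ pathwise under the equivalent measure $\bQ$ of \eqref{eq:proba.change.drift.Delta} by a standard CIR process, apply the maximal inequality of Proposition~\ref{prop:CIR process} to that CIR, and then transfer the resulting bound back to $\bP$ via Cauchy--Schwarz. The two density factors $\bE_\bP[d\bP/d\bQ]^{1/2}$ and $\bE_\bP[(d\bQ/d\bP)^2]^{1/2}$ in \eqref{pre.uniform.moment} will emerge from two applications of Cauchy--Schwarz, while the shape $\sqrt{C^1}+\sqrt{C^2}(\cdots)$ will come from $\sqrt{a+b}\leq\sqrt a+\sqrt b$ applied to Proposition~\ref{prop:CIR process}.

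Concretely, I would first reuse the It\^o calculation already present in the proof of Proposition~\ref{proposition:SDE.proof.existence}: writing $B^{\epsi,\bQ}_\xi := \int_0^\xi \sign(\widetilde X^\epsi_y)\, d\widetilde W^{\epsi,\bQ}_y$ for the $\bQ$-Brownian motion produced by L\'evy's characterization, one obtains
\begin{equation*}
d(\widetilde X^\epsi_\xi)^2 = \bigl(1 - 2L_{u^\epsi_\xi}\widetilde X^\epsi_\xi\, g(M_{u^\epsi_\xi}\widetilde X^\epsi_\xi)\bigr)\,d\xi + 2\sqrt{(\widetilde X^\epsi_\xi)^2}\,dB^{\epsi,\bQ}_\xi.
\end{equation*}
Substituting $y = M_{u^\epsi_\xi}\widetilde X^\epsi_\xi$ into $yg(y)\geq ay^2 - 1/4$ and using $L_t, M_t\in[\kappa,1/\kappa]$ bounds the drift above by $\alpha - 2a\kappa^2(\widetilde X^\epsi_\xi)^2$ for some finite $\alpha = \alpha(\kappa)$. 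The comparison theorem in the form of Lemma~\ref{lem:app:comparison} then yields $(\widetilde X^\epsi_\xi)^2 \leq Y_\xi$ $\bQ$-a.s.\ on $\llbracket 0, \xi^\epsi\rrbracket$, where $Y$ is the CIR process with diffusion coefficient $2\sqrt Y$, mean-reversion speed $2a\kappa^2$ and constant drift $\alpha$ started at $z_0$; the choice $z_0 = 2\limsup (x^\epsi_0)^2/\epsi^2\vee 1$ is precisely what is needed to ensure $Y_0 \geq (\widetilde X^\epsi_0)^2$ for all sufficiently small $\epsi$ and $z_0\geq 1$ so that the logarithms below are well defined. Proposition~\ref{prop:CIR process} applied to $Y$ with stopping time $\tau = \xi^\epsi$ then gives
\begin{equation*}
\bE_\bQ\Bigl[\max_{0\leq \xi\leq\xi^\epsi}Y_\xi^n\Bigr] \leq C^1(z_0,a\kappa^2,2,n) + C^2(z_0,a\kappa^2,n)\,\bE_\bQ[\log(\xi^\epsi\vee 1)^n].
\end{equation*}

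To pass back to $\bP$, I would write $\bE_\bP[\max(X^\epsi/\epsi)^n] = \bE_\bQ[(d\bP/d\bQ)\max(X^\epsi/\epsi)^n]$, apply Cauchy--Schwarz (which produces $\bE_\bP[d\bP/d\bQ]^{1/2}$), use the pointwise bound $(X^\epsi_t/\epsi)^{2n}\leq Y_{\xi_t}^n$, split via $\sqrt{a+b}\leq\sqrt a+\sqrt b$, and apply Cauchy--Schwarz once more to convert $\bE_\bQ[\log^n]$ into a product of $\bE_\bP[(d\bQ/d\bP)^2]^{1/2}$ and $\bE_\bP[\log^{2n}]^{1/4}$. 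The asymptotic rate $O(\log(1/\epsi)^{n/2})$ then follows from $\xi^\epsi$ scaling as $\epsi^{-2}$, so that $\log(\xi^\epsi\vee 1)$ is of order $\log(1/\epsi)$. The main technical obstacle is verifying the Feller-type hypothesis $2\nu\theta/\sigma^2 < 1$ in Proposition~\ref{prop:CIR process}, which here reads $\alpha < 2$: the naive bound $\alpha \leq 1 + 1/(2\kappa^2)$ only delivers this for $\kappa > 1/\sqrt 2$, and for smaller $\kappa$ one exploits the non-negativity of $yg(y)$ together with the superlinear growth~\eqref{eq:growth.g} to replace the constant $1/4$ in $yg(y)\geq ay^2 - 1/4$ by any positive constant strictly less than $\kappa^2/2$, at the cost of a correspondingly smaller $a > 0$.
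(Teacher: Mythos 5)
Your proposal follows essentially the same route as the paper: pass to $(\widetilde X^\epsi)^2$ under $\bQ$ via the sign-adjusted Brownian motion, dominate it by a CIR process via Lemma~\ref{lem:app:comparison}, apply the maximal inequality of Proposition~\ref{prop:CIR process}, and transfer back to $\bP$ with two applications of Cauchy--Schwarz plus $\sqrt{b+c}\leq\sqrt b+\sqrt c$. This is correct in outline and matches the paper's structure exactly.

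The one substantive difference is in how the Feller-type hypothesis $2\nu\theta/\sigma^2<1$ is verified. You identify the obstacle correctly and propose shrinking the additive constant in $yg(y)\geq ay^2-1/4$ to something below $\kappa^2/2$, at the cost of replacing $a$ by a smaller, $\kappa$-dependent constant. That works, but it proves a slightly different statement than the lemma as written, where $a$ is a fixed constant independent of $\kappa$ satisfying $xg(x)\geq a|x|^2-1/4$. The paper instead writes
$$-2L\,x\,g(Mx)\;\leqslant\;-2\,\frac{L\wedge M}{M}\,\bigl(Mx\bigr)g\bigl(Mx\bigr)\;\leqslant\;-2\,\frac{L\wedge M}{M}\Bigl(aM^2x^2-\tfrac14\Bigr),$$
and then exploits $\frac{L\wedge M}{M}\leqslant 1$ on the constant term while $(L\wedge M)M\geqslant\kappa^2$ on the mean-reversion term, giving the drift bound $\tfrac32-2a\kappa^2 x^2$. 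This yields $2\nu\theta/\sigma^2=\tfrac34<1$ for every $\kappa\in(0,1)$ and every admissible $a$, with no need to tune $a$. If you replace your paragraph on the Feller condition with this factorization, your proof matches the paper's. Also, a minor point: the $\vee 1$ in \eqref{eq:lem.max.inequality:z0} simply ensures $z_0>0$ as required by Proposition~\ref{prop:CIR process} (which divides by $y_0$ in the constants $C^1,C^2$); it is not about making logarithms well-defined.
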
 

\begin{proof}
Using Proposition~\ref{prop:CIR process}, we show below that
\begin{equation}
\label{eq:pre.estimate.barX.Q}
\bE_{\bQ}\left[\max_{0\leqslant t \leqslant T}\left(\frac{X^\epsi_t}{\epsi}\right)^{\!2n}\right] \leqslant  C^1(z_0,a\kappa^2,2,n) + C^2(z_0,a\kappa^2,n) \bE_{\bQ}\left[\log(\xi^\epsi\vee 1)^n \right],
\end{equation}
where $\xi^\epsi := \int_{0}^{T}\epsi^{-2}c_t dt$ is as in \eqref{eq:xi.epsi.m}.
Then by the Cauchy--Schwarz inequality, the elementary inequality $\sqrt{b+c} \leqslant \sqrt{b} + \sqrt{c}$ for $ b, c \geqslant 0$, and again the Cauchy--Schwarz inequality, we obtain
\begin{align*}
&\bE\left[\max_{0\leqslant t \leqslant T}\left(\frac{X^\epsi_t}{\epsi}\right)^{n}\right] \leqslant \bE_{\bQ}\bigg[\bigg(\frac{d\bP}{d\bQ}\bigg)^2\bigg]^{\frac{1}{2}} \bE_{\bQ}\left[\max_{0\leqslant t \leqslant T}\left(\frac{X^\epsi_t}{\epsi}\right)^{\!2n}\right]^{\frac{1}{2}}\\
&\qquad\leqslant\bE\bigg[\frac{d\bP}{d\bQ}\bigg]^{\frac{1}{2}} \left(\sqrt{C^1(z_0,a\kappa^2,2,n)}+\sqrt{C^2(z_0,a\kappa^2,n)}\bE_{\bQ}\left[\log(\xi^\epsi\vee 1)^n \right]^{\frac{1}{2}}\right) \\
&\qquad\leqslant\bE\bigg[\frac{d\bP}{d\bQ}\bigg]^{\frac{1}{2}} \left(\sqrt{C^1(z_0,a\kappa^2,2,n)}+\sqrt{C^2(z_0,a\kappa^2,n)}\bE\bigg[\left(\frac{d\bQ}{d\bP}\right)^2\bigg]^{\frac{1}{2}} \bE\left[\log(\xi^\epsi\vee 1)^{2n} \right]^{\frac{1}{4}}\right) \\
&\qquad = O\left(\log\left(\frac{1}{\epsi}\right)^{\frac{n}{2}}\right).
\end{align*}
It remains to prove \eqref{eq:pre.estimate.barX.Q}. The definition of the rescaled and time-changed process \eqref{eq:relation.X} gives
\begin{equation}
\label{pre.maximum.rescaled}
\bE_{\bQ}\left[\max_{0\leqslant t \leqslant T}\left(\frac{X^\epsi_t}{\epsi}\right)^{\!2n}\right] =\bE_{\bQ}\left[\max_{0\leqslant \xi\leqslant \xi^\epsi}\left(\widetilde{X}^\epsi_\xi\right)^{\!2n}\right].
\end{equation}
Under $\bQ$, $(\widetilde{X}^{\epsi}_\xi)^2$ satisfies the SDE
\begin{equation*}
d\big(\widetilde{X}^{\epsi}_\xi\big)^2 =  \left(1-2L_{u^\epsi_\xi}\widetilde{X}^{\epsi}_\xi g\left(M_{u^\epsi_\xi}\widetilde{X}^{\epsi}_\xi\right)\right)\one_{\left\{\xi\leqslant \xi^\epsi\right\}}d\xi+2 \sqrt{\big(\widetilde{X}^{\epsi}_\xi\big)^2} \one_{\left\{\xi\leqslant \xi^\epsi\right\}}d\widetilde{B}^{\epsi,\bQ}_\xi,
\end{equation*}
where
\begin{equation*}
\widetilde{B}^{\epsi,\bQ}_\xi=\int_{0}^{\xi}\text{sgn}\big(\widetilde{X}^{\epsi}_y\big)d\widetilde{W}^{\epsi,\bQ}_y
\end{equation*}
is a $\bQ$-Brownian motion stopped at $\xi^\epsi$. Furthermore, the growth condition of $g$ \eqref{eq:growth.g} implies that there exist a constant $a$ such that $xg(x)\geqslant a\left|x\right|^2-\tfrac{1}{4}$. In view of \eqref{eq:lem.max.inequality:boundedness condition}, 
\begin{align*}
1-2L_{u^\epsi_\xi}x g\left(M_{u^\epsi_\xi}x\right)&\leqslant 1-2\frac{L_{u^\epsi_\xi}\wedge M_{u^\epsi_\xi}}{M_{u^\epsi_\xi}}M_{u^\epsi_\xi}xg\left(M_{u^\epsi_\xi} x\right) \leqslant 1-2\frac{L_{u^\epsi_\xi}\wedge M_{u^\epsi_\xi}}{M_{u^\epsi_\xi}}\left(aM^2_{u^\epsi_\xi}\left|x\right|^2-\frac{1}{4}\right)\\
&\leqslant \frac{3}{2}-2a\kappa^2 \left|x\right|^2,\quad\mbox{a.s., for $x\in\bR$.}
\end{align*}
Using the comparison result established in Lemma~\ref{lem:app:comparison}, we have $\bQ\left[\left(\widetilde{X}^\epsi_\xi\right)^2\leqslant Z_\xi, \mbox{for all }0\leqslant\xi\leqslant \xi^\epsi\right]$, where $Z$ is the solution of the SDE
\begin{equation*}
d Z_t = \left(\frac{3}{2} -  2a\kappa^2 Z_t \right) dt + 2 \sqrt{Z_t} d\widetilde{B}^{\epsi,\bQ}_t, \quad Z_0 = z_0,
\end{equation*}
where $z_0$ is defined in \eqref{eq:lem.max.inequality:z0}.
This is a special case of the equation~\eqref{eq:prop:CIR process:CIR SDE} studied in Proposition \ref{prop:CIR process} with $\nu=2a\kappa^2$, $\theta=\frac{3}{4a\kappa^2}$, $\sigma=2$ and $\gamma=a\kappa^2$. Combining \eqref{pre.maximum.rescaled} and \eqref{pre:CIR process}, we finally obtain the asserted estimate:
\begin{align*}
\bE_{\bQ}\left[\max_{0\leqslant t \leqslant T}\left(\frac{X^\epsi_t}{\epsi}\right)^{\!2n}\right] &\leqslant\bE_{\bQ}\left[\max_{0\leqslant \xi \leqslant \xi^\epsi} Z_\xi^{n}\right] \leqslant C^1(z_0,a\kappa^2,2,n)+C^2(z_0,a\kappa^2,n)\bE_{\bQ}\left[\log\left(\xi^\epsi \vee 1\right)^{n}\right].
\end{align*}
This completes the proof.
\end{proof}

\section{A Comparison Result for SDEs}
\label{app:comparison}

In this appendix, we establish a comparison result for one-dimensional SDEs that is used at various points in the proofs of our main results. It extends the standard argument from \cite[Proposition~5.2.18]{karatzas.shreve.91} to the case of random initial conditions as well as drift and diffusion coefficients that are not globally Lipschitz:

\begin{lemma}
	\label{lem:app:comparison}
	Let $(\Omega, \cF, \bF = (\cF_t)_{t \geqslant 0}, \bP)$ be a filtered probability space satisfying the usual conditions. Let $\tau$ be an $\bF$-stopping time taking values in $[0, \infty]$, $(c_t)_{t \geqslant 0}$ an $\bR^d$-valued, locally bounded and $\bF$-adapted process,  $(W_t)_{t \geqslant 0}$ a continuous $\bF$-adapted process that is a standard Brownian motion on $\llbracket 0,  \tau \rrbracket$, and $\Omega_0 \in \cF_0$. Suppose $(Y^{(1)}_t)_{t \geqslant 0}$ and $(Y^{(2)}_t)_{t \geqslant 0}$ are continuous $\bF$-adapted processes that satisfy the SDEs
	\begin{equation}
	\label{eq:lem:app:comparison}
	d Y^{(i)}_t = b^{(i)}(\omega, t, Y^{(i)}_t) \one_{\{t \leqslant \tau\}}dt + h\big(c_t,Y^{(i)}_t\big) \one_{\{t \leqslant \tau\}} d W_t, \quad Y^{(i)}_0 = y^{(i)}_0,
	\end{equation}
	where $y^{(i)}_0 \geqslant 0$ is $\cF_0$-measurable and $b^{(i)}$ is $\cF$-predictable for $i=1,2$, and $h$ is $1/2$-H\"older continuous in its second variable:
	\begin{equation*}
		\big|h\big(c,x\big)-h\big(c,y\big)\big|	\leqslant K(c)\sqrt{|x-y|},
	\end{equation*}
	where $K$ is a locally bounded function from $\bR^d$ to $\bR_+$. Set
	\begin{equation*}
	\widetilde \Omega^\tau_0 := \{(\omega,t) \in \Omega \times [0, \infty): \omega \in \Omega_0, t \in [0, \tau(\omega)]\}
	\end{equation*}
	and  assume that:
	\begin{enumerate}
		\item $b^{(1)}(\omega,t,y) \leqslant b^{(2)}(\omega,t,y)$ \text{ for all } $(\omega, t) \in \widetilde \Omega_0^\tau$ and $y \in \bR_+$;
		\item $y^{(1)}_0(\omega) \leqslant y^{(2)}_0(\omega)$ for all $\omega \in \Omega_0$;
		\item Either $b^{(1)}$ or $b^{(2)}$ is locally one-sided Lipschitz in $x$, uniformly in $(\omega,t) \in \widetilde \Omega^\tau_0$, i.e.
		\begin{equation*}
			\forall x_1\in \bR, \exists r_{x_1}: \forall x_2\in[x_1-r_{x_1},x_1],~~ b^{(i)}(\omega,t,x_1)-b^{(i)}(\omega,t,x_2) \leqslant K_{x_1}(x_1-x_2),
	\end{equation*}
	for some $K_{x_1}>0$ and all $(t,\omega)\in\Omega^\tau_0$.
	\end{enumerate}
	Then 
	\begin{equation}
		\label{eq:lem:app:comparison:result}
	\bP\left[\one_{\Omega_0} Y^{(1)}_t\leqslant \one_{\Omega_0} Y^{(2)}_t , \text{ for all } t \geqslant 0 \right] = 1.
	\end{equation}
\end{lemma}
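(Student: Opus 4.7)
The strategy is the Yamada--Watanabe smoothing argument of \cite[Proposition~5.2.18]{karatzas.shreve.91}, adapted to the $\cF_0$-measurable restriction $\Omega_0$ and to only locally bounded coefficients with purely one-sided local Lipschitz control. Writing $\Delta_t := Y^{(1)}_t - Y^{(2)}_t$, the aim is to show that $\bE\bigl[\one_{\Omega_0} \Delta_t^+\bigr] = 0$ for every $t \geqslant 0$; continuity of the paths of $Y^{(1)}, Y^{(2)}$ then upgrades this to the pathwise inequality \eqref{eq:lem:app:comparison:result}.

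I would first localize in two layers. Since $\Omega_0 \in \cF_0$, multiplying every identity by $\one_{\Omega_0}$ preserves adaptedness and the semimartingale decomposition. In time, I introduce
\begin{equation*}
\sigma_m := \inf\!\bigl\{t \geqslant 0 : |Y^{(1)}_t| \vee |Y^{(2)}_t| \vee K(c_t) \geqslant m\bigr\} \wedge m \wedge \tau,
\end{equation*}
which increases $\bP$-a.s.\ to $\tau$ by path continuity and local boundedness of $c$. On $\llbracket 0, \sigma_m \rrbracket$ the diffusion coefficient $h(c_s, Y^{(i)}_s)$ is uniformly bounded (via the $\tfrac12$-H\"older bound and $K(c_s) \leqslant m$), so all forthcoming stochastic integrals are true martingales. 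Next, I upgrade the local one-sided Lipschitz condition (iii) to a uniform bound on the compact $[-m,m]$: combining the positive radii $r_{x_1}$ with a chaining argument (partitioning any interval $[x_2, x_1] \subseteq [-m,m]$ into finitely many sub-intervals shorter than a common threshold extracted by compactness) produces a constant $\bar K_m$ such that $b^{(1)}(\omega, s, x_1) - b^{(1)}(\omega, s, x_2) \leqslant \bar K_m (x_1 - x_2)$ for all $-m \leqslant x_2 \leqslant x_1 \leqslant m$ and $(\omega, s) \in \widetilde{\Omega}^\tau_0$. If instead hypothesis (iii) holds for $b^{(2)}$, the symmetric decomposition $b^{(1)}(Y^{(1)}) - b^{(2)}(Y^{(2)}) = [b^{(1)}(Y^{(1)}) - b^{(2)}(Y^{(1)})] + [b^{(2)}(Y^{(1)}) - b^{(2)}(Y^{(2)})]$ performs the same role.

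Then I would introduce the standard Yamada--Watanabe approximants: fix $a_0 = 1$, $a_n = \exp(-n(n+1)/2)$ and continuous $\rho_n \geqslant 0$ supported in $(a_n, a_{n-1})$ with $\int \rho_n = 1$ and $\rho_n(u) \leqslant \tfrac{2}{nu}$; set $\psi_n(x) := \int_0^{x^+}\!\!\int_0^y \rho_n(u)\,du\,dy$, so that $\psi_n \in C^2(\bR)$, $\psi_n(x) \uparrow x^+$, $0 \leqslant \psi_n'(x) \leqslant \one_{\{x>0\}}$, and $\psi_n''(x) = \rho_n(x) \one_{\{x>0\}}$. Apply It\^o's formula to $\psi_n(\Delta_{t \wedge \sigma_m})$, use $\Delta_0 \leqslant 0$ on $\Omega_0$ (so the initial term vanishes), take $\bP$-expectation against $\one_{\Omega_0}$, and split the drift integrand as
\begin{equation*}
\psi_n'(\Delta_s)\bigl\{[b^{(1)}(s, Y^{(1)}_s) - b^{(1)}(s, Y^{(2)}_s)] + [b^{(1)}(s, Y^{(2)}_s) - b^{(2)}(s, Y^{(2)}_s)]\bigr\}.
\end{equation*}
The first bracket is bounded by $\bar K_m \Delta_s^+$ by the uniform one-sided Lipschitz estimate; the second is $\leqslant 0$ on $\{Y^{(2)}_s \geqslant 0\}$ by hypothesis~(i). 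The It\^o correction is bounded on $\llbracket 0, \sigma_m \rrbracket$ by $\tfrac12 \psi_n''(\Delta_s) K(c_s)^2 \Delta_s \leqslant m^2/n$ thanks to the $\tfrac12$-H\"older estimate $(h(c,x)-h(c,y))^2 \leqslant K(c)^2 |x - y|$ and $\rho_n(u) u \leqslant 2/n$. Sending $n \to \infty$ by bounded convergence (recall $\psi_n \uparrow x^+$ and $\Delta$ is bounded on $\llbracket 0, \sigma_m \rrbracket$) yields
\begin{equation*}
\bE\bigl[\one_{\Omega_0} \Delta_{t \wedge \sigma_m}^+\bigr] \leqslant \bar K_m \int_0^t \bE\bigl[\one_{\Omega_0} \Delta_{s \wedge \sigma_m}^+\bigr]\,ds,
\end{equation*}
to which Gronwall's lemma applies and returns $\bE[\one_{\Omega_0} \Delta_{t \wedge \sigma_m}^+] = 0$. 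Letting $m \to \infty$ and invoking path continuity deliver the claim.

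The main obstacle lies in the drift decomposition: hypothesis~(i) only gives $b^{(1)} \leqslant b^{(2)}$ on $\bR_+$, so the residual bracket $b^{(1)}(s, Y^{(2)}_s) - b^{(2)}(s, Y^{(2)}_s)$ has a controlled sign only when $Y^{(2)}_s \geqslant 0$. In every invocation of the lemma in Section~\ref{section.proof.ex}, equation \eqref{eq:comparison}, and Appendix~\ref{app:UI}, however, the dominating process $Y^{(2)}$ is a squared Bessel-type process or an auxiliary SDE of the form \eqref{sde_y.epsilon.-}, both of which are non-negative by construction; hence the required sign property holds on $\widetilde{\Omega}^\tau_0$ and the argument closes.
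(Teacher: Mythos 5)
Your proof takes essentially the same route as the paper: Yamada--Watanabe smoothing applied to $\Delta := Y^{(1)}-Y^{(2)}$, the drift decomposition $b^{(1)}(Y^{(1)})-b^{(2)}(Y^{(2)}) = [b^{(1)}(Y^{(1)})-b^{(1)}(Y^{(2)})] + [b^{(1)}(Y^{(2)})-b^{(2)}(Y^{(2)})]$, localization both to make the stochastic integrals true martingales and to uniformize the one-sided Lipschitz constant, and Gronwall's inequality. You are more explicit than the paper about the two localization steps (the stopping times $\sigma_m$ and the chaining to obtain a compact-uniform constant $\bar K_m$, where the paper invokes ``a standard localization argument''); both are sound. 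Your closing remark identifies a subtlety that the paper's own proof leaves implicit: hypothesis~(i) controls $b^{(1)}-b^{(2)}$ only on $\bR_+$, so the sign $b^{(1)}(\omega,s,Y^{(2)}_s)-b^{(2)}(\omega,s,Y^{(2)}_s)\leqslant 0$ is guaranteed only on $\{Y^{(2)}_s\geqslant 0\}$. You correctly observe that in every invocation in the paper the dominating process is nonnegative by construction (a squared Bessel or squared diffusion), so the argument closes for the intended applications; just note that resolving the point by appeal to downstream use does not prove the lemma at the stated generality --- a self-contained argument would add $Y^{(2)}\geqslant 0$ to the hypotheses (automatic when $h(c,0)=0$ and $b^{(2)}(\omega,t,0)\geqslant 0$), after which the rest of your proof goes through unchanged and agrees with the paper.
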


\begin{proof}
We may assume without loss of generality that $b^{(1)}$ satisfies (iii). By a standard localization argument, we may  assume that $b^{(1)}$ is globally one-sided Lipschitz in $x$, uniformly in $(\omega,t) \in \widetilde \Omega^\tau_0$, with Lipschitz constant $K > 0$. By a further localization argument, we may assume that $c$, the function $K$, $Y^{(1)}$ and $ Y^{(2)}$ are bounded by a constant $L > 0$.
	
By the construction in the proof of \cite[Proposition 5.2.13]{karatzas.shreve.91}, there exists a nondecreasing sequence $(\varphi_n)_{n \in \bN \setminus \{0\}}$ of nonnegative $C^2$ functions such that:
	\begin{enumerate}
		\item [(a)] for each $n$, $\varphi_n$ is supported on $[a_n, \infty)$ for some constant $a_n > 0$, and satisfies $0 \leqslant \varphi^\prime_n(x) \leqslant 1$ and  $0 \leqslant \varphi^{\prime\prime}_n(x) \leqslant \frac{2}{n x}$ for $x > 0$;
		\item [(b)] $\lim_{n \to \infty} \varphi_n(x) = x^+$ for $x \in \bR$.
	\end{enumerate}
	Fix $t > 0$ and $n \in \bN \setminus \{0\}$. Set $\Delta_t :=  Y^{(1)}_t - Y^{(2)}_t$. By It\^o's formula, the $1/2$-H\"older continuity of $h$, the fact that $0 \leqslant \varphi^{\prime\prime}_n(x) \leqslant \frac{2}{n x}$ for $x > 0$ by Property~(a), and the assumption that $K \leqslant L$, we obtain
	\begin{align}
	\varphi_n(\Delta_t) &= \varphi_n(\Delta_0) + \int_0^{t} \varphi_n^\prime(\Delta_s) \Big(b^{(1)}(\omega, s, Y^{(1)}_s) - b^{(2)}(\omega, s, Y^{(2)}_s) \Big) \one_{\{s \leqslant \tau\}} ds \notag \\
	&\qquad\qquad\;\;+ \frac{1}{2} \int_0^{t} \varphi_n^{\prime\prime}(\Delta_s) \left(h\big(c_s,Y^{(1)}_s\big)-h\big(c_s,Y^{(2)}_s\big)\right)^2\one_{\{s \leqslant \tau\}} ds \notag\\
	&\qquad\qquad\;\;+ \int_0^{t} \varphi_n^\prime(\Delta_s) \left(h\big(c_s,Y^{(1)}_s\big)-h\big(c_s,Y^{(2)}_s\big)\right) \one_{\{s \leqslant \tau\}}dW_s\notag \\
	&\leqslant  \varphi_n(\Delta_0) + \int_0^{t}  \varphi_n^\prime(\Delta_s) \Big(b^{(1)}\big(\omega, s, Y^{(1)}_s\big) - b^{(2)}\big(\omega, s, Y^{(2)}_s\big) \Big) \one_{\{s \leqslant \tau\}}ds + \frac{t L}{n}\notag\\
	&\qquad\qquad\;\;+ \int_0^{t \wedge \tau} \varphi_n^\prime(\Delta_s) \left(h\big(c_s,Y^{(1)}_s\big)-h\big(c_s,Y^{(2)}_s\big)\right) \one_{\{s \leqslant \tau\}} dW_s.
	\label{eq:pf:lem:app:comparison:Ito}
	\end{align}
	Now multiply the inequality \eqref{eq:pf:lem:app:comparison:Ito} with $\one_{\Omega_0}$, use that $\Delta_0 \leqslant 0$ on $\Omega_0$ by Assumption (ii), and note that $\int_0^{\cdot} \varphi_n^\prime(\Delta_s) h(c_s,Y^{(i)}_s) dW_s$ is a  martingale for $i \in \{1, 2\}$ since $K(c)$, $Y^{(1)}$ and $Y^{(2)}$ are bounded and $0 \leqslant \varphi^\prime_n(x) \leqslant 1$ for $x > 0$ by Assumption~(a). Also taking into account Assumption (i), that $\varphi'(x)=0$ on $\bR_-$ and that $b^{(1)}$ is one-sided Lipschitz in $x$, uniformly in $(\omega, t) \in \widetilde \Omega^\tau_0$, with constant $K > 0$, it follows that
	\begin{align*}
	\bE\left[\one_{\Omega_0} \varphi_n(\Delta_t)\right] \leqslant 0 &+ \bE\left[\int_0^t \one_{\{\Delta_s > 0\}} \varphi_n^\prime(\Delta_s) \one_{\Omega_0} \one_{\{s \leqslant \tau\}} \Big(b^{(1)}(\omega, s, Y^{(1)}_s) - b^{(1)}(\omega, s, Y^{(2)}_s) \Big) ds\right] \\
	&+ \bE\left[\int_0^t  \varphi_n^\prime(\Delta_s) \one_{\Omega_0} \one_{\{s \leqslant \tau\}} \Big(b^{(1)}(\omega, s, Y^{(2)}_s) - b^{(2)}(\omega, s, Y^{(2)}_s) \Big) ds\right] + \frac{t L}{n} +  0\\
	&\leqslant  K \bE\left[\int_0^t \one_{\Omega_0} \big|\Delta_s \big| \one_{\{\Delta_s > 0\}}ds \right] + 0 + \frac{t L}{n}.
	\end{align*}
	Letting $n \to \infty$, monotone convergence, Property (b) and Fubini's theorem give 
	\begin{equation*}
	\bE\left[\one_{\Omega_0} \big(\Delta_t \big)^+\right] \leqslant  K \int_0^t \bE\left[ \one_{\Omega_0} \big(\Delta_s\big)^+\right]ds.
	\end{equation*}
	Now apply Gronwall's inequality to the function 
	\begin{equation*}
	h(s) := \bE\left[\one_{\Omega_0} \big(\Delta_s\big)^+\right] \geqslant 0.
	\end{equation*}
	This yields $h(s) = 0$ for $s \in [0, t]$, and in turn $\one_{\Omega_0}  Y^{(1)}_s \leqslant \one_{\Omega_0}  Y^{(2)}_s$ $\bP$-a.s.\ for $s \in[0,t]$. The result now follows from the continuity of the paths of $Y^1$ and $Y^2$.
\end{proof}
\begin{remark}
Note that this rather general comparison result accommodates, in particular, ``bang-bang''-controlled SDEs of the form
\begin{equation*}
	dX^\epsi_t =-\frac{1}{\epsi}\text{sgn}(X^\epsi_t)dt+dW_t.
\end{equation*}
This is because our arguments only require the drift functional to be one-sided Lipschitz, as was kindly pointed out to as by one of the anonymous reviewers.
\end{remark}

\section{Auxiliary Results}
\label{app:auxiliary}

\subsection{Existence results for SDEs}
The following two strong existence results are somewhat nonstandard because the volatility functions are not locally Lipschitz at $0$ and, in the second result, the drift and volatility coefficient are not necessarily Markov. 

\begin{lemma}
	\label{lem:app:existence:SDE}
	Let $(\Omega, \cF, \bF = (\cF_t)_{t \geqslant 0}, \bP)$ be a filtered probability space satisfying the usual conditions, $y_0 \geqslant 0$ be an $\cF_0$-measurable random variable, and $(W_t)_{t \geqslant 0}$ an $\bF$-Brownian motion. Moreover, let $b: \bR \to \bR$ be locally Lipschitz, nonnegative on $\bR^+$, odd and null at zero. Then the SDE
	\begin{equation}
	\label{eq:lem:app:existence:SDE}
	d Y_t =  \left(1- 2 \sqrt{Y_t} b\left(\sqrt{Y_t}\right) \right) dt + 2\sqrt{Y_t} dW_t, \quad Y_0 = y_0,
	\end{equation}
	has a unique strong solution.
\end{lemma}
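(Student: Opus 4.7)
The strategy is to mimic the truncation argument used for Proposition~\ref{proposition:SDE.proof.existence}, with two modifications forced by the fact that the diffusion coefficient $2\sqrt{y}$ is only $1/2$-H\"older at $0$: we replace Protter's Lipschitz existence theorem by the Yamada--Watanabe--Ikeda--Watanabe machinery, and we must also verify that the constructed solutions are nonnegative so that $\sqrt{Y_t}$ is well defined. Accordingly, extend the coefficients to all of $\bR$ by replacing $Y_t$ by $Y_t^+$ inside the square roots, and for each $n \in \bN \setminus \{0\}$ set
\begin{equation*}
b^{(n)}(x) := \mathrm{sgn}(x) \bigl(|b(x)| \wedge n\bigr),
\end{equation*}
which is globally Lipschitz, bounded, odd and null at $0$. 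Consider the truncated SDE
\begin{equation}
\label{eq:appendix:truncated SDE}
dY^{(n)}_t = \Bigl(1 - 2\sqrt{(Y^{(n)}_t)^+}\, b^{(n)}\Bigl(\sqrt{(Y^{(n)}_t)^+}\Bigr)\Bigr) dt + 2\sqrt{(Y^{(n)}_t)^+}\, dW_t, \quad Y^{(n)}_0 = y_0.
\end{equation}
Since the drift is bounded Lipschitz and the diffusion coefficient is bounded and $1/2$-H\"older continuous, the Yamada--Watanabe theorem (see, e.g., \cite[Proposition 5.2.13]{karatzas.shreve.91}) yields pathwise uniqueness, and together with weak existence (which follows from the standard Skorokhod tightness argument applied to Euler approximations, whose coefficients are bounded continuous) gives strong existence of a unique solution $Y^{(n)}$.

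Next, I would establish that $Y^{(n)} \geqslant 0$ almost surely. On the set $\{Y^{(n)}_t < 0\}$ the dynamics of $Y^{(n)}$ reduce to $dY^{(n)}_t = dt$, while on $\{Y^{(n)}_t \geqslant 0\}$ the drift equals $1 - 2\sqrt{Y^{(n)}_t}\, b^{(n)}(\sqrt{Y^{(n)}_t}) \leqslant 1$. Applying Tanaka's formula to $(Y^{(n)}_t)^-$ gives
\begin{equation*}
d(Y^{(n)}_t)^- = -\one_{\{Y^{(n)}_t < 0\}}\, dt + \tfrac{1}{2}\, dL^0_t(Y^{(n)}),
\end{equation*}
where the diffusion part vanishes because $2\sqrt{(Y^{(n)}_t)^+}\one_{\{Y^{(n)}_t<0\}} \equiv 0$. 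Since $Y^{(n)}_0 = y_0 \geqslant 0$, taking expectations and using that $L^0(Y^{(n)})$ vanishes (the quadratic variation of $Y^{(n)}$ at points where $Y^{(n)}<0$ is zero) forces $\bE[(Y^{(n)}_t)^-] \leqslant 0$, whence $Y^{(n)}_t \geqslant 0$ a.s. for all $t$. Consequently $\sqrt{(Y^{(n)}_t)^+} = \sqrt{Y^{(n)}_t}$, and \eqref{eq:appendix:truncated SDE} coincides with \eqref{eq:lem:app:existence:SDE} up to the truncation of $b$.

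To pass to the limit, introduce the stopping time
\begin{equation*}
\sigma^n := \inf\bigl\{t \geqslant 0 : \sqrt{Y^{(n)}_t} \geqslant g_n\bigr\}, \qquad g_n := \inf\{x > 0 : |b(x)| > n\},
\end{equation*}
with the usual convention $\inf \emptyset = \infty$. On $\llbracket 0, \sigma^n \rrbracket$, one has $b^{(n)}(\sqrt{Y^{(n)}}) = b(\sqrt{Y^{(n)}})$, so $Y^{(n)}$ solves the original SDE \eqref{eq:lem:app:existence:SDE} up to $\sigma^n$, and pathwise uniqueness of the truncated equations (together with $b^{(n)} = b^{(m)}$ on $[0,g_{n \wedge m}]$) gives $Y^{(n)} = Y^{(m)}$ on $\llbracket 0, \sigma^n \wedge \sigma^m \rrbracket$. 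To show $\sigma^n \uparrow \infty$, compare $Y^{(n)}$ with the squared one-dimensional Bessel process $Z$ started at $y_0$, namely $dZ_t = dt + 2\sqrt{Z_t}\, dW_t$; since the drift of $Y^{(n)}$ is dominated by that of $Z$ and $b$ is nonnegative on $\bR_+$, the comparison lemma (Lemma~\ref{lem:app:comparison}, applied with $\tau = \infty$, $\Omega_0 = \Omega$ and drift coefficients that are one-sided Lipschitz on compacts) yields $Y^{(n)}_t \leqslant Z_t$ almost surely. As $Z$ does not explode and $g_n \to \infty$, it follows that $\sigma^n \uparrow \infty$ a.s. Defining $Y_t := Y^{(n)}_t$ on $\llbracket 0, \sigma^n \rrbracket$ therefore produces a global strong solution to \eqref{eq:lem:app:existence:SDE}, and pathwise uniqueness for the original SDE follows by another application of Yamada--Watanabe after localization via $\sigma^n$. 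The main technical point, and the place where a little care is required, is the nonnegativity argument for $Y^{(n)}$, since without it the drift term $\sqrt{Y_t}\, b(\sqrt{Y_t})$ is not meaningful and the comparison with the Bessel square process cannot be invoked.
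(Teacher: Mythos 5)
Your proof takes a genuinely different route from the paper's. The paper sidesteps the whole difficulty by an elegant transformation: it lets $Z$ be the unique strong solution of the much simpler SDE $dZ_t = -b(Z_t)\,dt + dW_t$, $Z_0 = \sqrt{y_0}$ (existence follows as in Proposition~\ref{proposition:SDE.proof.existence}, which handles the locally Lipschitz superlinear drift but has additive, non-degenerate noise), then sets $Y := Z^2$ and $B_t := \int_0^t \sign(Z_s)\,dW_s$. An application of It\^o's formula shows $Y$ solves \eqref{eq:lem:app:existence:SDE} driven by the Brownian motion $B$, giving weak existence with $Y\geqslant 0$ \emph{automatically}, since $Y$ is a square. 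Pathwise uniqueness then comes from Lemma~\ref{lem:app:comparison} applied with equal drifts (noting $x\mapsto \sqrt{x}b(\sqrt{x})$ is locally Lipschitz on $\bR_+$), and \cite[Corollary 5.3.23]{karatzas.shreve.91} upgrades this to strong existence. Your truncation--Yamada--Watanabe--localization scheme is valid in principle, but it has to confront head-on exactly the two issues the paper's transformation avoids: the degenerate $1/2$-H\"older diffusion coefficient and the nonnegativity of the solution.

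There is a genuine gap in your nonnegativity step. You assert that $L^0(Y^{(n)})$ vanishes because ``the quadratic variation of $Y^{(n)}$ at points where $Y^{(n)}<0$ is zero.'' That reasoning shows $L^a_t(Y^{(n)}) = 0$ for $a<0$, not for $a=0$; the local time at the level $0$ is precisely what the occupation-times formula does \emph{not} control via the behaviour on $\{Y^{(n)}<0\}$. The conclusion $L^0_t=0$ is in fact correct, but the right reason is the non-integrability of $\sigma(y)^{-2} = (4y^+)^{-1}$ near $y=0$: by the occupation-times formula, $\int_0^\epsilon a^{-1}L^a_t\,da = \int_0^t \mathbf{1}_{\{0<Y^{(n)}_s<\epsilon\}}(Y^{(n)}_s)^{-1}\,4(Y^{(n)}_s)^+\,ds \leqslant 4t < \infty$, and since $\int_0^\epsilon a^{-1}\,da = \infty$, right-continuity of $a\mapsto L^a_t$ forces $L^0_t=0$. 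Without this (or an equivalent Feller-boundary argument), your Tanaka inequality does not close, because the $+\tfrac12 L^0_t$ term has the wrong sign. A secondary, smaller point: as stated, the lemma does not impose a growth condition on $b$, so $\{|b|\leqslant n\}$ need not be compact and your $b^{(n)}$ is only \emph{locally} Lipschitz; this does not break the argument (Yamada--Watanabe needs only a local modulus plus boundedness), but the claim that $b^{(n)}$ is globally Lipschitz is not automatic. The paper's $Y=Z^2$ construction buys you freedom from both of these delicacies at once.
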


\begin{proof}
	By \cite[Corollary 5.3.23]{karatzas.shreve.91}, it suffices to show that  weak existence and pathwise uniqueness hold for the SDE \eqref{eq:lem:app:existence:SDE}.
	To establish weak existence, consider the SDE
	\begin{equation}
	d Z_t= - b(Z_t) dt + d W_t, \quad Z_0 = \sqrt{y_0}.
	\end{equation}
It follows by the same argument as in Proposition \ref{prop:sde} that $Z$ has a unique strong solution. Now set $Y := Z^2$ and define the Brownian motion $B$ by
\begin{equation*}
B_t = \int_0^t\text{sgn}(Z_t) d W_t.
\end{equation*}
Then $Y$ satisfies the SDE
\begin{equation*}
d Y_t = \left(1- 2 \sqrt{Y_t} b(\sqrt{Y_t}) \right) dt + 2\sqrt{Y_t} dB_t, \quad Y_0 = y_0,
\end{equation*}
and so \eqref{eq:lem:app:existence:SDE} has a weak solution.

Pathwise uniqueness follows from Lemma \ref{lem:app:comparison} with $\tau =+\infty$, $\Omega_0 = \Omega$ and $b^1(\omega, t, y) = b^2(\omega, t, y) = 1 - 2 \sqrt{y} b(\sqrt{y}) $ and $y^1_0 = y^2_0$. Note that $x\mapsto \sqrt{x}b(\sqrt{x})$ is locally Lipschitz under the assumptions on $b$.
\end{proof}

\begin{lemma}
	\label{lem:app:existence:SDE:2}
	Let $(\Omega, \cF, \bF = (\cF_t)_{t \in [0, T]}, \bP)$ be a filtered probability space satisfying the usual conditions, $y_0 \geqslant 0$ be an $\cF_0$-measurable random variable, and $(W_t)_{t \in [0, T]}$ an $\bF$-Brownian motion. Moreover, let $n \in \mathbb{N} \setminus \{0\}$ and $(c_t)_{t \in [0, T]}$ be a positive, continuous and $\bF$-adapted process satisfying 
	\begin{equation}
		\label{eq:lem:app:existence:SDE:n:mart}
\bE\left[\int_0^T c^{2n}_s ds\right] < \infty.
	\end{equation}
	Then the SDE
	\begin{equation}
	\label{eq:lem:app:existence:SDE:2}
	d Y_t =  c_t dt + 2\sqrt{Y_t} \sqrt{c_t} dW_t, \quad Y_0 = y_0,
	\end{equation}
	has a unique strong solution. Moreover,
		\begin{equation}
		\label{eq:lem:app:existence:SDE:n:int}		
\bE\left[\sup_{t \in [0, T]} (Y_t)^{2n} \right] < \infty.
	\end{equation}
\end{lemma}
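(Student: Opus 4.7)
The plan is to reduce \eqref{eq:lem:app:existence:SDE:2} to the squared $1$-dimensional Bessel SDE via a deterministic (given the path of $c$) time change, and to establish the moment bound by combining It\^o's formula with the Burkholder-Davis-Gundy inequality.

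For existence and uniqueness, I would first observe that $A_t := \int_0^t c_u\,du$ is absolutely continuous, strictly increasing on $[0,T]$, and finite by positivity and continuity of $c$. Denoting its inverse on $[0, A_T]$ by $\rho$, the process $B_s := \int_0^{\rho_s} \sqrt{c_u}\,dW_u$ is, by the Dambis-Dubins-Schwarz theorem applied to the time-changed filtration $\cG_s := \cF_{\rho_s}$, a Brownian motion on $[0, A_T]$ (which can be extended to $[0,\infty)$ by adjoining an independent Brownian motion if needed). Strong existence then follows by setting $Y_t := Z_{A_t}$, where $Z$ is the (unique strong) solution of the squared Bessel SDE of dimension $1$, $dZ_s = ds + 2\sqrt{Z_s}\,dB_s$ with $Z_0 = y_0$; see \cite[Chapter~XI]{revuz.yor.99}. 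The reverse time change on the stochastic integral verifies that $Y$ solves \eqref{eq:lem:app:existence:SDE:2}. For pathwise uniqueness, I would apply Lemma~\ref{lem:app:comparison} in both directions to any two strong solutions with matching initial condition: the drift $c_t$ is trivially one-sided Lipschitz in $y$ (being constant in $y$), and the diffusion $h(c, y) = 2\sqrt{y}\sqrt{c}$ is $\tfrac{1}{2}$-H\"older in $y$ with locally bounded coefficient $2\sqrt{c_t}$.

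For the moment bound, I would start from the integral representation, bound $Y_t^{2n} \leqslant 3^{2n-1}\bigl(y_0^{2n} + A_t^{2n} + 4^n\bigl|\int_0^t \sqrt{Y_s c_s}\,dW_s\bigr|^{2n}\bigr)$, localize with $\tau_m := \inf\{t \in [0,T] : Y_t \geqslant m\} \wedge T$, take the supremum over $s \leqslant t \wedge \tau_m$, and pass to expectations. BDG controls the martingale term by $C_n \bE\bigl[\bigl(\int_0^{t\wedge\tau_m} Y_s c_s\,ds\bigr)^n\bigr]$; the crude estimate $\int_0^{t\wedge\tau_m} Y_s c_s\,ds \leqslant \bigl(\sup_{s\leqslant t\wedge \tau_m} Y_s\bigr)\cdot A_T$ combined with Cauchy-Schwarz then produces a self-referential inequality of the form
$$\bE\Big[\sup_{s\leqslant t \wedge \tau_m} Y_s^{2n}\Big] \leqslant C_1 + C_2\,\bE\Big[\sup_{s\leqslant t \wedge \tau_m} Y_s^{2n}\Big]^{1/2},$$
with constants depending only on $y_0$, $n$, $T$, and $\bE[A_T^{2n}] \leqslant T^{2n-1}\bE\bigl[\int_0^T c_s^{2n}\,ds\bigr] < \infty$ (by Jensen), but independent of $m$. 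Solving this quadratic bound and applying monotone convergence as $m \to \infty$ will deliver \eqref{eq:lem:app:existence:SDE:n:int}.

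The hard part will be ensuring the self-referential bound is uniform in the localizing parameter $m$: naively taking expectations in the It\^o representation for $Y^{2n}$ produces drift contributions like $\bE[Y_s^{2n-1} c_s]$ that would force an induction on $n$ together with delicate H\"older estimates. Working with the integral form together with BDG and the crude factorization $Y_s c_s \leqslant \bigl(\sup Y\bigr)\cdot c_s$ circumvents this by collapsing everything into a single quadratic inequality in $\bE[\sup Y^{2n}]^{1/2}$ whose right-hand side depends only on the $L^{2n}$-norm of $c$.
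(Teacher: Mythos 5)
Your proof of existence and uniqueness via the time change $A_t = \int_0^t c_u\,du$, reduction to the squared one-dimensional Bessel SDE, and the comparison result (Lemma~\ref{lem:app:comparison}) for pathwise uniqueness is essentially the same route the paper takes, so that part matches.

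For the moment bound, however, you take a genuinely different and substantially more laborious path than the paper. You run the standard SDE moment machinery: localize with $\tau_m$, expand $Y^{2n}$ via the integral representation, apply BDG and Cauchy--Schwarz, and solve a self-referential quadratic inequality in $\bE[\sup Y^{2n}]^{1/2}$ that is uniform in $m$. This is correct and has the virtue of robustness---it would work for any SDE with a locally one-sided Lipschitz drift and a $\tfrac12$-H\"older diffusion, without exploiting any special structure. But the paper exploits the specific structure here much more efficiently: set $Z_t := \sqrt{y_0} + \int_0^t \sqrt{c_s}\,dW_s$, observe that $Y := Z^2$ is a weak solution of \eqref{eq:lem:app:existence:SDE:2} (after replacing $W$ by $\widetilde W_t := \int_0^t \sign(Z_s)\,dW_s$ via L\'evy's characterization), so by uniqueness in law the strong solution has the same law as $Z^2$. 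Then $\sup_t Y_t^{2n} = \sup_t Z_t^{4n}$, and a single application of BDG to the martingale $Z - \sqrt{y_0}$ together with Jensen on $\bE\bigl[(\int_0^T c_s\,ds)^{2n}\bigr] \leqslant T^{2n-1}\bE\bigl[\int_0^T c_s^{2n}\,ds\bigr]$ finishes the job---no localization, no self-referential bound. In short, the paper's approach buys brevity by recognizing that the solution is in law a squared shifted martingale; yours buys generality at the cost of several extra estimation steps. Note finally that both your argument and the paper's implicitly require $\bE[y_0^{2n}] < \infty$, which is not stated in the lemma's hypotheses but is automatic in the paper's only application (where $y_0$ is deterministic); your write-up should flag this explicitly rather than tacitly assume it.
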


\begin{proof}
Existence of a unique strong solution follows by a time change argument. Indeed, use the time change from Lemma \ref{lem:time.change} with $\epsi = 1$ and just write $u_\xi$ instead of $u_\xi^1$. Then writing $\widetilde W_\xi$ instead of $\widetilde W^1_\xi$ and setting $\widetilde Y_\xi := Y_{u_\xi}$, it suffices to show that the SDE
\begin{equation*}
dY_\xi = d\xi + 2 \sqrt{Y_\xi} d \widetilde W_\xi, \quad \widetilde Y_0 = y_0
\end{equation*}
has a unique strong solution. This is clear as this is the SDE satisfied by the square of a one-dimensional Bessel process started at $y_0$; cf.~\cite[Definition XI.1.1]{revuz.yor.99} and note that this result extends to non-trivial initial condition by virtue of \cite[Corollary 5.3.23]{karatzas.shreve.91}.

We proceed to derive \eqref{eq:lem:app:existence:SDE:n:int}. Define the process $Z$ by
	\begin{equation}
d Z_t=  \sqrt{c_t} d W_t, \quad Z_0 = \sqrt{y_0}.
\end{equation}
Then $Z$ is a martingale with finite $4n$-th moments by \eqref{eq:lem:app:existence:SDE:n:mart}. In particular by the Burkholder-Davis-Gundy inequality, $\bE\left[\sup_{t \in [0, T]} Z_t^{4n}\right] < \infty$.
Define the process $Y$ by $Y := Z^2$. Then $Y$ is a weak solution of the SDE \eqref{eq:lem:app:existence:SDE:2}. Now \eqref{eq:lem:app:existence:SDE:n:int} follows from the fact that $\sup_{t \in [0, T]} Y_t^{2n} = \sup_{t \in [0, T]} Z_t^{4n}$ and uniqueness in law of any strong or weak solution to \eqref{eq:lem:app:existence:SDE:2}.
\end{proof}

\subsection{An Ergodic Result} 

This section contains an ergodic theorem for the one-dimensional diffusions defined in (\ref{sde_y.epsilon.-}), which is used in the proof of Lemma~\ref{lem:ergodic elementary}. For constants $l,m > \delta>0$ and $y \geqslant 0$, consider the following two SDEs on some filtered probability space:
\begin{equation}\label{eq:SDEpm}
dY_t^{\pm} = \left(1-2(l \mp\delta)\sqrt{Y^\pm_t}g\left((m\mp \delta)\sqrt{Y^\pm_t}\right)\right)dt+2\sqrt{Y^\pm_t}dB_t, \quad Y_0=y,
\end{equation}
for a standard Brownian motion $(B_t)_{t \geqslant 0}$. (Existence and uniqueness follow from Lemma~\ref{lem:app:existence:SDE}.)

\begin{lemma} 
	\label{lemma.scale.speed.integrability}
	The diffusions $Y^{+}$, $Y^{-}$ are recurrent; their speed measures are finite and have the following densities:
	\begin{align*}
	\nu^{\pm}(y) &=\frac{1}{2}y^{-\frac{1}{2}}\exp\Big(-2\frac{l \mp \delta}{m\mp\delta}G\left((m \mp \delta)\sqrt{y}\right)+2\frac{l \mp \delta}{m\mp\delta}G(m \mp \delta)\Big) \one_{\{y \geqslant 0\}}.
	\end{align*}
\end{lemma}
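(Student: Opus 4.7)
The plan is to apply the standard formulas for the scale function and speed measure of a one-dimensional diffusion (cf.~\cite[Section~5.5]{karatzas.shreve.91}) to the SDEs~\eqref{eq:SDEpm}, which live on the state space $[0,\infty)$ with drift $b^\pm(y) = 1 - 2(l\mp\delta)\sqrt{y}\,g((m\mp\delta)\sqrt{y})$ and diffusion coefficient $\sigma^2(y)=4y$. The scale derivative is $(s^\pm)'(y) \propto \exp\bigl(-\int_{y_0}^y \frac{2b^\pm(z)}{\sigma^2(z)}\,dz\bigr)$ and the speed density is $\nu^\pm(y) = \frac{2}{\sigma^2(y)(s^\pm)'(y)}$, so everything reduces to evaluating a single integral.

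First I would split the exponent into two pieces: $\int_{y_0}^y \frac{dz}{2z}$, producing a factor $(y_0/y)^{1/2}$, and $\int_{y_0}^y \frac{(l\mp\delta)g((m\mp\delta)\sqrt{z})}{\sqrt{z}}\,dz$. For the second integral the substitution $u=(m\mp\delta)\sqrt{z}$ (so $du = \frac{m\mp\delta}{2\sqrt z}\,dz$) turns it into $\frac{2(l\mp\delta)}{m\mp\delta}\bigl(G((m\mp\delta)\sqrt{y}) - G((m\mp\delta)\sqrt{y_0})\bigr)$. Taking $y_0 = 1$, so that $(m\mp\delta)\sqrt{y_0} = m\mp\delta$, and inserting the result into $\nu^\pm(y) = \frac{1}{2y\,(s^\pm)'(y)}$ reproduces exactly the formula in the statement, including the normalizing constant $\exp\bigl(2\tfrac{l\mp\delta}{m\mp\delta}G(m\mp\delta)\bigr)$.

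Finiteness of the speed measure is then immediate from the superlinear growth assumption~\eqref{eq:growth.g}: since $g(x) \gtrsim x^q$ for large $|x|$ with $q\geqslant 1$, the antiderivative $G$ grows at least like $|x|^{q+1}\geqslant x^2$, so the factor $\exp\bigl(-\tfrac{2(l\mp\delta)}{m\mp\delta}G((m\mp\delta)\sqrt{y})\bigr)$ decays superpolynomially as $y\to\infty$ and dominates the $y^{-1/2}$ prefactor; near $0$, the singularity $y^{-1/2}$ is harmless since it is integrable.

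For the recurrence claim, the cleanest route is to exploit the representation $Y^\pm = (Z^\pm)^2$, where $Z^\pm$ solves $dZ^\pm = -(l\mp\delta)\,g((m\mp\delta)Z^\pm)\,dt + d\beta^\pm$ for a Brownian motion $\beta^\pm$. Indeed, expanding $d(Z^\pm)^2$ via It\^o's formula (using that $g$ is odd, so $z\,g((m\mp\delta)z) = \sqrt{z^2}\,g((m\mp\delta)\sqrt{z^2})$, and applying L\'evy's characterization to $\int \mathrm{sgn}(Z^\pm_s)\,d\beta^\pm_s$) reproduces~\eqref{eq:SDEpm}; this is exactly the trick used in the proof of Proposition~\ref{proposition:SDE.proof.existence}. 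The one-dimensional diffusion $Z^\pm$ on $\mathbb R$ is recurrent by superlinear odd mean reversion (Has'minskii-type criterion), so $Y^\pm = (Z^\pm)^2$ visits every neighborhood of every point of $[0,\infty)$ infinitely often. The main obstacle I anticipate is the boundary analysis at $y=0$: because $(s^\pm)'$ is integrable near $0$, the boundary $0$ is accessible in the scale sense, and one has to argue — either via the reflection picture inherited from $|Z^\pm|$ or via Feller's boundary classification applied directly to~\eqref{eq:SDEpm} — that $0$ is regular and instantaneously reflecting rather than absorbing, so that recurrence (and, combined with the finite speed measure, positive recurrence) genuinely holds.
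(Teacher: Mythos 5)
Your proof is correct and follows essentially the same route as the paper: computing the speed density from the standard scale/speed formulas via the substitution $u=(m\mp\delta)\sqrt z$, finiteness from the superlinear growth of $G$ and integrability of $y^{-1/2}$ near $0$, and recurrence via the representation $Y^\pm=(Z^\pm)^2$ with $Z^\pm$ a recurrent diffusion on $\mathbb{R}$ (the paper invokes \cite[Proposition 5.5.22(a)]{karatzas.shreve.91} for the latter). The boundary issue at $y=0$ that you flag as a potential obstacle is in fact already resolved by that very representation: since $Z^\pm$ is a nondegenerate recurrent diffusion on all of $\mathbb{R}$, the process $Y^\pm=(Z^\pm)^2$ automatically hits and instantly leaves $0$, so no separate Feller boundary classification is required.
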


\begin{proof}
	To prove that $Y^{+}$, $Y^{-}$ are recurrent, first note that this property only depends on the respective laws. Whence, it is enough to verify it for any weak solutions of the SDEs~\eqref{eq:SDEpm}. Such solutions are given by the squares of the solutions of the SDE~\eqref{eq:SDE.proof.existence} with constant coefficients $L=l\pm\delta$, $M=m\pm\delta$, and without stopping. To prove recurrence of $Y^{\pm}$ on $\mathbb{R}_+$ it is in turn sufficient to verify recurrence of these solutions on $\mathbb{R}$, which follows from \cite[Proposition 5.5.22(a)]{karatzas.shreve.91}.
	
	To compute the speed measures of $Y^{+}$, $Y^{-}$, first note that the respective scale functions (cf.~\cite[Equation (5.42)]{karatzas.shreve.91} are
	\begin{align*}
	p^\pm(y) &= \int_{1}^{y}\exp\left(-2\int_{1}^{x}\frac{1 -2(l \mp \delta)\sqrt{z}g\left((m \mp \delta)\sqrt{z}\right)}{4z}dz\right)dx\\
	&=\int_{1}^{y}x^{-\frac{1}{2}}\exp\left(2\frac{(l \mp \delta)}{(m \mp \delta)}G\left((m \mp \delta)\sqrt{x}\right)-2\frac{(l \mp \delta)}{(m \mp\delta)}G(m \mp\delta)\right)dx, \quad \mbox{for $y \geqslant 0$,}
	\end{align*}
	where $G(x)=\int_{0}^{x}g(y)dy$. The asserted formulas for the densities of the corresponding speed measures in turn follows directly from the definition \cite[Equation (5.51)]{karatzas.shreve.91}. Finiteness follows from an elementary integration near zero and the growth condition~\eqref{eq:growth.g} for the function $g$ near infinity. 
\end{proof}

Lemma~\ref{lemma.scale.speed.integrability}, the ergodic theorem as in \cite[Section II.35]{borodin.salminen.02}, the growth condition for the functions $f$ and $g$, and a change of variable in turn yield the following ergodic limits: 

\begin{lemma}
	\label{lemma.limit.epsi.ergodic}
	Suppose $l,m > \delta>0$. Then, for any $k \geqslant 0$:
	\begin{align*}
\lim_{x\to\infty}\frac{1}{x}\int_{0}^{x}f\left((k + \delta) \sqrt{Y^+_t}\right) dt &= \frac{\int_{\bR_+}f\left(\frac{(k + \delta)}{m-\delta}y\right)\exp\left(-2\frac{l-\delta}{m-\delta}G(y)\right)dy}{\int_{\bR_+}\exp\left(-2\frac{l-\delta}{m-\delta}G(y)\right)dy} \quad\mbox{a.s.} \\
\lim_{x\to\infty}\frac{1}{x}\int_{0}^{x}f\left(k (1 - \delta) \sqrt{Y^{-}_t}\right) dt &= \frac{\int_{\bR_+}f\left(\frac{k(1- \delta)}{m+\delta}y\right)\exp\left(-2\frac{l+\delta}{m+\delta}G(y)\right)dy}{\int_{\bR_+}\exp\left(-2\frac{l+\delta}{m+\delta}G(y)\right)dy} \quad\mbox{a.s.}
	\end{align*}
\end{lemma}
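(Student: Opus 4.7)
The plan is very direct: combine Lemma~\ref{lemma.scale.speed.integrability} with the classical ergodic theorem for recurrent one-dimensional diffusions with finite speed measure, and then perform an elementary change of variables to put each limit in the claimed form.

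Concretely, Lemma~\ref{lemma.scale.speed.integrability} already gives us two crucial facts about $Y^\pm$: they are recurrent on $\mathbb{R}_+$, and their speed measures are finite with explicit densities $\nu^\pm$. For any $\nu^\pm$-integrable $h : \mathbb{R}_+ \to \mathbb{R}$, the ergodic theorem (e.g.~\cite[Section II.35]{borodin.salminen.02}) therefore yields, almost surely,
\[
\lim_{x\to\infty}\frac{1}{x}\int_{0}^{x} h(Y^\pm_t)\,dt = \frac{\int_{\mathbb{R}_+} h(y)\,\nu^\pm(y)\,dy}{\int_{\mathbb{R}_+} \nu^\pm(y)\,dy}.
\]
I would apply this with $h(y) = f((k+\delta)\sqrt{y})$ for $Y^+$ and $h(y) = f(k(1-\delta)\sqrt{y})$ for $Y^-$. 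Integrability of these test functions against $\nu^\pm$ is routine: near $y=0$ the density has an integrable $y^{-1/2}$ singularity and $f$ is bounded (being continuous) on a neighbourhood of the origin, while near infinity the superlinear polynomial growth~\eqref{eq:growth.g} of $g$ gives $G(u)\gtrsim u^{q+1}$, so the exponential factor in $\nu^\pm$ decays faster than any polynomial and absorbs the polynomial growth of $f$ from~\eqref{eq:condition.f}.

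The final step is a change of variables $u = (m \mp \delta)\sqrt{y}$ in both the numerator and denominator of the ergodic average. Under this substitution $\sqrt{y} = u/(m\mp\delta)$ and $y^{-1/2}\,dy = \tfrac{2}{m\mp\delta}\,du$, so the density $\nu^\pm$ transforms as
\[
\tfrac{1}{2}y^{-1/2}\exp\!\big(-2\tfrac{l\mp\delta}{m\mp\delta}G((m\mp\delta)\sqrt{y})\big)\,dy \;=\; \tfrac{1}{m\mp\delta}\exp\!\big(-2\tfrac{l\mp\delta}{m\mp\delta}G(u)\big)\,du,
\]
while $f((k+\delta)\sqrt{y})$ becomes $f(\tfrac{k+\delta}{m-\delta}u)$ in the ``$+$'' case and analogously in the ``$-$'' case. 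The constant prefactor $\exp(2\tfrac{l\mp\delta}{m\mp\delta}G(m\mp\delta))$ from $\nu^\pm$ and the Jacobian factor $\tfrac{1}{m\mp\delta}$ both appear identically in numerator and denominator and therefore cancel, leaving precisely the expressions on the right-hand sides of the two claimed identities.

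There is no genuine obstacle in this argument; the only care required is in verifying the $\nu^\pm$-integrability of $h$ and in bookkeeping the substitutions, both of which are mechanical. The ``content'' of the lemma is really concentrated in Lemma~\ref{lemma.scale.speed.integrability}, which was the step where recurrence and finiteness of the speed measure were established via the scale function computation.
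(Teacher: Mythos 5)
Your proposal is correct and is precisely the argument the paper has in mind: it states Lemma~\ref{lemma.limit.epsi.ergodic} with only a one-line description (``Lemma~\ref{lemma.scale.speed.integrability}, the ergodic theorem as in Borodin--Salminen II.35, the growth condition for $f$ and $g$, and a change of variable''), and you have filled in exactly these steps, including the correct cancellation of the Jacobian and the constant $\exp(2\tfrac{l\mp\delta}{m\mp\delta}G(m\mp\delta))$ between numerator and denominator. One small correction: $f$ is not assumed continuous, only of finite variation on compacts; for the integrability near the origin you should invoke that $f$ is locally bounded (and Borel measurable), which is what the ergodic theorem actually requires.
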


\subsection{A Result from Measure Theory}
The following result from measure theory is used in the proof of Theorem \ref{theorem}:

\begin{lemma}
	\label{lemma.abstract}
	Let $(a^{\epsi,1}_t)_{t \in [0, T]}$ be a family of product-measurable processes indexed by $\epsi\in(0,\frac{1}{2})$ and $(a^2_t)_{t \in [0, T]}$ a product measurable-process. Suppose that, for ${\mathrm{Leb}}_{|[0,T]}$-a.e.~$t \in [0, T]$, $a^{\epsi,1}_t$ converges in probability to $a^2_t$ as $\epsi \to 0$.
	Moreover, assume that $|a^{\epsi,1}|^p$ is uniformly integrable (as a family indexed by $\epsi$) with respect to $\bP\otimes\mathrm{Leb}_{|[0,T]}$ and that $a^2 \in L^p (\bP\otimes\mathrm{Leb}_{|[0,T]})$ for some $p \geqslant 1$. Then $\int_{0}^{\cdot}a^{\epsi,1}_sds\to \int_{0}^{\cdot}a^2_sds$ in $\cS^p([0,T])$.
\end{lemma}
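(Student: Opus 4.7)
The plan is to bound the supremum of the indefinite integral by the total variation of the integrand and then invoke Vitali's convergence theorem on the product space $(\Omega \times [0,T], \bP \otimes \mathrm{Leb}_{|[0,T]})$.

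First I would observe the pathwise estimate
\begin{equation*}
\sup_{t \in [0,T]} \left|\int_0^t \big(a^{\epsi,1}_s - a^2_s\big)\,ds\right| \leqslant \int_0^T \big|a^{\epsi,1}_s - a^2_s\big|\,ds,
\end{equation*}
and then apply Jensen's (equivalently H\"older's) inequality in the form $\left(\tfrac{1}{T}\int_0^T|h_s|\,ds\right)^p \leqslant \tfrac{1}{T}\int_0^T |h_s|^p\,ds$ to obtain, after taking $\bP$-expectations and using Fubini,
\begin{equation*}
\bE\left[\sup_{t \in [0,T]}\left|\int_0^t \big(a^{\epsi,1}_s - a^2_s\big)\,ds\right|^p\right] \leqslant T^{p-1}\int_{\Omega \times [0,T]}\big|a^{\epsi,1} - a^2\big|^p\,d(\bP \otimes \mathrm{Leb}).
\end{equation*}
It therefore suffices to show that $a^{\epsi,1} \to a^2$ in $L^p(\bP \otimes \mathrm{Leb}_{|[0,T]})$ as $\epsi \to 0$.

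To this end, I would verify the hypotheses of Vitali's convergence theorem on the finite measure space $(\Omega \times [0,T], \bP \otimes \mathrm{Leb}_{|[0,T]})$. For convergence in $(\bP \otimes \mathrm{Leb})$-measure, fix $\delta>0$ and note that by Fubini
\begin{equation*}
(\bP \otimes \mathrm{Leb})\big[\big|a^{\epsi,1} - a^2\big| > \delta\big] = \int_0^T \bP\big[\big|a^{\epsi,1}_t - a^2_t\big| > \delta\big]\,dt;
\end{equation*}
the integrand is bounded by $1$ and tends to $0$ for $\mathrm{Leb}$-a.e.\ $t\in[0,T]$ by hypothesis, so dominated convergence yields convergence in measure. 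For uniform integrability of the family $\{|a^{\epsi,1} - a^2|^p\}_{\epsi}$, I would use $|a^{\epsi,1} - a^2|^p \leqslant 2^p(|a^{\epsi,1}|^p + |a^2|^p)$, together with the fact that the sum (and dominated versions) of uniformly integrable families remains uniformly integrable: $\{|a^{\epsi,1}|^p\}_\epsi$ is uniformly integrable by assumption, and $\{|a^2|^p\}$ is a singleton in $L^1(\bP \otimes \mathrm{Leb})$ and hence uniformly integrable as well.

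Vitali's theorem then gives $\int_{\Omega \times [0,T]}|a^{\epsi,1} - a^2|^p\,d(\bP \otimes \mathrm{Leb}) \to 0$, which combined with the earlier estimate yields $\cS^p$-convergence. There is no genuine obstacle here; the only subtlety is making sure the pointwise-in-$t$ convergence in probability lifts to convergence in measure on the product space, which is handled cleanly by the Fubini identity above.
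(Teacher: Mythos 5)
Your proof is correct and follows essentially the same route as the paper's: both reduce to $L^p(\bP\otimes\mathrm{Leb}_{|[0,T]})$-convergence via Jensen's inequality, establish convergence in measure on the product space through a Fubini argument, and then invoke a Vitali-type theorem (the paper cites Kallenberg's Proposition 4.12, you argue uniform integrability directly). The only cosmetic difference is that the paper verifies convergence in measure via the $\bE[|\cdot|\wedge 1]$ characterization whereas you bound the measure of the exceedance set directly.
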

\begin{proof}
	First, we show that $a^{\epsi,1}$ converges to $a^2$ in measure under $\bP\otimes\text{Leb}_{|[0,T]}$. Indeed, for fixed $t \in [0, T]$, convergence in probability of $a^{\epsi,1}_t $ to $a^2_t$ is equivalent to
	\begin{equation*}
	\bE\left[\left|a^2_t-a^{\epsi,1}_t\right|\wedge 1\right]\to 0~~\mbox{as}~~\epsi\to 0.
	\end{equation*}
	Thus, by Fubini's theorem and dominated convergence, we obtain
	\begin{equation*}
	\bE\left[\int_{0}^{T}\left(\left|a^2_t-a^{\epsi,1}_t\right|\wedge 1\right) dt\right]\to 0~~\mbox{as}~~\epsi\to 0,
	\end{equation*}
	which is equivalent to convergence in measure under $\bP\otimes\text{Leb}_{|[0,T]}$ of $a^{\epsi,1}$ to $a^2$.
	
	Next \cite[Proposition 4.12]{kallenberg.02} implies that $a^{\epsi,1} \to a^2$ in $L^p(\bP\otimes\text{Leb}_{|[0,T]})$. The assertion in turn follows from Jensen's inequality via
	\begin{align*}
	\limsup_{\epsi \to 0}	\bE\left[\sup_{t\in[0,T]}\left|\int_{0}^{t}a^{\epsi,1}_sds-\int_{0}^{t}a^{2}_sds\right|^p\right]&\leqslant T^{p-1}\ \limsup_{\epsi \to 0} \bE\left[\int_{0}^{T}\left|a^{\epsi,1}_s-a^{2}_s\right|^p ds\right] = 0.
	\end{align*}
\end{proof}

\subsection{An Integrability Result}
The following result is used in the reduction to bounded coefficients in Section \ref{sec:reduction}.
\begin{lemma}
\label{lem:app:integrability estimate}
Suppose that Assumption \ref{ass.integrability} is satisfied. Then:
\begin{equation}
\label{eq:lem:app:integrability estimate}
		\bE\Bigg[\int_0^T\Bigg(H_s \frac{\int_{\bR} f\big(\frac{K_s}{M_s}y\big) \exp\big(-2 \frac{L_s}{M_s} G(y)\big)dy}{\int_{\bR}\exp\big(-2 \frac{L_s}{M_s} G(y)\big)dy}\Bigg)^{\!p(1+\eta)}ds\Bigg] < \infty.
		\end{equation}
\end{lemma}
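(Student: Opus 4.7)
The plan is to use the polynomial growth $|f(x)|\leq C_f(|x|^{q'}+1)$ from \eqref{eq:condition.f} to bound the ratio pointwise as
\[
|w_s|\leq C_f+C_f\Bigl(\frac{K_s}{M_s}\Bigr)^{\!q'}\!I(L_s/M_s),\qquad I(\alpha):=\frac{\int_{\bR}|y|^{q'}e^{-2\alpha G(y)}dy}{\int_{\bR}e^{-2\alpha G(y)}dy},
\]
and then to establish a polynomial bound on $I(\alpha)$ in $\alpha$ and $\alpha^{-1}$; the remaining integral is controlled by H\"older's inequality against the moments listed in Assumption~\ref{ass.integrability}. The constant part contributes $C_f^{p(1+\eta)}\bE\int H_s^{p(1+\eta)}ds$, finite by Assumption~\ref{ass.integrability}.

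To bound $I(\alpha)$ I would use both directions of the information on $G$. From \eqref{eq:growth.g} and continuity of $G$ we obtain $\hat c,R>0$ with $G(y)\geq \hat c|y|^{q+1}$ for $|y|\geq R$; splitting the numerator at $|y|=R$ and rescaling $y=z\alpha^{-1/(q+1)}$ in the tail produces $\int|y|^{q'}e^{-2\alpha G(y)}dy\leq C(1+\alpha^{-(q'+1)/(q+1)})$. Since $g$ is locally Lipschitz with $g(0)=0$, we also have $G(y)\leq C_Gy^2$ for $|y|\le\delta$, and restricting the denominator to $|y|\leq\min(\delta,\alpha^{-1/2})$ yields $\int e^{-2\alpha G(y)}dy\geq c(1+\sqrt{\alpha})^{-1}$. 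Combining these gives $I(\alpha)\leq C(\alpha^{-(q'+1)/(q+1)}+\sqrt{\alpha})$, so with $\alpha=L_s/M_s$,
\[
(H_sw_s)^{p(1+\eta)}\leq C\,H_s^{p(1+\eta)}\Bigl(1+\Bigl(\frac{K_s}{M_s}\Bigr)^{p(1+\eta)q'}\!\Bigl[\Bigl(\frac{M_s}{L_s}\Bigr)^{p(1+\eta)(q'+1)/(q+1)}+\Bigl(\frac{L_s}{M_s}\Bigr)^{p(1+\eta)/2}\Bigr]\Bigr).
\]

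The key algebraic step is to extract a factor $(L\wedge M)^{-q'}$ from each of the two non-trivial terms. For the first, I write $(K/M)^{q'}(M/L)^{(q'+1)/(q+1)}=K^{q'}M^aL^{-b}$ with $a=(1-qq')/(q+1)$ and $b=(q'+1)/(q+1)$, and observe that $|a|+b=q'$ when $qq'\geq 1$ while substituting $M^a=(M/L)^aL^a$ gives $M^aL^{-b}=(M/L)^aL^{-q'}$ when $qq'<1$; in both cases one concludes $(K/M)^{q'}(M/L)^{(q'+1)/(q+1)}\leq K^{q'}(L\wedge M)^{-q'}(M/L)^{(1-qq')^+/(q+1)}$. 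For the second term an analogous direct expansion gives $(K/M)^{q'}(L/M)^{1/2}\leq K^{q'}(L\wedge M)^{-q'}(L/M)^{1/2}$. Applying H\"older's inequality with exponents $(2,4,4)$ then estimates each term by the product of $(\bE\int(HK^{q'})^{2p(1+\eta)}ds)^{1/2}$, $(\bE\int(L\wedge M)^{-4p(1+\eta)q'}ds)^{1/4}$, and a third factor: either $(\bE\int(M/L)^{4p(1+\eta)(1-qq')^+/(q+1)}ds)^{1/4}$, which is finite by Jensen from Assumption~\ref{ass.integrability} since $(1-qq')^+/(q+1)\leq 1$; or $(\bE\int(L/M)^{2p(1+\eta)}ds)^{1/4}$, controlled via the elementary inequality $x^k\leq C_ke^{\eta x}$ by the exponential moment $\bE\int\exp(\eta L_s/M_s)ds<\infty$.

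The principal technical difficulty is the lower bound on the denominator $\int e^{-2\alpha G(y)}dy$ for large $\alpha$: since no upper bound of the form $G(y)\leq C|y|^{q+1}$ near zero is assumed, the best one can prove from the local Lipschitz property of $g$ alone is decay of order $1/\sqrt{\alpha}$. This is what creates the unavoidable $\sqrt{\alpha}$ contribution to $I(\alpha)$ and is precisely what motivates including the exponential moment of $L/M$ in Assumption~\ref{ass.integrability}; a polynomial moment of $L/M$ alone would not suffice to absorb $\sqrt{L/M}$ multiplied by the other factors at the required order.
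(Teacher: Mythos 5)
Your proof is correct, but it takes a genuinely different (and slightly sharper) route than the paper's.

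On the estimate of $I(\alpha)$: the paper bounds the numerator using only a \emph{linear} lower bound $G(x)\geqslant Cx$ (which follows from \eqref{eq:growth.g} since $q\geqslant 1$), giving $\int_0^\infty y^{q'}e^{-2\alpha G(y)}dy\lesssim 1+\alpha^{-(q'+1)}$; you use the stronger polynomial bound $G(y)\geqslant \hat c|y|^{q+1}$, obtaining $1+\alpha^{-(q'+1)/(q+1)}$. For the denominator the paper makes no use of the local Lipschitzness of $g$: it simply picks $c>0$ with $G(c)\leqslant\frac{\eta}{4p(1+\eta)}$ (only continuity and $G(0)=0$ are needed) and obtains the crude lower bound $\geqslant c\,\exp(-\frac{L_s}{M_s}\frac{\eta}{4p(1+\eta)})$. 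Your bound $\gtrsim (1+\sqrt\alpha)^{-1}$, derived from $G(y)\leqslant C_Gy^2$ near zero, is much sharper. Consequently the paper's $I(\alpha)$-bound is $\lesssim (M/L)^{q'+1}+\exp(\frac{L}{M}\frac{\eta}{4p(1+\eta)})$, exponential in $L/M$, while yours is the polynomial $(M/L)^{(q'+1)/(q+1)}+\sqrt{L/M}$. Finally, you combine terms via H\"older with exponents $(2,4,4)$, whereas the paper uses the elementary inequality $abc\leqslant a^2+b^4+c^4$; both lead to exactly the moment conditions listed in Assumption~\ref{ass.integrability}. Your algebraic reduction to $(L\wedge M)^{-q'}$ is more involved than the paper's because you are carrying a sharper numerator bound, but it is correct.

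One inaccuracy in your closing commentary: you claim that ``a polynomial moment of $L/M$ alone would not suffice to absorb $\sqrt{L/M}$.'' This is wrong given your own estimate. After H\"older you need $\bE\int(L/M)^{2p(1+\eta)}ds<\infty$, which is a polynomial moment of fixed order; the exponential moment in Assumption~\ref{ass.integrability} is used here only because it is what is assumed (and implies every polynomial moment via $x^k\leqslant C_ke^{\eta x}$). It is the paper's cruder exponential lower bound on the denominator that genuinely \emph{requires} the exponential moment — after raising $\exp(\frac{L_s}{M_s}\frac{\eta}{p(1+\eta)})$ to the power $p(1+\eta)$ one gets precisely $\exp(\eta L_s/M_s)$. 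Your sharper argument shows the lemma would hold under a weaker polynomial moment of $L/M$; the exponential assumption is chosen for compatibility with the paper's more elementary estimation, not forced by your $\sqrt{\alpha}$ term.
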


\begin{proof}
We start by estimating the fraction appearing in \eqref{eq:lem:app:integrability estimate}.  By the growth condition \eqref{eq:condition.f} for $f$ and the fact that $G$ is even (because $g$ is odd), it follows that
\begin{equation}
\label{eq:pf:lem:app:integrability estimate:fraction}
\frac{\int_{\bR} f\left(\frac{K_s}{M_s}y\right) \exp\left(-2 \frac{L_s}{M_s} G(y)\right)dy}{\int_{\bR}\exp\left(-2 \frac{L_s}{M_s} G(y)\right)dy} \leqslant C_f + C_f \left(\frac{K_s}{M_s}\right)^{q^\prime} \frac{\int_0^\infty y^{q^\prime} \exp\left(-2 \frac{L_s}{M_s} G(y)\right)dy}{\int_0^\infty \exp\left(-2 \frac{L_s}{M_s} G(y)\right)dy}.
\end{equation}
We proceed to estimate the numerator and denominator in the fraction appearing in \eqref{eq:pf:lem:app:integrability estimate:fraction}. For the numerator, we use that by the growth condition \eqref{eq:growth.g} of $g$, there are $\tilde{x}$ and $C > 0$ such that
\begin{equation*}
G(x)\geqslant C x,\quad \forall x\geqslant \tilde{x}.
\end{equation*}
Using this, we obtain
\begin{align}
\int_0^\infty y^{q^\prime} \exp\left(-2 \frac{L_s}{M_s} G(y)\right)dy &\leqslant \frac{\tilde x^{q^\prime +1}}{1+ q^\prime} + \int_{\tilde x}^\infty y^{q^\prime} \exp\left(-2 \frac{L_s}{M_s} C y \right)dy \notag \\
& \leqslant \frac{\tilde x^{q^\prime +1}}{1+ q^\prime} + \int_{0}^\infty y^{q^\prime} \exp\left(-2 \frac{L_s}{M_s} C y \right)dy \notag \\
& \leqslant \frac{\tilde x^{q^\prime +1}}{1+ q^\prime} + \left(2 C \frac{L_s}{M_s}\right)^{-q^\prime -1} \int_{0}^\infty z^{q^\prime} \exp\left(-z \right)dy \notag \\
& \leqslant \frac{\tilde x^{q^\prime +1}}{1+ q^\prime} + \Gamma(q^\prime+1) (2 C)^{-q^\prime -1}  \left(\frac{M_s}{L_s}\right)^{q^\prime +1} \notag \\
&\leqslant \widetilde C \left(1 +  \left(\frac{M_s}{L_s}\right)^{q^\prime +1}\right),
\label{eq:pf:lem:app:integrability estimate:numerator}
\end{align}
for some constant $\widetilde C > 0$. For the denominator in the fraction appearing in \eqref{eq:pf:lem:app:integrability estimate:fraction}, we use that $G$ is continuous and nondecreasing on $\bR_+$ with $G(0) = 0$ and $\lim_{x\to\infty}G(x)=\infty$. Thus, there is $c > 0$ such that $G(c) \leqslant \frac{\eta}{4 p (1 + \eta)}$. This gives
\begin{align}
\int_{0}^\infty \exp\left(-2 \frac{L_s}{M_s} G(y)\right)dy &\geqslant \int_{0}^{c}\exp\left(-2 \frac{L_s}{M_s} G(y)\right)dy \geqslant \int_{0}^{c}\exp\left(-2 \frac{L_s}{M_s} G(c)\right)dy \notag \\
&= c\exp\left(-\frac{L_s}{M_s} G(c) \right) \geqslant c\exp\left(-\frac{L_s}{M_s} \frac{\eta}{4 p (1 + \eta)} \right).
\label{eq:pf:lem:app:integrability estimate:denominator}
\end{align}
It is an elementary exercise in analysis to show that there is a constant $A > 0$ such that 
\begin{equation*}
\left(1 + x^{-q^\prime - 1}\right) \exp\left(\frac{\eta}{4 p (1 + \eta)}x \right) \leqslant A\left(x^{-q^\prime - 1} + \exp\left(\frac{\eta}{4 p (1 + \eta)}x \right)\right), \quad  x > 0.
\end{equation*}
Together with \eqref{eq:pf:lem:app:integrability estimate:numerator} and \eqref{eq:pf:lem:app:integrability estimate:denominator}, it follows that there is a constant $\bar C \geqslant 1$ such that 
\begin{equation}
\label{eq:pf:lem:app:integrability estimate:fraction 2}
\frac{\int_0^\infty y^{q^\prime} \exp\left(-2 \frac{L_s}{M_s} G(y)\right)dy}{\int_0^\infty \exp\left(-2 \frac{L_s}{M_s} G(y)\right)dy} \leqslant \bar C \left(\left(\frac{M_s}{L_s}\right)^{q^\prime +1} +\exp\left(\frac{L_s}{M_s} \frac{\eta}{4p (1 + \eta)}\right)\right).
\end{equation}
Now putting together \eqref{eq:pf:lem:app:integrability estimate:fraction} and \eqref{eq:pf:lem:app:integrability estimate:fraction 2}, and using the elementary inequality $a b c \leqslant a^2 + b^4 + c^4$ for $a, b, c > 0$, we obtain
\begin{align}
&H_s \frac{\int_{\bR} f\left(\frac{K_s}{M_s}y\right) \exp\left(-2 \frac{L_s}{M_s} G(y)\right)dy}{\int_{\bR}\exp\left(2 \frac{L_s}{M_s} G(y)\right)dy} \notag \\
&\qquad \leqslant C_f \bar C\left(H_s  + H_s K_s^{q^\prime} \left(\frac{1}{L_s}\right)^{q^\prime} \frac{M_s}{L_s}  + H_s K_s^{q^\prime} \left(\frac{1}{M_s}\right)^{q^\prime}\exp\left(\frac{L_s}{M_s} \frac{\eta}{4 p (1 + \eta)}\right)\right) \notag \\
&\qquad \leqslant C_f \bar C\left(H_s  + H_s K_s^{q^\prime} \left(\frac{1}{L_s \wedge M_s}\right)^{q^\prime} \frac{M_s}{L_s}  + H_s K_s^{q^\prime} \left(\frac{1}{L_s \wedge M_s}\right)^{q^\prime}\exp\left(\frac{L_s}{M_s} \frac{\eta}{4 p (1 + \eta)}\right)\right) \notag \\
&\qquad \leqslant C_f \bar C \left( H_s + 2 \left(H_s K_s^{q^\prime}\right)^2 +  2\left(\frac{1}{L_s \wedge M_s}\right)^{4 q^\prime} + \left(\frac{M_s}{L_s}  \right)^4 + \exp\left(\frac{L_s}{M_s} \frac{\eta}{p (1 + \eta)}\right)\right) 
\label{eq:pf:lem:app:integrability estimate:as}
\end{align}
The claimed estimate~\eqref{eq:lem:app:integrability estimate} in turn follows from \eqref{eq:pf:lem:app:integrability estimate:as}, Assumption \ref{ass.integrability} and the elementary inequality $(a + b + c+d)^{p (1 + \eta)} \leqslant 4^{p (1 + \eta)}  \left(a^{p (1 + \eta)}  + b^{p (1 + \eta)} + c^{p (1 + \eta)}+d^{p (1 + \eta)}\right)$ for $a, b, c, d \geqslant 0$.
\end{proof}
\bibliographystyle{abbrv}
\bibliography{chm}

\begin{thebibliography}{10}

\bibitem{ahrens.15}
L.~Ahrens.
\newblock {\em On using shadow prices for the asymptotic analysis of portfolio
  optimization under proportional transaction costs}.
\newblock PhD thesis, {Christan-Albrechts-Universit\"at zu Kiel}, 2015.

\bibitem{almgren.03}
R.~F. Almgren.
\newblock Optimal execution with nonlinear impact functions and
  trading-enhanced risk.
\newblock {\em Appl. Math. Finance}, 10(1):1--18, 2003.

\bibitem{almgren.chriss.01}
R.~F. Almgren and N.~Chriss.
\newblock Optimal execution of portfolio transactions.
\newblock {\em J. Risk}, 3:5--40, 2001.

\bibitem{almgren.li.16}
R.~F. Almgren and T.~M. Li.
\newblock Option hedging with smooth market impact.
\newblock {\em Market Microstucture Liq.}, 2(1), 2016.

\bibitem{almgren.al.05}
R.~F. Almgren, C.~Thum, E.~Hauptmann, and H.~Li.
\newblock Direct estimation of equity market impact.
\newblock {\em RISK}, July, 2005.

\bibitem{bank.soner.voss.16}
P.~Bank, H.~Soner, and M.~Voss.
\newblock Hedging with temporary price impact.
\newblock {\em Math. Fin. Econ.}, 11(2):215--239, 2017.

\bibitem{borodin.salminen.02}
A.~N. Borodin and P.~Salminen.
\newblock {\em Handbook of {B}rownian motion---facts and formulae}.
\newblock Birkh\"auser Verlag, Basel, second edition, 2002.

\bibitem{cai.fukasawa.16}
J.~Cai and M.~Fukasawa.
\newblock Asymptotic replication with modified volatility under small
  transaction costs.
\newblock {\em Finance Stoch.}, 20(2):381--431, 2016.

\bibitem{cai.al.15}
J.~Cai, M.~Rosenbaum, and P.~Tankov.
\newblock Asymptotic lower bounds for optimal tracking: a linear programming
  approach.
\newblock {\em Ann. Appl. Probab.}, 27(4):2455--2514, 2017.

\bibitem{cai.al.16}
J.~Cai, M.~Rosenbaum, and P.~Tankov.
\newblock Asymptotic optimal tracking: feedback strategies.
\newblock {\em Stochastics}, 89(6--7):943--966, 2017.

\bibitem{caye.al.17}
T.~Cay\'e, M.~Herdegen, and J.~Muhle-Karbe.
\newblock Trading with small nonlinear price impact.
\newblock Preprint, 2017.

\bibitem{davis.norman.90}
M.~H.~A. Davis and A.~R. Norman.
\newblock Portfolio selection with transaction costs.
\newblock {\em Math. Oper. Res.}, 15(4):676--713, 1990.

\bibitem{ethier.kurtz.86}
S.~N. Ethier and T.~G. Kurtz.
\newblock {\em Markov processes}.
\newblock John Wiley \& Sons, Inc., New York, 1986.

\bibitem{feodoria.16}
M.~R. Feodoria.
\newblock {\em Optimal investment and utility indifference pricing in the
  presence of small fixed transaction costs}.
\newblock PhD thesis, Christian-Albrechts-Universit\"at zu Kiel, 2016.

\bibitem{garleanu.pedersen.13}
N.~Garleanu and L.~H. Pedersen.
\newblock Dynamic trading with predictable returns and transaction costs.
\newblock {\em J. Finance}, 68(6):2309--2340, 2013.

\bibitem{garleanu.pedersen.16}
N.~Garleanu and L.~H. Pedersen.
\newblock Dynamic portfolio choice with frictions.
\newblock {\em J. Econ. Theory}, 165:487--516, 2016.

\bibitem{guasoni.weber.15a}
P.~Guasoni and M.~Weber.
\newblock Dynamic trading volume.
\newblock {\em Math. Finance}, 27(2):313--349, 2017.

\bibitem{guasoni.weber.15b}
P.~Guasoni and M.~Weber.
\newblock Nonlinear price impact and portfolio choice.
\newblock Preprint, 2018.

\bibitem{herdegen.muhlekarbe.17}
M.~Herdegen and J.~Muhle-Karbe.
\newblock Stability of {R}adner equilibria with respect to small frictions.
\newblock {\em Finance Stoch.}, 22(2):443--502, 2018.

\bibitem{janecek.shreve.10}
K.~Jane{\v{c}}ek and S.~E. Shreve.
\newblock Futures trading with transaction costs.
\newblock {\em Illinois J. Math.}, 54(4):1239--1284, 2010.

\bibitem{kallenberg.02}
O.~Kallenberg.
\newblock {\em Foundations of modern probability.}
\newblock Springer, Berlin, second edition, 2002.

\bibitem{karatzas.shreve.91}
I.~Karatzas and S.~E. Shreve.
\newblock {\em Brownian motion and stochastic calculus}.
\newblock Springer, New York, second edition, 1991.

\bibitem{khasminskii.66}
R.~Z. Khas{'}minskii.
\newblock A limit theorem for the solutions of differential equations with
  random right-hand sides.
\newblock {\em Theory Probab. Appl.}, 11(3):390--406, 1966.

\bibitem{lillo.al.03}
F.~Lillo, J.~D. Farmer, and R.~N. Mantegna.
\newblock Master curve for price-impact function.
\newblock {\em Nature}, 421:129--130, 2003.

\bibitem{moreau.al.15}
L.~Moreau, J.~Muhle-Karbe, and H.~M. Soner.
\newblock Trading with small price impact.
\newblock {\em Math. Finance}, 27(2):350--400, 2017.

\bibitem{papanicolaou.al.77}
G.~C. Papanicolaou, D.~W. Stroock, and S.~R.~S. Varadhan.
\newblock Martingale approach to some limit theorems.
\newblock In D.~Ruelle, editor, {\em Conference on Statistical Mechanics,
  Dynamical Systems and Turbulence}. Duke Univ. Press, 1977.

\bibitem{pardoux.veretennikov.01}
E.~Pardoux and A.~Y. Veretennikov.
\newblock On poisson equation and diffusion approximation 1.
\newblock {\em Ann. Prob.}, 29(3):1061--1085, 2001.

\bibitem{pardoux.veretennikov.03}
E.~Pardoux and A.~Y. Veretennikov.
\newblock On poisson equation and diffusion approximation 2.
\newblock {\em Ann. Prob.}, 31(3):1166--1192, 2003.

\bibitem{peskir.01}
G.~Peskir.
\newblock Bounding the maximal height of a diffusion by the time elapsed.
\newblock {\em J. Theoret. Probab.}, 14(3), 2015.

\bibitem{protter.05}
P.~E. Protter.
\newblock {\em Stochastic integration and differential equations}.
\newblock Springer, Berlin, second edition, 2005.

\bibitem{revuz.yor.99}
D.~Revuz and M.~Yor.
\newblock {\em Continuous martingale and Brownian motion.}
\newblock Springer, Berlin, third edition, 1999.

\bibitem{shreve.04}
S.~E. Shreve.
\newblock {\em Stochastic calculus for finance II: continuous-time models}.
\newblock Springer, New York, 2004.

\bibitem{skorokhod.89}
A.~V. Skorokhod.
\newblock {\em Asymptotic methods in the theory of stochastic differential
  equations}.
\newblock American Mathematical Society, Providence, RI, 1989.

\bibitem{stratonovich.63}
R.~L. Stratonovich.
\newblock {\em Topics in the theory of random noise. {V}ol. {I}}.
\newblock Gordon and Breach, New York, 1963.

\bibitem{stratonovich.67}
R.~L. Stratonovich.
\newblock {\em Topics in the theory of random noise. {V}ol. {II}}.
\newblock Gordon and Breach, New York, 1967.

\bibitem{veretennikov.97}
A.~Y. Veretennikov.
\newblock On polynomial mixing bounds for stochastic differential equations.
\newblock {\em Stoch. Proc. and Appl.}, 70(1):115--128, 1997.

\end{thebibliography}
\end{document}